\definecolor{cellGreen}{HTML}{A8F4A8}    
\definecolor{cellRed}{HTML}{FF9999}      
\definecolor{cellBlue}{HTML}{99CCFF}     
\definecolor{cellLightPurple}{HTML}{E3CFF2} 
\definecolor{cellLightgreen}{HTML}{C8F0C8}  
\definecolor{cellYellowRemaining}{HTML}{FFFEB3} 
\definecolor{cellBBlue}{HTML}{AEE6F6}     
\definecolor{cellReddishPink}{HTML}{F8BBD6} 
\theoremstyle{definition}
\newtheorem{Defn}{Definition}[section]
\theoremstyle{plain}
\newtheorem{Lemma}[Defn]{Lemma}
\newtheorem{prop}[Defn]{Proposition}
\newtheorem{theorem}[Defn]{Theorem}
\newtheorem{Corollary}[Defn]{Corollary}
\theoremstyle{plain}
\newtheorem{THM}{Theorem}
\newtheorem{COR}[THM]{Corollary}
\theoremstyle{remark}
\newtheorem{remark}[Defn]{Remark}
\newtheorem{Example}[Defn]{Example}
\newtheorem{notation}[Defn]{Notation}
\DeclareMathOperator{\nat}{\mathbb{N}}
\DeclareMathOperator{\prim}{\mathbb{P}}
\DeclareMathOperator{\AC}{C^{\bullet}_{d}}
\DeclareMathOperator{\ACk}{C_d}
\DeclareMathOperator{\ACPN}{D^{\bullet}_{d,p^N}}
\DeclareMathOperator{\ACPNk}{D_{d,p^N}}%
\DeclareMathOperator{\ACPNQ}{D^{\bullet}_{d^\prime,p^N}}
\DeclareMathOperator{\ACPNQk}{D_{d^\prime,p^N}}
\DeclareMathOperator{\gr}{\mathsf{gr}}
\DeclareMathOperator{\f}{\mathcal{F}}
\title{On integral extensions between the abelianization functor and its symmetric powers}
\author{Minkyu Kim}
\address{KIAS, Seoul, South Korea}
\email{kimminq@kias.re.kr}
\urladdr{https://minq92.github.io/Gaeul-Autumn/}
\author{Christine Vespa}
\address{Aix Marseille Univ, CNRS, I2M, Marseille, France.}
\email{christine.vespa@univ-amu.fr}
\urladdr{https://www.i2m.univ-amu.fr/perso/christine.vespa/}
\begin{document}



\begin{abstract}
    This paper aims to study Ext-groups between certain functors defined on the category of finitely generated free groups.
    Rational Ext-groups between the abelianization functor and its symmetric powers are known, and are almost always equal to zero. 
    Recently, using homotopical methods, Arone constructed an explicit bounded complex whose homology corresponds to the integral Ext-groups between the abelianization functor and its symmetric powers. The homology of this complex is far from being trivial.
    Using this complex, we explicitly calculate some of these Ext-groups. More precisely, we compute $ \mathrm{Ext}^{1}, \mathrm{Ext}^{2},  \mathrm{Ext}^{d-1}$ and  $\mathrm{Ext}^{d-2}$ between the abelianization functor and its $d$th symmetric power.  We further explain how Arone's complex can be obtained from an explicit projective resolution of the abelianization functor.
 We compare our results with the computation of Ext-groups between functors from finitely generated free {\it abelian} groups, obtained by Franjou and Pirashvili. In particular, we obtain that the composition with the abelianization functor induces an isomorphism for the $\mathrm{Ext}^{1}$ considered in this paper.
  
\end{abstract}

\maketitle

Functors on the category of free groups arise  in various contexts, including, stable homology of automorphism groups of free groups (see \cite{DV2015, D2019, KawazumiVespa}), higher Hochschild homology (see \cite{PV}), compactly supported rational cohomology of configuration spaces of
points on wedges of spheres (see \cite{GH}) and the study of Jacobi diagrams in handlebodies (see \cite{KatadaI, KatadaII, V2025}).
Calculating Ext-groups between functors on free groups is an interesting problem motivated in part by the computation of the stable homology of automorphism groups of free groups.

Let $\mathsf{gr}$ be the category of finitely-generated free groups, $\f(\gr; \mathds{Z})$ be the category of functors from $\gr$ to the category of abelian groups, $\mathfrak{a} \in \f(\gr; \mathds{Z})$ be the abelianization functor and $T^c$ be the $c$th tensor power functor for abelian groups. In \cite{vespa2018extensions}, the second author computes $ \mathrm{Ext}^{*}_{\mathcal{F}(\mathsf{gr};\mathds{Z})} ( T^c \circ \mathfrak{a}, T^{d} \circ \mathfrak{a} )$ and deduces, from this result, several computations of rational Ext-groups between functors such as $S^d \circ \mathfrak{a}$ and $\Lambda^d \circ \mathfrak{a}$, where $S^d$ is the $d$th symmetric power and $\Lambda^d$ the $d$th exterior power.  In particular, she obtains that 
\begin{align} \label{202508111534}
   \mathrm{Ext}^{i}_{\mathcal{F}(\mathsf{gr};\mathds{Z})} (\mathfrak{a}, S^{d} \circ \mathfrak{a} )\otimes \mathds{Q}= \begin{cases}
            \mathds{Q} & \mathrm{~if~} d=1 \mathrm{~and~} i=0
             , \\
            0  & \mathrm{otherwise}.
        \end{cases} 
\end{align}

In \cite{arone2025polynomial}, Arone uses homotopical methods in order to compute integral Ext-groups between functors on $\gr$. In particular, he gives an explicit bounded complex whose homology corresponds to $ \mathrm{Ext}^{*}_{\mathcal{F}(\mathsf{gr};\mathds{Z})} (\mathfrak{a}, S^{d} \circ \mathfrak{a} )$. The following table, given in \cite{arone2025polynomial}, presents some examples of $\mathrm{Ext}^{i}_{\mathcal{F}(\mathsf{gr};\mathds{Z})}(\mathfrak{a}, S^{d}\circ \mathfrak{a})$, obtained in part with the help of a computer. 

\begin{table}[h]
\[\scalebox{0.88}{
\begin{tabular}{|c|c|c|c|c|c|c|c|c|c|c|c|}
\hline
$d ~\backslash~ i$ & 0 & 1 & 2 & 3 & 4 & 5 & 6 & 7 & 8 &$\cdots$ \\
\hline
1 & $\mathds{Z}$ & 0 & 0 & 0 & 0 & 0 & 0 & 0&0& $\cdots$ \\
\hline
2 & 0 & \cellcolor{cellGreen}$\mathds{Z}/2$ & 0 & 0 & 0 & 0 & 0  & 0&0&$\cdots$\\
\hline
3 & 0 & \cellcolor{cellLightgreen}$\mathds{Z}/3$ & \cellcolor{cellLightPurple}$\mathds{Z}/2$ & 0 & 0 & 0 & 0 &0 & 0 & $\cdots$ \\
\hline
4 & 0 & \cellcolor{cellYellowRemaining}$\mathds{Z}/2$ & \cellcolor{cellReddishPink}$\mathds{Z}/3$ & \cellcolor{cellBlue}$\mathds{Z}/2$ & 0 & 0 & 0 & 0 & 0 &$\cdots$ \\ 
\hline
5 & 0 & \cellcolor{cellYellowRemaining}$\mathds{Z}/5$ & \cellcolor{cellRed}$\mathds{Z}/2$ & \cellcolor{cellBBlue}0 & \cellcolor{cellBlue}$\mathds{Z}/2$ & 0 & 0 & 0 & 0 &$\cdots$ \\
\hline
6 & 0 & \cellcolor{cellYellowRemaining}0 & \cellcolor{cellRed}$\mathds{Z}/10$ & $\mathds{Z}/6$ & \cellcolor{cellBBlue}0 & \cellcolor{cellBlue}$\mathds{Z}/2$ & 0  & 0 & 0 &$\cdots$\\
\hline
7 & 0 & \cellcolor{cellYellowRemaining}$\mathds{Z}/7$ & \cellcolor{cellRed}0 & $\mathds{Z}/2$ & $\mathds{Z}/6$ & \cellcolor{cellBBlue}0 & \cellcolor{cellBlue}$\mathds{Z}/2$  & 0 & 0 &$\cdots$ \\
\hline
8 & 0 & \cellcolor{cellYellowRemaining}$\mathds{Z}/2$ & \cellcolor{cellRed}$\mathds{Z}/7$ & $\mathds{Z}/2$ & $\mathds{Z}/2$ & $\mathds{Z}/2$ & \cellcolor{cellBBlue}0 & \cellcolor{cellBlue}$\mathds{Z}/2$& 0 & $\cdots$ \\
\hline
9 & 0 & \cellcolor{cellYellowRemaining}$\mathds{Z}/3$ & \cellcolor{cellRed}$\mathds{Z}/2$ & 0 & $\mathds{Z}/2$ & $\mathds{Z}/6$ & $\mathds{Z}/2$ & 0\cellcolor{cellBBlue}& $\mathds{Z}/2$\cellcolor{cellBlue}& $\cdots$ \\
\hline
$\vdots$ & $\vdots$ & \cellcolor{cellYellowRemaining}$\vdots$ & \cellcolor{cellRed}$\vdots$ & $\vdots$ & $\vdots$ & $\vdots$ & $\vdots$  & $\vdots$ &\cellcolor{cellBBlue}$\ddots$& \cellcolor{cellBlue}$\ddots$ \\
\hline
\end{tabular}
}
\]
\caption { $\mathrm{Ext}^{i}_{\mathcal{F}(\mathsf{gr};\mathds{Z})} ( \mathfrak{a}, S^{d} \circ \mathfrak{a} )$} \label{20250801529}
\end{table}

Comparing this table with (\ref{202508111534}) shows that the integral $ \mathrm{Ext}$-groups between $\mathfrak{a}$ and  $S^{d}\circ \mathfrak{a}$ are far more intricate than their rational counterparts. The situation over $\mathds{Z}$ retains additional torsion phenomena making the integral case significantly more complex.

One of the primary motivation of this work was to fully understand the pattern of this table.

In this paper, we give a complete computation of the Ext-groups corresponding to the colored cells in the table. More precisely, we obtain the following results:

\begin{THM}[Theorem \ref{202507171119}] \label{202508081422}
    For $d$ a positive integer, we have:
    \begin{align*}
  \mathrm{Ext}^{1}_{\mathcal{F}(\mathsf{gr};\mathds{Z})} ( \mathfrak{a}, S^{d} \circ \mathfrak{a} )= 
        \begin{cases}
            \mathds{Z}/ p & \mathrm{~if~} d = p^l \mathrm{~for~} p \mathrm{~a~prime~and~}  l \in \nat^*
             , \\
            0  & \mathrm{otherwise}.
        \end{cases}
    \end{align*}
  \end{THM}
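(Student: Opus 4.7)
The plan is to compute $\mathrm{Ext}^{1}_{\f(\gr;\mathds{Z})}(\mathfrak{a}, S^{d} \circ \mathfrak{a})$ as the degree one cohomology of Arone's bounded complex, building on the explicit projective resolution of $\mathfrak{a}$ mentioned in the introduction. By the rational vanishing (\ref{202508111534}), every entry in Table~\ref{20250801529} is torsion, so it suffices to analyze the $p$-primary part for one prime $p$ at a time.

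First, I would make completely explicit the terms and differentials of Arone's complex in cohomological degrees $0$, $1$, and $2$. After applying $\mathrm{Hom}(-, S^{d}\circ \mathfrak{a})$, these pieces should be describable combinatorially, indexed by ordered decompositions of $d$ into two or three parts, with differentials governed by multinomial coefficients $\binom{d}{i_{1},\ldots,i_{k}}$. By Kummer's theorem, the $p$-adic valuation of such a coefficient equals $(s_{p}(i_{1})+\cdots+s_{p}(i_{k})-s_{p}(d))/(p-1)$, where $s_{p}$ denotes the digit sum in base $p$; a systematic positive valuation for all nontrivial splittings of $d$ occurs precisely when $d$ is a pure power of $p$, which already suggests the dichotomy in the statement.

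In the prime-power case $d=p^{l}$ I would then exhibit the explicit cocycle arising from the Frobenius (equivalently, from the divided power $\gamma_{p^{l}}$), verify that it is not a coboundary, and show that its class generates $\mathds{Z}/p$. Conversely, when $d$ is not a prime power, for each prime $p\mid d$ there is a splitting $d=d_{1}+d_{2}$ with $\binom{d}{d_{1}}$ a unit modulo $p$, and I would use this to rewrite any degree one cocycle as a coboundary of a suitable element adapted to the splitting. The main obstacle lies in the case $l\ge 2$: Arone's complex has many additional cells indexed by refined partitions of $p^{l}$, so one has to perform careful $p$-adic bookkeeping (via Lucas-type identities on the base-$p$ digits of $p^{l}$) to verify that every kernel element other than the Frobenius class is already a boundary, and that the Frobenius class has order exactly $p$ rather than some higher power of $p$.
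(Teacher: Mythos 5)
Your plan is sound and would yield the theorem, but it executes the computation differently from the paper, so let me compare. Both arguments live on the same complex $\AC$ and both reduce to the same system of cocycle equations $\binom{k}{r}a_k=\binom{d-r}{d-k}a_r$ (Lemma \ref{202507101541}), with coboundaries the multiples of $\bigl(\binom{d}{1},\dots,\binom{d}{d-1}\bigr)$, and both ultimately invoke Kummer's theorem. The paper, however, solves the system \emph{integrally} in one stroke: taking $r=1$ gives $da_k=\binom{d}{k}a_1$, a B\'ezout relation for $g=\gcd\{\binom{d}{k}\}$ then shows $Z^1(\AC)$ is generated by $\frac{1}{g}\bigl(\binom{d}{1},\dots,\binom{d}{d-1}\bigr)$, so $H^1(\AC)\cong\mathds{Z}/g$, and Corollary \ref{202506301226} identifies $g$; no finiteness or rational-vanishing input is needed, and no case analysis over primes. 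Your route instead uses finiteness (which requires the rational vanishing plus finite generation of the terms, not just the table) to split into $p$-primary parts and then argues $p$-locally: your ``unit binomial splitting'' $d=d_1+d_2$ with $p\nmid\binom{d}{d_1}$ does make every $p$-local cocycle a multiple of the coboundary generator, and your Frobenius cocycle $\frac{1}{p}\bigl((e_1+e_2)^{p^l}-e_1^{p^l}-e_2^{p^l}\bigr)$ is exactly the paper's generator $\frac{1}{p}\bigl(\binom{p^l}{1},\dots,\binom{p^l}{p^l-1}\bigr)$, with $p\cdot(\text{this class})$ the coboundary generator, so its order is exactly $p$ once you know it exhausts the $p$-local cocycles (for that, the relations with $r=1$ at $k=p^{l-1}$, where $v_p\binom{p^l}{p^{l-1}}=1$, suffice). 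Two small points you should add to make the plan complete: you must also kill the $q$-primary parts for primes $q$ \emph{not} dividing $d$ (and for $q\neq p$ when $d=p^l$), which your splitting argument handles trivially via $d=1+(d-1)$ since $\binom{d}{1}=d$ is then a $q$-unit; and note that a degree-one coboundary can only be an integer multiple of $\delta^0\langle\,\rangle$, so ``a suitable element adapted to the splitting'' is just a scalar. What the paper's approach buys is a uniform answer $H^1\cong\mathds{Z}/g$ with no localization; what yours buys is that the same $p$-local machinery is what the paper itself needs later (Sections \ref{202507041421}--\ref{202507041422}) for $H^2$, so your bookkeeping is not wasted, but for $\mathrm{Ext}^1$ it is the longer road.
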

\begin{THM}[Theorem \ref{202507041404}] \label{202508081440}
For $d$ a positive integer, we have:
    \begin{align*}
    \mathrm{Ext}^{2}_{\mathcal{F}(\mathsf{gr};\mathds{Z})} ( \mathfrak{a}, S^{d} \circ \mathfrak{a})= \bigoplus_{J_d} \mathds{Z}/p
    \end{align*}
    where $J_d =\{p \in \prim \mid \exists n \in \nat \exists m \in \nat^*
 , d=p^n (p^m+1) \}$.
 \end{THM}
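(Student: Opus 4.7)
The plan is to mirror the strategy used for the $\mathrm{Ext}^{1}$ computation of Theorem \ref{202508081422}. By (\ref{202508111534}) the groups $\mathrm{Ext}^{i}_{\mathcal{F}(\gr; \mathds{Z})}(\mathfrak{a}, S^{d} \circ \mathfrak{a})$ are torsion for $i \geq 1$, so in particular $\mathrm{Ext}^{2}$ decomposes as the direct sum of its $p$-primary components as $p$ ranges over primes. My first step is therefore to fix a prime $p$ and work with the $p$-local complex $\ACPN$ refining Arone's bounded complex $\AC$, for $N$ chosen large enough (e.g.\ $p^{N} > d$) that the $p$-primary part of $\mathrm{Ext}^{2}$ is detected by $H^{2}(\ACPN)$.

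Next I would identify an explicit basis for the degree-$1$ and degree-$2$ parts of $\ACPN$, indexed by partitions of $d$ into $p$-power parts in the spirit of Arone's construction, and write down the matrix entries of $d^{1}$ and $d^{2}$ in terms of multinomial coefficients modulo $p^{N}$. The key claim to establish is that a non-trivial class in $H^{2}(\ACPN)$ exists precisely when $d$ admits a (necessarily unique) representation $d = p^{n}(p^{m}+1)$ with $n \geq 0$ and $m \geq 1$, in which case the corresponding subspace of $2$-cocycles modulo $2$-coboundaries is exactly a copy of $\mathds{F}_{p}$. Uniqueness of the decomposition is elementary: since $\gcd(p, p^{m}+1) = 1$, the exponent $n$ equals the $p$-adic valuation of $d$ and $m$ is then forced by $d/p^{n} - 1 = p^{m}$.

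The candidate $2$-cocycle attached to a decomposition $d = p^{n}(p^{m}+1)$ should be built from basis elements supported on partitions of $d$ involving $p^{n}$ and $p^{n+m}$ as parts, the point being that the multinomial coefficients controlling the $1 \to 2$ differential are divisible by exactly the right power of $p$ precisely at such decompositions. Conversely, every other candidate $2$-cocycle should be exhibited as a $d^{1}$-image, which would complete the identification $H^{2}(\ACPN) \cong \mathds{F}_{p}$ or $0$ according to whether $p \in J_{d}$ or not; summing over $p$ then yields the stated formula.

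The main obstacle, compared with the $\mathrm{Ext}^{1}$ calculation, is the larger combinatorial complexity in degree $2$: one must simultaneously handle the cocycle condition against $d^{2}$ and the quotient by $\mathrm{im}(d^{1})$, which requires a fine-grained analysis of the $p$-adic valuations of multinomial coefficients via Kummer's theorem. The arithmetic condition $d = p^{n}(p^{m}+1)$ should emerge here as the precise base-$p$ carrying pattern that leaves exactly one family of multinomial coefficients as a $p$-adic unit while forcing the competing ones to be divisible by $p$; organizing the bookkeeping so that this characterization falls out cleanly is where the bulk of the work lies.
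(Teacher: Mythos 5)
Your strategy starts where the paper does (fix a prime, pass to a mod-$p^{N}$ refinement of Arone's complex), but the step in which you read off the $p$-primary part of $\mathrm{Ext}^{2}$ from $H^{2}(\ACPN)$ has a genuine gap. Since $\varphi^{\bullet}$ identifies $\ACPN$ with $\AC\otimes\mathds{Z}/p^{N}$, the universal coefficient theorem gives $H^{2}(\ACPN)\cong\bigl(H^{2}(\AC)\otimes\mathds{Z}/p^{N}\bigr)\oplus\mathrm{Tor}\bigl(H^{3}(\AC),\mathds{Z}/p^{N}\bigr)$, so the group you propose to compute also contains the $p$-torsion of $\mathrm{Ext}^{3}_{\mathcal{F}(\mathsf{gr};\mathds{Z})}(\mathfrak{a},S^{d}\circ\mathfrak{a})$. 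Your ``key claim'' --- that $H^{2}(\ACPN)$ is $\mathds{Z}/p$ or $0$ according to whether $d=p^{n}(p^{m}+1)$ --- is therefore false as stated: for $d=4$, $p=2$ one has $2\notin J_{4}$, yet $H^{3}(\AC)\cong\mathds{Z}/2$ forces $H^{2}(\AC\otimes\mathds{Z}/2^{N})\cong\mathds{Z}/2\neq 0$; for $d=6$, $p=2$ one gets $(\mathds{Z}/2)^{\oplus 2}$ rather than $\mathds{Z}/2$. So even after the cocycle/coboundary analysis you defer, you would still need a mechanism to separate the $H^{2}$-contribution from the $H^{3}$-contribution (and to justify that your chosen $N$ bounds the exponent of the $p$-part), and the proposal contains none. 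The paper sidesteps this entirely by working one degree lower: it uses the sequence $0\to H^{1}(\AC)\otimes\mathds{Z}/p^{N}\to H^{1}(\AC\otimes\mathds{Z}/p^{N})\to\mathrm{Tor}(H^{2}(\AC),\mathds{Z}/p^{N})\to 0$, in which $H^{1}(\AC)$ is already known from the $\mathrm{Ext}^{1}$ computation and $H^{2}(\AC)$ is finite, so the $p^{N}$-torsion of $\mathrm{Ext}^{2}$ is obtained from computations carried out purely in cohomological degree $1$, for every $N$.

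A second, lesser point: your proposal defers exactly the part that constitutes the proof, namely the determination of cocycles modulo coboundaries. In the paper this is done in degree $1$ of the auxiliary complex $\ACPN$ (whose differentials carry the $p$-factors of the binomial coefficients), by an explicit analysis when $p\nmid d$ (Kummer's theorem and the invariant $\theta_{p}(d)$) followed by induction on $v_{p}(d)$ via the injective comparison map $\bar{\phi_{d}}:H^{1}(\ACPN)\to H^{1}(\ACPNQ)$ with $d'=d/p$; no comparably detailed mechanism is indicated in your sketch for the much harder degree-$2$ analysis. Your guess for the generator (supported on the parts $p^{n}$ and $p^{n+m}$) does match the explicit $2$-cocycle produced in the paper, but as written the argument is a plan whose central claim must first be corrected (to account for $\mathrm{Ext}^{3}$) before it could be established.
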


\begin{THM}[Theorem \ref{202507311555K}, Theorem \ref{202507111139}] \label{202508141030}
    Let $d$ be an integer such that $d \geq 2$. We have:
    \begin{align*}
        \mathrm{Ext}^{d-1}_{\mathcal{F}(\mathsf{gr};\mathds{Z})} ( \mathfrak{a}, S^{d} \circ \mathfrak{a}) = \mathds{Z}/2
    \end{align*}
    \begin{align*}
        \mathrm{Ext}^{d-2}_{\mathcal{F}(\mathsf{gr};\mathds{Z})} ( \mathfrak{a}, S^{d} \circ \mathfrak{a}) =
        \begin{cases}
            \mathds{Z}/3 & d \in \{3,4 \}; \\
            0 & \mathrm{otherwise} .
        \end{cases}
    \end{align*}
    
\end{THM}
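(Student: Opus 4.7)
The plan is to analyze Arone's bounded complex computing $\mathrm{Ext}^{*}_{\mathcal{F}(\mathsf{gr};\mathds{Z})}(\mathfrak{a}, S^{d}\circ \mathfrak{a})$ at its top two positions. The table in the introduction indicates that this complex effectively lives in cohomological degrees $0$ through $d-1$, so $\mathrm{Ext}^{d-1}$ is the cokernel of the final differential and $\mathrm{Ext}^{d-2}$ is the homology one step below. My strategy is to pin down the top three terms $C^{d-3}_d$, $C^{d-2}_d$, $C^{d-1}_d$ of Arone's complex and the two differentials between them as explicitly as possible, then reduce each claim to a concrete linear-algebra problem. The promised identification of Arone's complex with an explicit projective resolution of $\mathfrak{a}$ should be the main tool giving control on the differentials.

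For $\mathrm{Ext}^{d-1}=\mathds{Z}/2$, I expect the top term $C^{d-1}_d$ to be of small rank with a natural $\mathds{Z}/2$ symmetry, arising from swapping two ``outermost'' factors in a binary-tree parametrization of partitions of $d$. The claim is then that the incoming differential $C^{d-2}_d\to C^{d-1}_d$ hits exactly the sign-invariant part, leaving a quotient isomorphic to $\mathds{Z}/2$. Verifying this has two halves: the image lies in the invariants by a naturality/symmetry argument, and the surviving generator cannot be a boundary because an explicit obstruction (an invariant modulo $2$) witnesses it. The uniformity of the answer in $d$ is the structural reason to expect this mechanism.

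For $\mathrm{Ext}^{d-2}$ the task is to establish exactness at position $d-2$ for all $d\geq 5$, while producing an extra $\mathds{Z}/3$ summand in the exceptional cases $d=3,4$. The two small cases should be direct computations once the top of the complex has been identified. For $d \geq 5$, I would parametrize generators of $C^{d-2}_d$ by explicit decorated combinatorial data (partitions of $d$ with extra structure inherited from the projective resolution), and for each cycle construct a preimage under $C^{d-3}_d\to C^{d-2}_d$.

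The main obstacle I anticipate is precisely this uniform lifting argument for $\mathrm{Ext}^{d-2}$: cycles must be shown to factor through the image for every $d\geq 5$, in a way that simultaneously controls the potential $2$- and $3$-torsion that appears elsewhere in the table. Once the top of Arone's complex is made fully explicit via the projective resolution of $\mathfrak{a}$, I expect both computations to reduce to finite combinatorial/linear arguments, with the $\mathrm{Ext}^{d-1}$ statement following almost formally from a sign-symmetry observation and the $\mathrm{Ext}^{d-2}$ statement requiring the careful uniform bookkeeping described above.
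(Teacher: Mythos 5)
There is a genuine gap: what you have written is a strategy outline, and at both of the decisive points the actual argument is missing (and in one place the guessed mechanism is wrong). For $\mathrm{Ext}^{d-1}$, the top term $\ACk^{d-1}$ is simply $\mathds{Z}$, generated by $\langle 1\,2\cdots(d-1)\rangle$, and the whole content of the computation is the explicit formula for the last differential: writing $\langle m \rangle^{\mathrm{c}}$ for the basis element of $\ACk^{d-2}$ omitting $m$, one gets $\delta^{d-2}(\langle m\rangle^{\mathrm{c}})=2(-1)^{m}\langle\,\rangle^{\mathrm{c}}$, the factor $2$ being the binomial coefficient $\binom{2}{1}$ coming from the unique gap of size $2$ in the sequence. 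Hence the image is exactly $2\mathds{Z}$ and $H^{d-1}(\AC)\cong\mathds{Z}/2$. Your proposed mechanism (a $\mathds{Z}/2$ symmetry of a larger top term, with the image identified as the sign-invariant part) does not reflect this structure and, as stated, is not an argument; without computing the coefficient $2$ you have no proof that the cokernel is $\mathds{Z}/2$ rather than $0$ or $\mathds{Z}$.

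For $\mathrm{Ext}^{d-2}$ you explicitly defer the crucial step ("the uniform lifting argument for $d\geq 5$"), but that step is precisely where the work lies, and the naive plan of lifting each cycle term by term is not how one succeeds. After computing $\delta^{d-3}$ explicitly, the cocycles in degree $d-2$ are spanned by $u_m=(-1)^m\langle m\rangle^{\mathrm{c}}+\langle 1\rangle^{\mathrm{c}}$, and the classes $[u_m]$ for $m\geq 3$ are coboundaries, so everything reduces to the single class $[u_2]$. The relation $\delta^{d-3}(\langle 1\,2\rangle^{\mathrm{c}})=-3u_2$ gives $3[u_2]=0$ for every $d\geq 3$; for $d\geq 5$ the additional relation $2(u_2-u_4)=-\delta^{d-3}(\langle 2\,4\rangle^{\mathrm{c}})$ gives $2[u_2]=0$, and the two together force $[u_2]=0$. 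Your proposal contains neither the generation statement for the cocycles, nor these two relations, nor any substitute for them; indeed a direct "preimage construction" would have to discover an identity equivalent to the $2$-and-$3$ torsion argument. Finally, for $d\in\{3,4\}$ the issue is not only to compute but to show $[u_2]\neq 0$, which requires exhibiting a well-defined surjection $H^{d-2}(\AC)\to\mathds{Z}/3$ (i.e. checking that no coboundary has nonzero image); calling this a "direct computation" is defensible for fixed small $d$, but combined with the missing core of the $d\geq 5$ case and the incorrect mechanism for $\mathrm{Ext}^{d-1}$, the proposal does not establish the theorem.
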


Let $\mathsf{ab}$ be the category of finitely-generated free abelian groups, $\f(\mathsf{ab}; \mathds{Z})$ be the category of functors from $\mathsf{ab}$ to the category of abelian groups and $I:\mathsf{ab}\to \mathsf{Ab} $ the inclusion functor. 

Comparing these results with $\mathrm{Ext}^{i}_{\mathcal{F}(\mathsf{ab};\mathds{Z})} ( I , S^{d} )$ computed in \cite{Franjou-Pira}, we obtain the following corollary:

\begin{COR}[Corollary \ref{202508041451}] \label{202508081423}
   The functor $- \circ \mathfrak{a}:\mathcal{F}(\mathsf{ab};\mathds{Z})\to \mathcal{F}(\mathsf{gr};\mathds{Z}) $ induces an isomorphism:
  $$\mathrm{Ext}^{1}_{\mathcal{F}(\mathsf{ab};\mathds{Z})} ( I , S^{d} )\xrightarrow{\cong}\mathrm{Ext}^{1}_{\mathcal{F}(\mathsf{gr};\mathds{Z})} ( \mathfrak{a} , S^{d} \circ \mathfrak{a} ).$$ 
\end{COR}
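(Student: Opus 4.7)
The plan is to combine the two Ext$^1$ computations, Franjou--Pirashvili's on the abelian side and Theorem~\ref{202508081422} on the group side, via the exactness of the precomposition functor $-\circ\mathfrak{a}$, reducing the statement to showing that the induced map is nonzero in the prime-power case.

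First, I would verify that $-\circ\mathfrak{a}\colon\mathcal{F}(\mathsf{ab};\mathds{Z})\to\mathcal{F}(\mathsf{gr};\mathds{Z})$ is an exact functor. This is immediate since exactness of a sequence of functors is tested pointwise, and for any $G\in\mathsf{gr}$ the evaluation of $F\mapsto F\circ\mathfrak{a}$ at $G$ is just evaluation of $F$ at the free abelian group $\mathfrak{a}(G)\in\mathsf{ab}$. Exactness gives a well-defined natural map on Ext-groups
\begin{equation*}
  \mathrm{Ext}^{*}_{\mathcal{F}(\mathsf{ab};\mathds{Z})}(I,S^{d})\longrightarrow \mathrm{Ext}^{*}_{\mathcal{F}(\mathsf{gr};\mathds{Z})}(I\circ\mathfrak{a},S^{d}\circ\mathfrak{a})=\mathrm{Ext}^{*}_{\mathcal{F}(\mathsf{gr};\mathds{Z})}(\mathfrak{a},S^{d}\circ\mathfrak{a}),
\end{equation*}
using the identification $I\circ\mathfrak{a}=\mathfrak{a}$.

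Next, I would juxtapose the two computations in degree $1$. By Franjou--Pirashvili \cite{Franjou-Pira}, $\mathrm{Ext}^{1}_{\mathcal{F}(\mathsf{ab};\mathds{Z})}(I,S^{d})$ is $\mathds{Z}/p$ when $d=p^{l}$ for some prime $p$ and $l\in\nat^{*}$, and vanishes otherwise. By Theorem~\ref{202508081422}, the target $\mathrm{Ext}^{1}_{\mathcal{F}(\mathsf{gr};\mathds{Z})}(\mathfrak{a},S^{d}\circ\mathfrak{a})$ has exactly the same description. Hence the induced map is a homomorphism between two abstractly isomorphic finite cyclic groups, both zero outside the prime-power case; in that latter case it suffices to show the map is nonzero in order to conclude it is an isomorphism.

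The main obstacle is therefore exhibiting a nonzero class and tracing it through $-\circ\mathfrak{a}$. The approach I would take is to start from the explicit generator of $\mathrm{Ext}^{1}_{\mathcal{F}(\mathsf{ab};\mathds{Z})}(I,S^{p^{l}})$ arising in Franjou--Pirashvili as the extension class of a short exact sequence built from the Frobenius/divided-power structure on symmetric powers of free abelian groups, and apply $-\circ\mathfrak{a}$ to obtain an extension $0\to S^{p^{l}}\circ\mathfrak{a}\to E\circ\mathfrak{a}\to\mathfrak{a}\to0$ in $\mathcal{F}(\mathsf{gr};\mathds{Z})$. To verify that this class is nonzero, I would match it with the generator of $\mathrm{Ext}^{1}_{\mathcal{F}(\mathsf{gr};\mathds{Z})}(\mathfrak{a},S^{d}\circ\mathfrak{a})$ identified during the proof of Theorem~\ref{202508081422} via Arone's complex; concretely, one computes the connecting map in the relevant low-degree piece of the complex and checks that the cocycle representative coincides, up to a unit mod $p$, with the image of the Franjou--Pirashvili extension. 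The key technical step, and the expected main difficulty, is matching these two concrete cocycles — one arising from a combinatorial formula on symmetric powers of free abelian groups, the other from the differentials in Arone's bounded complex — without redoing the full Ext$^{1}$ computation on the $\mathsf{gr}$ side.
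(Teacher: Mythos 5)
Your setup (exactness of $-\circ\mathfrak{a}$, identification $I\circ\mathfrak{a}=\mathfrak{a}$, and the juxtaposition of the Franjou--Pirashvili computation with Theorem~\ref{202508081422}) matches the paper, and your reduction is logically sound: both sides vanish unless $d=p^l$, and a nonzero homomorphism between two copies of $\mathds{Z}/p$ is an isomorphism. The genuine gap is that the one step carrying all the content --- showing the induced map is nonzero when $d=p^l$ --- is never actually proved. You only describe a plan (``take the Franjou--Pirashvili generator, apply $-\circ\mathfrak{a}$, convert it to a $1$-cocycle of Arone's complex and compare with the generator of Theorem~\ref{202507171119}''), and you yourself flag the cocycle matching as the main difficulty. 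As it stands this is a programme, not an argument: Franjou--Pirashvili do not hand you the generator as an explicit extension in a form ready to be composed with $\mathfrak{a}$ and read off against the basis of $\ACk^{1}$; producing such an extension is essentially the content of the paper's Appendix~\ref{202508141211k} (the functor $E_d$ of Proposition~\ref{202508111122k} and Proposition~\ref{202508041518}), and even then one would still have to translate the extension $E_d\circ\mathfrak{a}$ into a cocycle of $\AC$, which is nowhere carried out in your proposal.

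The paper avoids this entirely with a soft argument you do not use: since $-\circ\mathfrak{a}$ is not only exact but also \emph{full} (\cite[Proposition 2.7]{PV}), a splitting of the image extension in $\mathcal{F}(\mathsf{gr};\mathds{Z})$ descends to a splitting in $\mathcal{F}(\mathsf{ab};\mathds{Z})$, so the induced map $\mathrm{Ext}^{1}_{\mathcal{F}(\mathsf{ab};\mathds{Z})}(F,G)\to\mathrm{Ext}^{1}_{\mathcal{F}(\mathsf{gr};\mathds{Z})}(F\circ\mathfrak{a},G\circ\mathfrak{a})$ is \emph{injective} for all $F,G$ (Lemma~\ref{202508111732}). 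Injectivity between two groups that are both $\mathds{Z}/p$ (resp.\ both $0$) immediately gives the isomorphism, with no cocycle comparison needed; the identification of $E_d\circ\mathfrak{a}$ as a generator then comes for free rather than being an input. To complete your proof you should either supply the fullness-based injectivity argument, or actually carry out the explicit matching of the image of a Franjou--Pirashvili generator with the cocycle $\bigl( \frac{1}{p}\binom{p^l}{1},\ldots,\frac{1}{p}\binom{p^l}{p^l-1}\bigr)$ of Theorem~\ref{202507171119}; the former is a one-paragraph argument, the latter is a substantial computation you have not done.
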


Note that the analogue of Corollary \ref{202508041451} is not true for higher Ext-groups, as explained in the beginning of Appendix \ref{202508141211k}. It would be interesting to have a conceptual explanation of the isomorphism obtained in Corollary \ref{202508041451}. 

For $d = p^{l}$, we give, in Proposition \ref{202508041518}, an explicit extension of $I$ by $S^d$ which generates $\mathrm{Ext}^{1}_{\mathcal{F}(\mathsf{ab};\mathds{Z})} ( I , S^{d} ) \cong \mathds{Z}/ p$.
The extension, which we denote by $E_d$, is constructed by using a refinement of the generator-relation description of $\mathsf{gr}$ given by \cite{Pira}.
Furthermore, for $p=2$, the extension $E_d$ evaluated at $n \in \nat$ induces a nonsplit extension of modules over the integral general linear group of degree $n$.
This is explained in Remark \ref{202508281012k}.

These results are proved by computing the homology of an explicit cochain complex exhibited first by Arone in \cite{arone2025polynomial}. In this paper, we introduce the cochain complex $\AC$ by using an explicit projective resolution of $\mathfrak{a}$ which is induced by the \textit{normalized} bar resolution of a free group. This resolution has been used previously by the second author in \cite{DPV,KawazumiVespa}. The cochain complex $\AC$ involves the cross-effects of the functor $S^{d} \circ \mathfrak{a}$.
Since $S^{d} \circ \mathfrak{a}$ is a polynomial functor of degree $d$, $\AC$   is bounded. The cochain complex $\AC$ is isomorphic to Arone's complex. Hence we have $\mathrm{Ext}^{i}_{\mathcal{F}(\mathsf{gr};\mathds{Z})} ( \mathfrak{a} , S^{d} \circ \mathfrak{a} ) \cong H^i(\AC)$. In Theorems \ref{202507171119}, \ref{202507041404}, \ref{202507111139} and \ref{202507311555K}, we give an explicit cocycle in $\AC$ generating the corresponding Ext-group. 

To prove Theorem \ref{202508081422} and Corollary \ref{202508081423}, we start with explicitly computing $1$-coboundaries and $1$-cocycles. This leads to $H^1(\AC)=\mathds{Z}/g$ where $g$ is the greatest common divisor of the binomial coefficients $\binom{d}{k}$ for $1\leq k\leq d-1$. We deduce Theorem \ref{202508081422} using classical arithmetic arguments based on Kummer's theorem, recalled in Appendix \ref{202507071237}.

To obtain Theorem \ref{202507041404}, we need to compute $H^2(\AC)$. This computation represents the most technically demanding part of this paper and requires the most substantial effort. To compute  $H^2(\AC)$, we apply the universal coefficient theorem. Since $H^2 ( \AC)$ is a finite group, we can detect the $p$-torsion part of $\mathrm{Ext}^2 ( \mathfrak{a}, S^{d} \circ \mathfrak{a})$ by computing $H^1(\AC \otimes \mathds{Z}/p^{N})$ for all $N\in \nat^*$. To compute $H^1(\AC \otimes \mathds{Z}/p^{N})$, we introduce, in section \ref{202507071356}, the auxiliary complex $\ACPN$ whose homology is isomorphic to the homology of $\AC\otimes \mathds{Z}/p^{N}$. The differentials of $\ACPN$ closely resemble those of $\AC$, with binomial coefficients replaced by their $p$-factors.
We compute $H^1(\ACPN)$ using some classical arithmetic results given in Appendix \ref{202507071237}.

To obtain Theorem \ref{202508141030}, we  carefully compute the corresponding coboundaries and cocycles.

A K\"unneth formula can be applied to compute $\mathrm{Ext}^{i}_{\mathcal{F}(\mathsf{gr};\mathds{Z})} ( T^c \circ \mathfrak{a} , S^{d} \circ \mathfrak{a} )$  from the calculation of $\mathrm{Ext}^{i}_{\mathcal{F}(\mathsf{gr};\mathds{Z})} ( \mathfrak{a} , S^{d} \circ \mathfrak{a} )$.

The method developed in this paper is based on the projective resolution of $\mathfrak{a}$ built from the normalized bar resolution of a free group. To better illustrate our method, we also give the computation of $\mathrm{Ext}^{i}_{\mathcal{F}(\mathsf{gr};\mathds{Z})} (T^c \circ \mathfrak{a}, \Lambda^{d} \circ \mathfrak{a} )$. 
This calculation is much easier than the calculation of $\mathrm{Ext}^{i}_{\mathcal{F}(\mathsf{gr};\mathds{Z})} ( T^c \circ \mathfrak{a} , S^{d} \circ \mathfrak{a} )$.
In \cite[Thm 4.2]{vespa2018extensions}, the second author computes $\mathrm{Ext}^{i}_{\mathcal{F}(\mathsf{gr};\mathds{Z})} (T^c \circ \mathfrak{a}, \Lambda^{d} \circ \mathfrak{a} )\otimes \mathds{Q}$.
 In \cite[Corollary 11.4 (2)]{arone2025polynomial}, Arone extends this result to $\mathds{Z}$ and proves that:
\begin{align*}
        \mathrm{Ext}^{i}_{\mathcal{F}(\mathsf{gr};\mathds{Z})} ( T^c \circ \mathfrak{a}, \Lambda^d \circ \mathfrak{a}) =
        \begin{cases}
            \mathds{Z}[R(d,c)]& i =d-c \\
            0 & \mathrm{otherwise} .
        \end{cases}
    \end{align*}
    where $R(d,c)$ is the set of ordered partitions of $d$ into $c$ parts.
In Appendix \ref{202508151448}, we give a proof of Arone's result by using the projective resolution of $\mathfrak{a}$ given in (\ref{202507301119}) and a K\"unneth formula. 

\subsection{Organisation of the paper}
In section \ref{202508141214k}, we present some recollection on functors on free groups. In Section \ref{202507071042}, we introduce the cochain complex $\AC$. In Section \ref{202508081420}, we prove Theorem \ref{202508081422} and Corollary \ref{202508081423}. Sections \ref{202508081426}, \ref{202507041421} and \ref{202507041422} are devoted to the proof of Theorem \ref{202508081440}. In Section \ref{202507041421} we compute $H^1(\AC \otimes \mathds{Z}/p^{N})$ for all $N\in \nat^*$ and Theorem \ref{202508081440} is proved in Section \ref{202507041422}. Section \ref{202508081503} concerns the computation of $\mathrm{Ext}^{d-1}_{\mathcal{F}(\mathsf{gr};\mathds{Z})} ( \mathfrak{a}, S^{d} \circ \mathfrak{a} )$ and $\mathrm{Ext}^{d-2}_{\mathcal{F}(\mathsf{gr};\mathds{Z})} ( \mathfrak{a}, S^{d} \circ \mathfrak{a} )$. In Section \ref{202508081128}, we explain how to obtain $\mathrm{Ext}^{i}_{\mathcal{F}(\mathsf{gr};\mathds{Z})} ( T^c \circ \mathfrak{a} , S^{d} \circ \mathfrak{a} )$ from the calculation of $\mathrm{Ext}^{i}_{\mathcal{F}(\mathsf{gr};\mathds{Z})} ( \mathfrak{a} , S^{d} \circ \mathfrak{a} )$, and we give some examples. In Appendix \ref{202507071237}, we recall some arithmetic results on binomial coefficients that will be useful in this paper. Appendix \ref{202508141211k} concerns $\mathrm{Ext}^{i}_{\mathcal{F}(\mathsf{ab};\mathds{Z})} ( I, S^{d} )$. Appendix \ref{202508151448} concerns $\mathrm{Ext}^{*}_{\mathcal{F}(\mathsf{gr};\mathds{Z})} (T^c \circ \mathfrak{a}, \Lambda^d \circ \mathfrak{a})$.

\subsection{Notation and Conventions}

\begin{enumerate}
\item
$\nat$ denotes the set of non-negative integers and $\nat^*$ the positive integers;
\item 
$\prim$ denotes the set of prime numbers.

\end{enumerate}

\section{Recollections on functors on free groups} 
\label{202508141214k}

In this section, we give an overview of the notations and results on functors of free groups used in this paper. For more details, we refer the reader to \cite{PV}, for example.

\subsection{Generalities on functors on $\gr$} \label{202508211615}

Let $\mathsf{gr}$ be the category of finitely-generated free groups and group homomorphisms. This category is essentially small, with skeleton labeled by $\nat$, where $n\in \nat$ corresponds to the free group $F_n$ of rank $n$. For clarity, we will sometimes denote the object $n$ by $F_n$ or $\mathds{Z}^{\ast n}$. The object $0=F_0=\{1\}$ is a null-object in $\mathsf{gr}$. 

We denote by $\f(\gr; \mathds{Z})$  the category of functors from $\gr$ to $\mathsf{Ab}$, the category of abelian groups. This category is abelian.

A functor $M: \gr \to \mathsf{Ab}$ is said to be \textit{reduced} if $M(0)=0$.

Let $P_n: \gr \to \mathsf{Ab}$ be the functor $\mathds{Z}[\gr(n,-)]$; $\{P_n, n \in \mathbb{N}\}$ is a set of projective generators of the category $\f(\gr; \mathds{Z})$. By the Yoneda lemma, for $F: \gr \to \mathsf{Ab}$, $\text{Hom}_{\f(\gr; \mathbb{Z})}(P_n,F)\cong F(n)$.

We denote by $\overline{P_1}$ the reduced part of $P_1$ i.e. $P_1\cong \mathds{Z} \oplus \overline{P_1}$, where $\mathds{Z}$ is the constant functor on $\gr$.

Let $\mathsf{ab}$ be the category of finitely-generated free abelian groups and group homomorphisms. Abelianization of groups $G \mapsto G/[G,G]$ induces a functor $\gr \to \mathsf{ab}$. Composing this functor with the inclusion functor $I: \mathsf{ab} \to \mathsf{Ab}$, we obtain a functor $\mathfrak{a}: \gr \to \mathsf{Ab}$. By abuse of notation we denote also this functor by $\mathfrak{a}$ and call it the abelianization functor.

Since $\gr$ is a pointed category with finite coproducts, by \cite{hartl2015polynomial}, we can define the notions of cross-effects and polynomial functors in $\f(\gr; \mathds{Z})$. Recall that for $F\in \f(\gr; \mathds{Z})$, its $n$th cross-effect, for $n\in \nat$, is the functor $\textrm{cr}_nF: \gr^{n}\to \mathsf{Ab}$ defined by:
$$\textrm{cr}_nF(G_1, \ldots, G_n)=\textrm{Ker}\Big(F(G_1 \ast \ldots \ast G_n) \to \bigoplus_{i=1}^n F(G_1 \ast \ldots \ast \hat{G_i} \ast \ldots \ast G_n) \Big)$$
where the map is induced by the morphisms $$G_1 \ast \ldots \ast G_n \xrightarrow{\textrm{Id}_{G_1 \ast\ldots \ast G_{i-1}}\ast \epsilon\ast \textrm{Id}_{G_{i+1} \ast\ldots \ast G_{n}}} G_{1} \ast \ldots \ast \hat{G_i} \ast \ldots \ast G_n$$
where $\epsilon_i: G_i \to 0$.\\
By \cite[Prop. 2.4]{hartl2015polynomial}, we have the following natural isomorphisms:
\begin{align}\label{202508071131}
  F(G_1 \ast \ldots \ast G_n) \cong \underset{1 \leq i_1<\ldots < i_k\leq n}{\bigoplus}  \textrm{cr}_kF(G_{i_1}, \ldots, G_{i_k}).
\end{align}
A functor $F \in \f(\gr; \mathds{Z})$ is said to be \textit{polynomial} of degree $n$ if $\textrm{cr}_{n+1}F=0$ and $\textrm{cr}_{n}F \neq 0$.\\
The abelianization functor $\mathfrak{a}$ is polynomial of degree $1$. Composing $\mathfrak{a}$ with the $d$th symmetric power functor $S^d: \mathsf{ab} \to  \mathsf{Ab}$ we obtain a polynomial functor of degree $d$.

\subsection{Projective resolutions of the functor $\mathfrak{a}$ } \label{202507301126}

In this section, we recall the projective resolutions of the abelianization functor which will be used in Section \ref{202507071042}.

By \cite[Proposition 5.1]{JP} (see also \cite{DV2015,  vespa2018extensions}), the exact sequence in $\mathcal{F}(\mathsf{gr};\mathds{Z})$:
\begin{align} \label{202507291730}
 P_\bullet: \qquad    \ldots \to P_{n+1} \xrightarrow{d_n} P_n \to \ldots \to P_2 \xrightarrow{d_1} P_1
\end{align}
is a projective resolution of the abelianization functor $\mathfrak{a}: \mathsf{gr} \to \mathsf{Ab}$.
The natural transformation $d_n: P_n \to P_{n-1}$ is given on a group $G \in \mathsf{gr}$ by the linear map $\mathds{Z}[G^{n+1}] \to \mathds{Z}[G^n]$ such that:
$$d_n([g_1, \ldots, g_{n+1}])=[g_2, \ldots, g_{n+1}]+$$
$$\overset{n}{\underset{i=1}{\sum}}(-1)^{i}[g_1, \ldots, g_{i-1}, g_ig_{i+1}, g_{i+2}, \ldots, g_{n+1}]+(-1)^{n+1}[g_1, \ldots, g_n]$$
for all $(g_1, \ldots, g_{n+1}) \in G^{n+1}$. This resolution is obtained from the bar resolution where we truncate the part of degree $0$. We use here the fact that $H_i(\mathds{Z}^{*n})=\begin{cases}
            \mathds{Z}^{\oplus n} & \mathrm{if\ } i=1, \\
            0& \mathrm{if\ } i>1.
        \end{cases}$

The Ext-group $\mathrm{Ext}^*_{\mathcal{F}(\mathsf{gr})}( \mathfrak{a}, F)$ is the homology of the complex
\begin{align} \label{202507291735}
  F(\mathds{Z}) \xrightarrow{\delta^0} F(\mathds{Z}^{*2} ) \xrightarrow{\delta^1} \ldots \rightarrow F(\mathds{Z}^{*n}) \xrightarrow{\delta^{n-1}} F(\mathds{Z}^{*n+1}) \rightarrow \dots  
\end{align}
with $\delta^{n-1}=F(a^n)+ \sum_{k=1}^n(-1)^{k}F(b_k^n)+(-1)^{n+1} F(c^n)$.
Here, the morphisms 
$$a^n, b_k^n, c^n : \mathds{Z}^{*n} \to \mathds{Z}^{*n+1}$$ are given by the following formulas where $\{x_1, \ldots, x_n\}$ is a free generating set of $\mathds{Z}^{*n}$:

\begin{align*}
        a^n(x_i) {=} x_{i+1};  
 \qquad
        b^n_k(x_i) {=}
        \begin{cases}
            x_i & \mathrm{if\ } i<k, \\
            x_kx_{k+1}& \mathrm{if\ } i=k \\
            x_{i+1} & \mathrm{if\ } i>k;
        \end{cases}
        \qquad c^n(x_i)=x_i.
    \end{align*}

Using the \textit{normalized} bar resolution, we obtain the following variant of the resolution (\ref{202507291730}) (see \cite[Section 4]{DPV}, \cite{KawazumiVespa}):
\begin{align} \label{202507301119}
    \ldots \overline{P_1}^{\otimes (n+1)} \xrightarrow{d_n} \overline{P_1}^{\otimes n} \to \ldots \to \overline{P_1}^{\otimes 2} \xrightarrow{d_1} \overline{P_1}
\end{align}
and the following subcomplex of (\ref{202507291735}) whose homology gives the Ext-groups $\mathrm{Ext}^*_{\mathcal{F}(\mathsf{gr};\mathds{Z} )}( \mathfrak{a}, F)$:
 
\begin{align} \label{202507291740}
  \overline{F}(\mathds{Z}) \xrightarrow{\delta^0} \text{cr}_2F(\mathds{Z}, \mathds{Z}) \xrightarrow{\delta^1} \ldots \rightarrow \text{cr}_nF(\mathds{Z},\ldots, \mathds{Z}) \xrightarrow{\delta^{n-1}} \text{cr}_{n+1}F(\mathds{Z},\ldots, \mathds{Z}) \rightarrow \dots  
\end{align}
using the fact that $\mathrm{Hom}_{\mathcal{F}(\mathsf{gr};\mathds{Z} )}(\overline{P_1}^{\otimes n} , F) \cong \text{cr}_nF(\mathds{Z},\ldots, \mathds{Z})$.

The advantage of this complex lies in the fact that, for $F$ a polynomial functor, this complex is bounded whereas it is not the case of the complex (\ref{202507291735}). We deduce the following Proposition:
\begin{prop} \label{202508271137}
     Let $F \in \mathcal{F}(\mathsf{gr};\mathds{Z} )$ be a polynomial functor of degree $d$, then:
    $$\mathrm{Ext}^i_{\mathcal{F}(\mathsf{gr};\mathds{Z} )}( \mathfrak{a}, F)=0\quad  \textrm{for}\quad  i\geq d.$$
\end{prop}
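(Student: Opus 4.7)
The plan is to read the vanishing directly off the cochain complex (\ref{202507291740}), which the paper has just shown computes $\mathrm{Ext}^{*}_{\mathcal{F}(\mathsf{gr};\mathds{Z})}(\mathfrak{a}, F)$. The term sitting in cohomological degree $i$ of that complex is the iterated cross-effect $\mathrm{cr}_{i+1}F(\mathds{Z}, \ldots, \mathds{Z})$, so the whole statement reduces to verifying that this group vanishes for every $i \geq d$.

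By hypothesis $F$ is polynomial of degree $d$, so by the definition given in Section \ref{202508211615} one has $\mathrm{cr}_{d+1}F = 0$. The next step is to invoke (or briefly re-derive) the standard monotonicity of cross-effects: the vanishing of $\mathrm{cr}_{d+1}F$ forces $\mathrm{cr}_n F = 0$ for all $n \geq d+1$. I would extract this from the decomposition (\ref{202508071131}) by splitting one of the $n+1$ arguments of $\mathrm{cr}_{n+1}F$ as a further free product and comparing the two resulting applications of (\ref{202508071131}); this exhibits $\mathrm{cr}_{n+1}F$ as a natural direct summand built from lower cross-effects, and in particular shows inductively that it vanishes as soon as $\mathrm{cr}_{d+1}F$ does. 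Alternatively one just quotes the classical statement from \cite{hartl2015polynomial}.

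Once this monotonicity is in hand, every term of the complex (\ref{202507291740}) in cohomological degree $\geq d$ is zero, so its cohomology in that range vanishes trivially, and the conclusion $\mathrm{Ext}^{i}_{\mathcal{F}(\mathsf{gr};\mathds{Z})}(\mathfrak{a}, F) = 0$ for $i \geq d$ follows immediately. I do not expect any real obstacle here: the substantive content has already been packaged into the construction of the normalized complex (\ref{202507291740}), whose main virtue, as the paper emphasizes, is precisely that it is bounded whenever $F$ is polynomial.
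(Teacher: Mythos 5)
Your argument is correct and is precisely the paper's: the normalized complex (\ref{202507291740}) computes $\mathrm{Ext}^{*}_{\mathcal{F}(\mathsf{gr};\mathds{Z})}(\mathfrak{a},F)$, its degree-$i$ term is $\mathrm{cr}_{i+1}F(\mathds{Z},\ldots,\mathds{Z})$, and polynomiality of degree $d$ (together with the standard vanishing of all higher cross-effects once $\mathrm{cr}_{d+1}F=0$, which your summand argument via (\ref{202508071131}) or a citation of \cite{hartl2015polynomial} supplies) makes the complex vanish in degrees $\geq d$. This is exactly how the paper deduces the proposition from the boundedness of (\ref{202507291740}); you merely make the cross-effect monotonicity step explicit.
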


\begin{Example}
    The functor 
$S^d \circ \mathfrak{a}$ is polynomial of degree $d$. The application of Proposition \ref{202508271137} to 
$F=S^d \circ \mathfrak{a}$ yields $\mathrm{Ext}^i_{\mathcal{F}(\mathsf{gr};\mathds{Z} )}( \mathfrak{a}, S^d \circ \mathfrak{a})=0$ for $i\geq d$.
\end{Example}

\section{Recollections on Arone's complex} \label{202507071042}

Let $d \in \nat^*$.
In \cite{arone2025polynomial}, Arone gives an explicit cochain complex over $\mathds{Z}$ computing $\mathrm{Ext}^{\bullet}_{\mathcal{F}(\mathsf{gr};\mathds{Z})} ( \mathfrak{a}, S^{d} \circ \mathfrak{a} )$. 
In this section, we recover Arone's cochain complex algebraically using the projective resolution recalled in (\ref{202507301119}). More precisely, we consider the complex $\AC$ obtained from (\ref{202507291740}) for $F=S^{d} \circ \mathfrak{a}$. This complex satisfies $H^i(\AC)= \mathrm{Ext}^{i}_{\mathcal{F}(\mathsf{gr};\mathds{Z})} ( \mathfrak{a}, S^{d} \circ \mathfrak{a} )$ and is isomorphic to Arone's complex (see Remark \ref{202507311546}).

We denote by $\Theta_{k+1}$ the intersection of the interior of the standard simplex scaled by $k+1$ and $\nat^{k+1}$, in other words:
$$\Theta_{k+1}=\{(\alpha_1, \ldots, \alpha_{k+1})\in \nat^{k+1} \ | \ \sum_{i=1}^{k+1}\alpha_i=d \textrm{\ and\ } \forall i ~(\alpha_i>0)  \}$$

We can compute the cross-effects of the functor $F=S^{d} \circ \mathfrak{a}$ using the decompositions (\ref{202508071131})
to obtain:
$$cr_{k+1}(S^{d} \circ \mathfrak{a})(\mathds{Z},\ldots, \mathds{Z})=\mathds{Z}\langle e_1^{\alpha_1}e_2^{\alpha_2} \ldots e_{k+1}^{\alpha_{k+1}} | (\alpha_1, \ldots, \alpha_{k+1}) \in \Theta_{k+1}\rangle \cong \mathds{Z}\langle \Theta_{k+1}\rangle$$
where $\{e_1, \ldots, e_{k+1}\}$ is the canonical basis of $\mathfrak{a}(k+1)$.

By the definition of the differential $\delta^{k}: cr_{k+1}(S^{d} \circ \mathfrak{a})(\mathds{Z},\ldots, \mathds{Z}) \to cr_{k+2}(S^{d} \circ \mathfrak{a})(\mathds{Z},\ldots, \mathds{Z}) $ given in Section  \ref{202507301126}, we have:
\begin{align*}
  \delta^{k}(e_1^{\alpha_1}e_2^{\alpha_2} \ldots e_{k+1}^{\alpha_{k+1}})=&e_2^{\alpha_1}e_3^{\alpha_2} \ldots e_{k+2}^{\alpha_{k+1}}+ \sum_{j=1}^{k+1}(-1)^j e_1^{\alpha_1} \ldots e_{j-1}^{\alpha_{j-1}}(e_j+e_{j+1})^{\alpha_j}e_{j+2}^{\alpha_{j+1}} \ldots e_{k+2}^{\alpha_{k+1}}\\
  +&(-1)^{k+2}e_1^{\alpha_1}e_2^{\alpha_2} \ldots e_{k+1}^{\alpha_{k+1}}  
\end{align*}
By expanding and simplifying $(e_j+e_{j+1})^{\alpha_j}$, we obtain:
\begin{align*}
  \delta^{k}(e_1^{\alpha_1}e_2^{\alpha_2} \ldots e_{k+1}^{\alpha_{k+1}})=& \sum_{j=1}^{k+1}(-1)^j e_1^{\alpha_1} \ldots e_{j-1}^{\alpha_{j-1}}\left(  \sum_{\beta=1}^{\alpha_j-1} \binom{\alpha_j}{\beta}e_j^\beta e_{j+1}^{\alpha_j-\beta} \right)e_{j+2}^{\alpha_{j+1}} \ldots e_{k+2}^{\alpha_{k+1}} \\
  =&\sum_{j=1}^{k+1}\sum_{\beta=1}^{\alpha_j-1} (-1)^j  \binom{\alpha_j}{\beta} e_1^{\alpha_1} \ldots e_{j-1}^{\alpha_{j-1}} e_j^\beta e_{j+1}^{\alpha_j-\beta} e_{j+2}^{\alpha_{j+1}} \ldots e_{k+2}^{\alpha_{k+1}}
\end{align*}

Let $n_i=\sum_{k=1}^{i} \alpha_k$. By the condition $\alpha_k>0$, we have:
$$0<n_1=\alpha_1<n_2< \ldots < n_{m}<n_{m+1}=d, $$
so $\Theta_{m+1}\cong \{(n_1, \ldots, n_m)\in \nat^m\ |\ 0<n_1<\ldots< n_m<d\}$

Therefore, the complex (\ref{202507291740}) for $F=S^{d} \circ \mathfrak{a}$ can be described in the following way, corresponding, up to a sign (see Remark \ref{202507311546}), to the description given by Arone in \cite{arone2025polynomial}:             
for $d \in \nat^*$, we have:

\begin{align*}
   \AC :\quad  \ACk^{0}  \xrightarrow{\delta^0} \ACk^{1} \xrightarrow{\delta^1}\ACk^{2} \xrightarrow{\delta^2} \cdots \to \ACk^{d-1} \xrightarrow{\delta^{d-1}} 0 \to \cdots .
\end{align*}
where $\ACk^k$ is the free abelian group of rank $\binom{d-1}{k}$ with basis $\langle n_{1}n_{2} \cdots n_{k} \rangle$ where $n_j \in \nat$ such that $0 < n_{1} < n_{2} < \cdots < n_{k} < d$.
In particular, we denote by $\langle ~ \rangle$ the generator of $\ACk^0$. In other words $\ACk^k=\mathds{Z}\langle \Theta_{k+1}\rangle \cong cr_{k+1}(S^{d} \circ \mathfrak{a})(\mathds{Z},\ldots, \mathds{Z})$.
Then the differential $\delta^{k} : \ACk^{k} \to \ACk^{k+1}$ is given by
\begin{align} \label{202507011523}
    \delta^{k} ( \langle n_1n_2 \cdots n_{k} \rangle ) {:=} \sum^{k+1}_{j=1} \sum^{n_{j}-1}_{m = n_{j-1} +1 } (-1)^{j} \binom{n_{j}-n_{j-1}}{m-n_{j-1}} \langle n_1 n_2 \cdots n_{j-1} m n_{j} \cdots n_{k} \rangle 
\end{align}
where $n_{0} = 0$ and $n_{k+1} = d$.

In particular, the first component $\ACk^{1}$ is the free abelian group with basis $\{ \langle n_{1}\rangle\ |\ 0 < n_{1}< d\}$.
It can be identified with $\mathds{Z}^{d-1}$ by using the relation:
\begin{align*}
    \sum^{d-1}_{i=1} a_i \langle i \rangle = (a_1,a_2, \cdots, a_{d-1}) .
\end{align*}

We have $\delta^0( \langle~\rangle)= -\left( \binom{d}{1}, \binom{d}{2}, \cdots, \binom{d}{d-1} \right) \in \mathds{Z}^{d-1}$, and the group of $1$-coboundaries $B^1 (\AC)$ is generated by 
\begin{align*}
    \left( \binom{d}{1}, \binom{d}{2}, \cdots, \binom{d}{d-1} \right) \in \mathds{Z}^{d-1} .
\end{align*}

The description of the $1$-cocycles of $\AC$ given in the following lemma plays an important role in the rest of the paper.

\begin{Lemma} \label{202507101541}
   A $1$-cochain $\sum^{d-1}_{i=1} a_i \ \langle i\rangle \in \ACk^{1}$ lies in the group of $1$-cocycles $Z^{1} (\AC )$ if and only if $(a_1, \ldots, a_{d-1}) \in \mathds{Z}^{d-1}$ satisfies the following system of equations:
\begin{align} \label{202506301221}
    \binom{k}{r} a_k = \binom{d-r}{d-k} a_r, \quad 0<r<k<d .
\end{align}
\end{Lemma}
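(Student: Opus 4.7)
My plan is to unfold the definition of $\delta^{1}$ from formula (\ref{202507011523}) applied to a general $1$-cochain and then read off the condition for the result to vanish basis-element by basis-element in $\ACk^{2}$.

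First, I would specialize (\ref{202507011523}) to $k = 1$. Writing $n_{0}=0$ and $n_{2}=d$, the formula for $\delta^{1}(\langle n_{1}\rangle)$ splits into exactly two groups of terms, coming from $j=1$ and $j=2$:
\begin{align*}
    \delta^{1}(\langle n_{1}\rangle)
    = -\sum_{m=1}^{n_{1}-1} \binom{n_{1}}{m}\langle m\, n_{1}\rangle
    + \sum_{m=n_{1}+1}^{d-1} \binom{d-n_{1}}{m-n_{1}}\langle n_{1}\, m\rangle .
\end{align*}
So each generator $\langle n_{1}\rangle$ contributes terms $\langle m\, n_{1}\rangle$ with $m<n_{1}$ (with a minus sign) and terms $\langle n_{1}\, m\rangle$ with $m>n_{1}$ (with a plus sign).

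Next, I would apply $\delta^{1}$ linearly to $\sum_{i=1}^{d-1} a_{i}\langle i\rangle$ and collect, for each pair $0<r<k<d$, the coefficient of the basis element $\langle r\, k\rangle \in \ACk^{2}$. Inspecting the two sums above, the only contributions are from $a_{k}\langle k\rangle$ (the first sum with $n_{1}=k$ and $m=r$) and from $a_{r}\langle r\rangle$ (the second sum with $n_{1}=r$ and $m=k$). This yields
\begin{align*}
    \big[\langle r\, k\rangle\big]\,\delta^{1}\!\left(\sum_{i=1}^{d-1}a_{i}\langle i\rangle\right)
    = -\binom{k}{r}\,a_{k} + \binom{d-r}{k-r}\,a_{r} .
\end{align*}
Using the identity $\binom{d-r}{k-r}=\binom{d-r}{d-k}$, the vanishing of this coefficient for every $0<r<k<d$ is equivalent to the system (\ref{202506301221}). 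Since the set $\{\langle r\, k\rangle : 0<r<k<d\}$ is a basis of $\ACk^{2}$, the cocycle condition $\delta^{1}\big(\sum_{i} a_{i}\langle i\rangle\big)=0$ is exactly (\ref{202506301221}), which proves the lemma.

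There is no genuine obstacle here; the argument is a bookkeeping exercise. The only care required is in identifying which of the two branches of $\delta^{1}$ produces $\langle r\, k\rangle$ from which input generator, and in checking the sign from the $(-1)^{j}$ factor. Rewriting $\binom{d-r}{k-r}$ as $\binom{d-r}{d-k}$ at the end is cosmetic but brings the relation into the symmetric form stated in the lemma.
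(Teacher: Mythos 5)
Your proposal is correct and follows essentially the same route as the paper: expand $\delta^{1}(\langle i\rangle)$ from (\ref{202507011523}), apply linearity, read off the coefficient of each basis element $\langle r\,k\rangle$ of $\ACk^{2}$, and use $\binom{d-r}{k-r}=\binom{d-r}{d-k}$ to arrive at the system (\ref{202506301221}). No gaps; the sign bookkeeping and the identification of the two contributing branches are exactly as in the paper's argument.
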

\begin{proof}
    By the definition of $\delta^1$ we have:
$$\delta^1(\langle i \rangle)= - \sum^{n_1-1}_{m=1}  \binom{i}{m} \langle m ~i \rangle +  \sum^{d-1}_{m=i+1}  \binom{d-i}{m-i} \langle i ~m \rangle$$
    Hence, we have
    $$\delta^{1}(\sum^{d-1}_{i=1} a_i \ \langle i\rangle)=\sum^{d-1}_{i=1} a_i \left(- \sum^{i-1}_{m=1}  \binom{i}{m} \langle m ~i \rangle +  \sum^{d-1}_{m=i+1}  \binom{d-i}{m-i} \langle i ~ m \rangle \right).$$
    The coefficient of $\langle r ~ k \rangle$ for $0<r<k<d$ in $\delta^{1}(\sum^{d-1}_{i=1} a_i \ \langle i\rangle)$ is
    $$- a_k\binom{k}{r} +a_r \binom{d-r}{k-r}$$
    Since $\{ \langle r ~ k \rangle\ | \  0<r<k<d \}$ is a basis of $\ACk^2$, we deduce the statement using the relation $\binom{d-r}{k-r}=\binom{d-r}{d-k}$.
\end{proof}

\begin{Lemma} \label{202510071626}
    For $d \in \nat^*$ and $i \in \nat$, $H^i(\AC)$ is a finite group.
\end{Lemma}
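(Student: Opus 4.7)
The plan is to combine the boundedness of the complex $\AC$ with the rational vanishing result (\ref{202508111534}) recalled in the introduction.

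First I would observe that $\AC$ is a bounded cochain complex of finitely generated free abelian groups: the term $\ACk^k$ has rank $\binom{d-1}{k}$ and vanishes for $k \geq d$. Consequently each cohomology group $H^i(\AC)$ is a finitely generated abelian group.

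Next, since every $\ACk^k$ is $\mathds{Z}$-flat, tensoring with $\mathds{Q}$ commutes with taking cohomology, so that
\begin{align*}
    H^i(\AC) \otimes_{\mathds{Z}} \mathds{Q} \;\cong\; H^i(\AC \otimes_{\mathds{Z}} \mathds{Q}).
\end{align*}
By the identification $H^i(\AC) \cong \mathrm{Ext}^i_{\mathcal{F}(\mathsf{gr};\mathds{Z})}(\mathfrak{a}, S^d \circ \mathfrak{a})$ already established in this section, the left-hand side is the rationalization of the integral Ext group, which by (\ref{202508111534}) vanishes in every case relevant to the statement. A finitely generated abelian group with trivial rationalization is finite, so $H^i(\AC)$ is finite as claimed.

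The argument is essentially a rank-counting exercise once one has (\ref{202508111534}) in hand, and I do not expect any genuine obstacle. The only point worth emphasizing is the flatness of the individual terms of $\AC$, which is what permits rationalization to commute with cohomology; one could alternatively bypass (\ref{202508111534}) by constructing an explicit contracting homotopy of $\AC \otimes \mathds{Q}$, but invoking the rational computation is far shorter.
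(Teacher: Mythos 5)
Your argument is correct and essentially identical to the paper's proof: finite generation of $H^i(\AC)$ comes from the terms $\ACk^i$ being finitely generated free abelian groups, and finiteness then follows from the rational vanishing of $\mathrm{Ext}^i_{\mathcal{F}(\mathsf{gr};\mathds{Z})}(\mathfrak{a}, S^d\circ\mathfrak{a})$, which is exactly the result (\ref{202508111534}) from \cite{vespa2018extensions} that the paper invokes (the intermediate flatness step $H^i(\AC)\otimes\mathds{Q}\cong H^i(\AC\otimes\mathds{Q})$ is harmless but not needed, and its justification rests on $\mathds{Q}$ being flat over $\mathds{Z}$ rather than on the flatness of the terms). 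The only caveat, shared with the paper's own proof, is the case $d=1$, $i=0$, where the rationalization is $\mathds{Q}$ rather than $0$ and $H^0(\AC)\cong\mathds{Z}$ is not finite, so the statement is implicitly restricted to $d>1$ (equivalently, to the degrees where the rational Ext-groups vanish).
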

\begin{proof}
    By definition, for all $i \in \nat$, $\ACk^i$ are finitely generated free abelian groups, so $H^i(\AC)$ are finitely generated abelian groups. Moreover by \cite[Theorem 4.2]{vespa2018extensions} (see also \cite[Lemma 11.7]{arone2025polynomial}) $\mathrm{Ext}^i_{\mathcal{F}(\mathsf{gr};\mathds{Z})} ( \mathfrak{a}, S^{d} \circ \mathfrak{a})\otimes \mathbb{Q}=0$ for $d>1$ and all $i$. So $H^i(\AC) \cong \mathrm{Ext}^i_{\mathcal{F}(\mathsf{gr};\mathds{Z})} ( \mathfrak{a}, S^{d} \circ \mathfrak{a})$ are finite groups.
\end{proof}

\begin{remark} \label{202507311546}
In \cite{arone2025polynomial}, Arone defines a complex where the term in degree $k$ is the free abelian group with basis $\langle n_{0}n_{1} \cdots n_{k-1} \rangle$ for $n_j \in \nat$ such that $0 < n_{0} < n_{1} < \cdots < n_{k-1} < d$. This complex is isomorphic to $\AC$ assigning :
$$\langle n_{0}n_{1} \cdots n_{k-1} \rangle \mapsto (-1)^k\langle n_{0}n_{1} \cdots n_{k-
1} \rangle.$$
\end{remark}

\section{On $\mathrm{Ext}^{1}_{\mathcal{F}(\mathsf{gr};\mathds{Z})} ( \mathfrak{a}, S^{d} \circ \mathfrak{a} )$} \label{202508081420}

In this section, we give a computation of $\mathrm{Ext}^{1}_{\mathcal{F}(\mathsf{gr};\mathds{Z})} ( \mathfrak{a}, S^{d} \circ \mathfrak{a} )$ with an explicit extension that generates this group.

\subsection{Computation of $\mathrm{Ext}^{1}_{\mathcal{F}(\mathsf{gr};\mathds{Z})} ( \mathfrak{a}, S^{d} \circ \mathfrak{a} )$}
    The aim of this section is to prove the following theorem extending the computations given in \cite{arone2025polynomial} Table 2 (the first column).
\begin{theorem} \label{202507171119}
    For $d \in \nat^*$, we have:
    \begin{align*}
  \mathrm{Ext}^{1}_{\mathcal{F}(\mathsf{gr};\mathds{Z})} ( \mathfrak{a}, S^{d} \circ \mathfrak{a} )= H^1 ( \AC) \cong
        \begin{cases}
            \mathds{Z}/ p & \mathrm{~if~} d = p^l \mathrm{~for~} p \mathrm{~a~prime~and~}  l \in \nat^*
             , \\
            0  & \mathrm{otherwise}.
        \end{cases}
    \end{align*}

    In the first case, a generator is given by $$\left( \frac{1}{p} \binom{p^{l}}{1}, \frac{1}{p} \binom{p^{l}}{2}, \cdots, \frac{1}{p} \binom{p^{l}}{p^{l}-1} \right)\in \mathds{Z}^{p^l-1}.$$
\end{theorem}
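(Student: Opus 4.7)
\textbf{Proof plan for Theorem \ref{202507171119}.}

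The strategy is to exploit Lemma \ref{202507101541} to parameterize $Z^1(\AC)$ by a single integer $a_1$, then identify $B^1(\AC)$ inside this parameterization and invoke Kummer's theorem.

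First, I would specialize the cocycle relation $\binom{k}{r} a_k = \binom{d-r}{d-k} a_r$ to $r = 1$, giving $k\,a_k = \binom{d-1}{k-1}\,a_1$. Multiplying both sides by $d$ and using the identity $d\binom{d-1}{k-1} = k \binom{d}{k}$, this becomes the clean relation
\begin{align*}
    d \cdot a_k \;=\; \binom{d}{k}\, a_1, \qquad 1 \leq k \leq d-1.
\end{align*}
Thus every cocycle is uniquely determined by the value of $a_1$, subject to the divisibility constraint that $d \mid \binom{d}{k} a_1$ for all $k$. Conversely, I would check that any $(a_1,\ldots,a_{d-1})$ defined from $a_1$ via this formula satisfies the full system (\ref{202506301221}): the required identity $\binom{k}{r}\binom{d}{k} = \binom{d-r}{d-k}\binom{d}{r}$ is a routine expansion of factorials, both sides equaling $d!/\bigl(r!\,(k-r)!\,(d-k)!\bigr)$.

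Let $c$ be the smallest positive integer satisfying $d \mid c\binom{d}{k}$ for all $1\leq k\leq d-1$. Then $Z^1(\AC) \cong c\mathds{Z}$ via the $a_1$-parametrization, and I have already noted that $B^1(\AC)$ is generated by the cocycle with $a_1 = d$, so $B^1(\AC) \cong d\mathds{Z}$. Hence
\begin{align*}
    H^1(\AC) \;\cong\; c\mathds{Z}/d\mathds{Z} \;\cong\; \mathds{Z}/(d/c).
\end{align*}
A $p$-adic valuation computation gives $d/c = \gcd_{1\leq k \leq d-1}\binom{d}{k} =: g$, since $v_p(c) = v_p(d) - \min_k v_p\bigl(\binom{d}{k}\bigr)$ for every prime $p$.

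Finally, to identify $g$, I would invoke the classical consequence of Kummer's theorem recalled in Appendix \ref{202507071237}: $g = p$ when $d = p^l$ is a prime power, and $g = 1$ otherwise. In the prime-power case, $c = p^{l-1}$, and taking $a_1 = p^{l-1}$ yields the explicit generator $a_k = \binom{p^l}{k}/p$ stated in the theorem. The main obstacle is not really conceptual: the verification that the one-parameter family automatically satisfies all relations (\ref{202506301221}) is the only nontrivial bookkeeping, but it collapses to the symmetric factorial identity above, so the proof reduces almost entirely to the arithmetic input from Kummer's theorem.
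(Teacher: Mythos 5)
Your proposal is correct and follows essentially the same route as the paper: specialize Lemma \ref{202507101541} to $r=1$, use the identity $d\binom{d-1}{k-1}=k\binom{d}{k}$ to obtain $d\,a_k=\binom{d}{k}a_1$, verify that this one-parameter family satisfies all of (\ref{202506301221}), identify $H^1(\AC)\cong \mathds{Z}/g$ with $g=\gcd_{0<k<d}\binom{d}{k}$, and conclude via the Kummer-based Corollary \ref{202506301226}, including the same explicit generator $\bigl(\tfrac{1}{p}\binom{p^l}{1},\ldots,\tfrac{1}{p}\binom{p^l}{p^l-1}\bigr)$ when $d=p^l$. The only (harmless) difference is bookkeeping: the paper exhibits the generator of $Z^1(\AC)$ directly via a B\'ezout combination of the $\binom{d}{k}$, whereas you compute the image ideal $c\mathds{Z}$ of the $a_1$-projection prime-by-prime and note $d/c=g$; both arguments are valid and yield the same conclusion.
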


\begin{proof}
    We have $H^1 ( \AC)=Z^1 (\AC)/B^1 (\AC)$. By Section \ref{202507071042}, $Z^1 (\AC)$ is the group of solutions in $\mathds{Z}^{d-1}$ of the system of equations (\ref{202506301221}) given in Lemma \ref{202507101541}. Let $(a_1, \cdots, a_{d-1}) \in Z^1 ( \AC)$. Considering the equations for $r=1$ in this system, we obtain:
    \begin{align} \label{202507071130}
    ka_k = \binom{d-1}{d-k}a_1 ,\quad 1<k<d.
    \end{align}
    By using the identity $d \binom{d-1}{d-k} = k \binom{d}{k}$, we obtain 
        \begin{align} \label{202506301230}
            \binom{d}{k}a_1 = da_k .
        \end{align}
    Let $g$ be the greatest common divisor of $\binom{d}{k}$ where $0 < k < d$. By using B\'ezout's identity, we choose $\lambda_1,\cdots,\lambda_{d-1} \in \mathds{Z}$ such that $\sum^{d-1}_{k=1} \lambda_{k} \binom{d}{k} = g$.
    Then we obtain
    $$
    ga_1 = \sum^{d-1}_{k=1} \lambda_k \binom{d}{k} a_1 = \sum^{d-1}_{k=1} \lambda_{k} d a_k = d b ,
    $$
    where $b = \sum^{d-1}_{k=1} \lambda_{k} a_k$. By (\ref{202506301230}), we see that $a_k = \frac{1}{d} \binom{d}{k} a_1 = \frac{1}{g} \binom{d}{k} b$ for $0 < k < d$.
    
    Now we prove that the other equations of the system (\ref{202506301221}) are satisfied. For $1<r<k<d$, we have:
    $$\binom{k}{r} a_k=\binom{d-r}{d-k}a_r\stackrel{(\ref{202507071130})}{=}\binom{d-r}{d-k} \frac{1}{r}\binom{d-1}{d-r} a_1=\frac{1}{r}\binom{d-1}{d-k} \binom{k-1}{r-1} a_1$$
    This equality is equivalent to 
    $$a_k=\frac{1}{k}\binom{d-1}{d-k} a_1$$
    corresponding to the equations (\ref{202507071130}).
    
    Since $g \mid \binom{d}{k}$, this proves that 
    $Z^1 ( \AC)$ is generated by $( \frac{1}{g} \binom{d}{1} , \cdots, \frac{1}{g} \binom{d}{d-1} )$.
    Since $B^1 ( \AC)$ is generated by $( \binom{d}{1} , \cdots, \binom{d}{d-1} )$ we obtain that the homology group $H^1 ( \AC)$ is isomorphic to $\mathds{Z}/ g$.  Thus, the statement follows from Corollary \ref{202506301226}.
\end{proof}

\subsection{Comparison with $\mathrm{Ext}^{1}_{\mathcal{F}(\mathsf{ab};\mathds{Z})} ( I , S^{d} )$}
Let $I:\mathsf{ab}\to \mathsf{Ab} $ be the inclusion functor.
In this section, we compare the group $ \mathrm{Ext}^{1}_{\mathcal{F}(\mathsf{gr};\mathds{Z})} ( \mathfrak{a}, S^{d} \circ \mathfrak{a} )$ with $\mathrm{Ext}^{1}_{\mathcal{F}(\mathsf{ab};\mathds{Z})} ( I , S^{d} )$ for  $d\in \nat^*$.

We have the following lemma:
\begin{Lemma} \label{202508111732}
    For $F,G\in\mathcal{F}(\mathsf{ab};\mathds{Z})$, the functor $- \circ \mathfrak{a}:\mathcal{F}(\mathsf{ab};\mathds{Z})\to \mathcal{F}(\mathsf{gr};\mathds{Z}) $ induces an injective map:
    $$\mathrm{Ext}^{1}_{\mathcal{F}(\mathsf{ab};\mathds{Z})} ( F , G )\hookrightarrow\mathrm{Ext}^{1}_{\mathcal{F}(\mathsf{gr};\mathds{Z})} ( F \circ \mathfrak{a} , G \circ \mathfrak{a} ).$$ 
\end{Lemma}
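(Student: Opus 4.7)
The plan is to establish that the precomposition functor $-\circ \mathfrak{a}: \mathcal{F}(\mathsf{ab};\mathds{Z}) \to \mathcal{F}(\gr;\mathds{Z})$ is exact and fully faithful, and then to deduce injectivity on $\mathrm{Ext}^{1}$ by a general argument.

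First, $-\circ\mathfrak{a}$ is exact since precomposition preserves kernels and cokernels pointwise. For full faithfulness, the key inputs are two elementary properties of $\mathfrak{a}:\gr\to\mathsf{ab}$: (i) \emph{essential surjectivity}, since every finitely generated free abelian group is of the form $\mathds{Z}^{n}=\mathfrak{a}(F_{n})$; and (ii) \emph{fullness}, since every homomorphism $\mathds{Z}^{m}\to\mathds{Z}^{n}$ is the abelianization of some $\psi: F_{m}\to F_{n}$ obtained by choosing arbitrary lifts of the images of a free basis.

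Given $F,G\in\mathcal{F}(\mathsf{ab};\mathds{Z})$ and a natural transformation $\mu: F\circ\mathfrak{a}\to G\circ\mathfrak{a}$, I would set $\tilde\mu_{\mathds{Z}^{n}}:=\mu_{F_{n}}$ on the skeleton of $\mathsf{ab}$. Naturality of $\tilde\mu$ with respect to an arbitrary $\phi:\mathds{Z}^{m}\to\mathds{Z}^{n}$ reduces, by (ii), to the naturality of $\mu$ along any lift $\psi:F_{m}\to F_{n}$ of $\phi$; the resulting square in $\mathsf{Ab}$ depends only on $\phi=\mathfrak{a}(\psi)$, so the construction does not depend on the chosen lift. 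Essential surjectivity (i) also immediately shows that $-\circ\mathfrak{a}$ is injective on morphisms.

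With full faithfulness established, suppose an extension $0\to G\to E\xrightarrow{p} F\to 0$ in $\mathcal{F}(\mathsf{ab};\mathds{Z})$ becomes split after precomposition with $\mathfrak{a}$. A section $s:F\circ\mathfrak{a}\to E\circ\mathfrak{a}$ lifts by fullness to a unique $\tilde s:F\to E$; the composite $p\circ\tilde s$ maps to $\mathrm{id}_{F\circ\mathfrak{a}}=\mathrm{id}_{F}\circ\mathfrak{a}$ under $-\circ\mathfrak{a}$, so by faithfulness $p\circ\tilde s=\mathrm{id}_{F}$, and the original extension splits. The main point requiring care in this plan is the well-definedness of $\tilde\mu$ across different lifts of a given $\phi$ and its naturality along all of $\mathsf{ab}$, both of which reduce cleanly to property (ii); beyond this, the argument is essentially formal.
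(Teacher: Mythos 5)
Your proof is correct and follows essentially the same route as the paper: exactness of $-\circ\mathfrak{a}$ guarantees that an extension is sent to an extension, and fullness (together with faithfulness, which the paper leaves implicit) shows that a splitting of $E\circ\mathfrak{a}$ forces a splitting of $E$, so the kernel of the induced map on $\mathrm{Ext}^{1}$ is trivial. The only difference is that the paper cites \cite[Proposition 2.7]{PV} for exactness and fullness of $-\circ\mathfrak{a}$, whereas you prove these properties directly from the fullness and essential surjectivity of $\mathfrak{a}$, which is a fine self-contained substitute.
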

\begin{proof}
    Let $E$ be an extension of $F$ by $G$ in $\mathcal{F}(\mathsf{ab};\mathds{Z})$. Since the functor $- \circ \mathfrak{a}$ is exact (see \cite[Proposition 2.7]{PV}) , the image of $E$ by  $- \circ \mathfrak{a}$ is an extension of $ F \circ \mathfrak{a}$ by $G \circ \mathfrak{a}$. Assume that the image of $E$ by  $- \circ \mathfrak{a}$ is split in $\mathcal{F}(\mathsf{gr};\mathds{Z})$. Since the functor $- \circ \mathfrak{a}$ is full (see \cite[Proposition 2.7]{PV}) then $E$ is split in $\mathcal{F}(\mathsf{ab};\mathds{Z})$.
    Hence, the map in the statement is injective.
    \end{proof}

    Note that the injective map obtained in the previous lemma is not an isomorphism in general. For example, for $\Lambda^{2}$ the second exterior power, we have $\mathrm{Ext}^{1}_{\mathcal{F}(\mathsf{ab};\mathds{Z})} ( I , \Lambda^{2} )=0$ by \cite[Cor. 2.3]{Franjou-Pira} whereas $\mathrm{Ext}^{1}_{\mathcal{F}(\mathsf{gr};\mathds{Z})} ( \mathfrak{a} , \Lambda^2 \circ \mathfrak{a} )=\mathds{Z}$ by  \cite[Thm 4.2]{vespa2018extensions} (see also Proposition \ref{202508281110}).

    As a corollary of Theorem \ref{202507171119}, we obtain the following result:
\begin{Corollary} \label{202508041451}
   The functor $- \circ \mathfrak{a}:\mathcal{F}(\mathsf{ab};\mathds{Z})\to \mathcal{F}(\mathsf{gr};\mathds{Z}) $ induces an isomorphism:
  $$\mathrm{Ext}^{1}_{\mathcal{F}(\mathsf{ab};\mathds{Z})} ( I , S^{d} )\xrightarrow{\cong}\mathrm{Ext}^{1}_{\mathcal{F}(\mathsf{gr};\mathds{Z})} ( \mathfrak{a} , S^{d} \circ \mathfrak{a} ).$$ 
  In particular, for $d = p^{l}$, the extension in $\mathcal{F}(\mathsf{gr};\mathds{Z})$:
\begin{align*}
    0 \to S^{d} \circ \mathfrak{a} \to E_{d}\circ \mathfrak{a} \to \mathfrak{a} \to 0
\end{align*}
where $E_d$ is the functor given in Proposition  \ref{202508111122k}, is a generator of   $\mathrm{Ext}^{1}_{\mathcal{F}(\mathsf{gr};\mathds{Z})} (\mathfrak{a} , S^{d} \circ \mathfrak{a}) \cong \mathds{Z}/ p$.
\end{Corollary}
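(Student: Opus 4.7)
The plan is to recognize both sides of the asserted isomorphism as finite cyclic groups of the same order, and then to invoke the injectivity statement of Lemma \ref{202508111732} to conclude. The comparison of orders rests on two inputs: Theorem \ref{202507171119}, proved above, and the computation of $\mathrm{Ext}^{1}_{\mathcal{F}(\mathsf{ab};\mathds{Z})}(I, S^{d})$ due to Franjou and Pirashvili \cite{Franjou-Pira}, recalled in Appendix \ref{202508141211k}.

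First I would extract from \cite{Franjou-Pira} the equality
\begin{equation*}
    \mathrm{Ext}^{1}_{\mathcal{F}(\mathsf{ab};\mathds{Z})}(I, S^{d}) \cong
    \begin{cases} \mathds{Z}/p & \text{if } d = p^{l} \text{ with } p \in \prim \text{ and } l \in \nat^{*}, \\ 0 & \text{otherwise},\end{cases}
\end{equation*}
which is formally identical to Theorem \ref{202507171119}. I would then apply Lemma \ref{202508111732} with $F = I$ and $G = S^{d}$ to obtain an injective homomorphism between these two groups. In the vanishing case the claim is automatic; in the case $d = p^{l}$ any injection $\mathds{Z}/p \hookrightarrow \mathds{Z}/p$ is a bijection, so the induced map is an isomorphism. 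This settles the first assertion of the corollary.

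For the generator statement, Proposition \ref{202508111122k} provides, in the case $d = p^{l}$, an explicit short exact sequence $0 \to S^{d} \to E_{d} \to I \to 0$ in $\mathcal{F}(\mathsf{ab};\mathds{Z})$ whose class is a generator of $\mathrm{Ext}^{1}_{\mathcal{F}(\mathsf{ab};\mathds{Z})}(I, S^{d}) \cong \mathds{Z}/p$. Since the functor $- \circ \mathfrak{a}$ is exact \cite[Proposition 2.7]{PV}, applying it term-wise yields the extension $0 \to S^{d} \circ \mathfrak{a} \to E_{d} \circ \mathfrak{a} \to \mathfrak{a} \to 0$ in $\mathcal{F}(\mathsf{gr};\mathds{Z})$, whose class is exactly the image of $[E_{d}]$ under the isomorphism just established. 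Since an isomorphism of $\mathds{Z}/p$ sends a generator to a generator, the resulting extension generates the target group.

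The only real work lies outside the corollary itself and is carried out in Appendix \ref{202508141211k}: on the one hand, a self-contained version of the Franjou-Pirashvili calculation of $\mathrm{Ext}^{1}_{\mathcal{F}(\mathsf{ab};\mathds{Z})}(I, S^{d})$, and on the other hand, the construction of $E_{d}$ together with the verification that the corresponding class is nonzero (equivalently, that the extension does not split). I expect these to be the main obstacles; once they are in place, the matching of orders and the transfer of generators along the isomorphism are essentially formal.
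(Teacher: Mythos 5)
Your argument is correct and coincides with the paper's own proof: it combines the injectivity of the map induced by $- \circ \mathfrak{a}$ (Lemma \ref{202508111732}) with the identification of both sides as $\mathds{Z}/p$ (or $0$) via Theorem \ref{202507171119} and the Franjou--Pirashvili computation (Theorem \ref{202508111718k}), and then transfers the generator along the isomorphism. The only cosmetic point is that the nonsplitting (generator) property of $0 \to S^{d} \to E_{d} \to I \to 0$ is Proposition \ref{202508041518} rather than Proposition \ref{202508111122k}, which only constructs the functor $E_d$; you correctly flag this verification as the real appendix work.
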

\begin{proof}
    By Lemma  \ref{202508111732} we have an injective map:
   $  \mathrm{Ext}^{1}_{\mathcal{F}(\mathsf{ab};\mathds{Z})} ( I , S^{d} )\hookrightarrow \mathrm{Ext}^{1}_{\mathcal{F}(\mathsf{gr};\mathds{Z})} ( \mathfrak{a} , S^{d} \circ \mathfrak{a} ).$
    By Theorem \ref{202507171119} and Theorem \ref{202508111718k}, for $d=p^l$, this can be regarded as an injective map between $\mathds{Z}/p$, so that it gives an isomorphism.
\end{proof}

\begin{remark}
    Note that the analogue of Corollary \ref{202508041451} is not true for higher Ext-groups. To see this, we recommend the reader to compare Tables \ref{20250801529} and \ref{20250801525}.
\end{remark}

\section{Computation of $\mathrm{Ext}^{2}_{\mathcal{F}(\mathsf{gr};\mathds{Z})} ( \mathfrak{a}, S^{d} \circ \mathfrak{a} )$} \label{202508081426}

This section concerns the following theorem extending the computations given in \cite{arone2025polynomial} Table 2 (second column):

\begin{theorem}\label{202507041404}
For $d \in \nat^*$, we have:
    \begin{align*}
    \mathrm{Ext}^{2}_{\mathcal{F}(\mathsf{gr};\mathds{Z})} ( \mathfrak{a}, S^{d} \circ \mathfrak{a})= H^2 ( \AC) \cong \bigoplus_{J_d} \mathds{Z}/p .
    \end{align*}
    where $J_d =\{p \in \prim \mid \exists n \in \nat \exists m \in \nat^*
 , d=p^n (p^m+1) \}$.
 Furthermore, if $d= p^n(p^m +1)$ for $p \in J_d$,  $n \in \nat$ and $m \in \nat^*$, then the following 2-cochain gives a generator of the $p$-torsion part:
 $$
 -\sum^{p^n-1}_{k=1} \frac{1}{p}\binom{p^n}{k} \langle k ~p^n \rangle + \sum^{p^{n+m}-1}_{l=1} \frac{1}{p} \binom{p^{n+m}}{l} \langle p^n ~ (p^n+l) \rangle \in \ACk^{2} .
 $$
\end{theorem}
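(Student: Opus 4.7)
The plan is to reduce the integral computation of $H^2(\AC)$ to an arithmetic problem modulo $p^N$ via the universal coefficient theorem. By Lemma \ref{202510071626}, $H^2(\AC)$ is a finite abelian group, so it suffices to isolate its $p$-primary part $H^2(\AC)_{(p)}$ for each prime $p$. Since $\AC$ is a cochain complex of free abelian groups, the universal coefficient theorem provides the short exact sequence
$$0 \to H^1(\AC) \otimes \mathds{Z}/p^N \to H^1(\AC \otimes \mathds{Z}/p^N) \to \mathrm{Tor}_1(H^2(\AC), \mathds{Z}/p^N) \to 0,$$
and for $N$ larger than the $p$-exponent of $H^2(\AC)$ the rightmost Tor equals $H^2(\AC)_{(p)}$. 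Since $H^1(\AC)$ is explicitly known from Theorem \ref{202507171119}, the problem reduces to computing $H^1(\AC \otimes \mathds{Z}/p^N)$ for $N$ large.

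To make this computation feasible, I would replace $\AC \otimes \mathds{Z}/p^N$ by the more tractable auxiliary complex $\ACPN$ introduced in Section \ref{202507071356}, whose differentials are obtained from (\ref{202507011523}) by extracting the appropriate $p$-parts of the binomial coefficients. After proving the quasi-isomorphism $H^\bullet(\ACPN) \cong H^\bullet(\AC \otimes \mathds{Z}/p^N)$, one mimics Lemma \ref{202507101541} to describe $Z^1(\ACPN)$ as the set of tuples $(a_1, \ldots, a_{d-1}) \in (\mathds{Z}/p^N)^{d-1}$ satisfying an analogue of the system (\ref{202506301221}) in which each binomial coefficient is replaced by its $p$-part, and $B^1(\ACPN)$ as a single principal relation coming from $\delta^0$.

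The heart of the argument is the solution of this modified system. In the integral proof of Theorem \ref{202507171119}, the $r=1$ equations forced $a_k = \tfrac{1}{d}\binom{d}{k} a_1$ and no further solutions appeared. Modulo $p^N$, however, a jump in $v_p \binom{d}{k}$ at a break point $k = p^n$ can allow a new family of cocycles supported on indices $\geq p^n$ with $a_k = 0$ for $k < p^n$. Using Kummer's theorem together with the arithmetic recalled in Appendix \ref{202507071237}, I would show that such an extra cocycle exists if and only if the two resulting sub-systems, governed by $\binom{p^n}{\cdot}$ on $\{1, \ldots, p^n\}$ and by $\binom{d - p^n}{\cdot}$ on $\{p^n, \ldots, d\}$, are simultaneously compatible; a base-$p$ carry analysis forces $d - p^n$ to itself be a prime power, i.e.\ $d = p^n(p^m + 1)$ and $p \in J_d$. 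Counting modulo the old cocycles pulled back from $H^1(\AC) \otimes \mathds{Z}/p^N$ then shows that this new class has order exactly $p$.

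Feeding the result back through the UCT sequence yields $H^2(\AC)_{(p)} \cong \mathds{Z}/p$ for $p \in J_d$ and $0$ otherwise, which gives the main isomorphism after summing over primes. For the explicit generator, the two sums in the displayed $2$-cochain correspond precisely to the two halves $\{1, \ldots, p^n\}$ and $\{p^n, \ldots, d\}$ identified above, and the coefficients $\tfrac{1}{p}\binom{p^n}{k}$ and $\tfrac{1}{p}\binom{p^{n+m}}{l}$ are integers by Kummer's theorem applied to $v_p\binom{p^n}{k} = 1$ and $v_p\binom{p^{n+m}}{l} \geq 1$ for the relevant ranges. A direct computation of $\delta^2$ shows that the cross-terms from the two sums cancel so the cochain is a cocycle, and its reduction mod $p$ coincides with the nonzero class just produced in $H^1(\AC \otimes \mathds{Z}/p)$ lifted to $\mathrm{Tor}_1(H^2(\AC), \mathds{Z}/p)$. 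The main obstacle will be the third paragraph: the arithmetic case analysis, uniform in $N$, that relates $p$-adic valuations of binomial coefficients of $d$ to the existence of a genuinely new cocycle is delicate and constitutes the technical core of the proof.
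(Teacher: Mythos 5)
Your overall architecture is the paper's: finiteness of $H^2(\AC)$, the universal coefficient sequence with $G=\mathds{Z}/p^N$, the passage to the auxiliary complex $\ACPN$, and the identification of the generator as the $2$-cochain $y$ with $py=\delta^1(\langle p^n\rangle)$, i.e.\ as the image of the mod $p$ class of $\langle p^n\rangle$ under the composite $H^1(\AC\otimes\mathds{Z}/p)\cong\mathrm{Tor}(H^2(\AC),\mathds{Z}/p)\hookrightarrow H^2(\AC)$. Two of your closing remarks can be streamlined: there is no need to compute $\delta^2$ of the displayed cochain by hand, since $py=\delta^1(\langle p^n\rangle)$ together with the freeness of $\ACk^{3}$ already forces $\delta^2(y)=0$; and integrality of the coefficients only requires $v_p\binom{p^n}{k}\geq 1$ --- the equality $v_p\binom{p^n}{k}=1$ you invoke is false in general, as $v_p\binom{p^n}{k}=n-v_p(k)$.

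The genuine thin spot is your third paragraph, and it is not merely ``delicate'' but incomplete as a plan. The break-point analysis at $k=p^{n}$, $n=v_p(d)$, does establish the existence of the extra cocycle $(0,\dots,0,a_{p^n}=p^{N-1},0,\dots,0)$ exactly when every $\binom{p^n}{r}$ and every $\binom{d-p^n}{j}$ is divisible by $p$, i.e.\ when $d-p^n$ is itself a power of $p$, so $d=p^n(p^m+1)$. But computing $H^1(\ACPN)$ also requires the converse, exhaustiveness statement: for every $d$ and every $N$, modulo coboundaries and reductions of integral cocycles, no other classes occur --- and nothing in your sketch indicates how to get this. That is precisely where the paper spends its effort: for $p\nmid d$ a canonical-representative argument identifies $H^1(\ACPN)$ with the subgroup of $\mathds{Z}/p^N$ killed by $p^{\theta_p(d)}$ (Lemma \ref{202506231531}, Theorem \ref{202507081603}), and Proposition \ref{202507081602} evaluates $\theta_p(d)$; for $p\mid d$ one inducts on $v_p(d)$ using the injectivity of the division map $\bar{\phi_{d}}\colon H^1(\ACPN)\to H^1(\ACPNQ)$ (Lemma \ref{202507081743}), which is what rules out unexpected torsion when $d\notin A(p)\amalg B(p)$ and pins the order at exactly $p$ otherwise (Theorem \ref{202507091101}). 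If you quote that theorem, or supply an equivalent uniqueness argument to replace your ``base-$p$ carry analysis'', then the rest of your UCT bookkeeping --- including the $d=p^l$ case, where $H^1(\AC\otimes\mathds{Z}/p^N)$ is entirely absorbed by $H^1(\AC)\otimes\mathds{Z}/p^N$ (Theorem \ref{202507171119}, Corollary \ref{202507211020}) and contributes nothing to $\mathrm{Tor}$ --- coincides with the paper's proof in Section \ref{202507041422}.
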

Note that the set $J_d$ may contain more than one prime. For example, $J_{12} = \{ 2,3, 11 \}$ by $12=2^2(2+1)=3(3+1)=11+1$; and $J_{30} = \{ 3,5,29\}$ by $30=3(3^2+1)=5(5+1)=29+1$. We give in the following table the computation of $H^2 ( \AC)$ for $d\leq 36.$

\begin{minipage}{0.3\textwidth}
\footnotesize
\begin{tabular}{|c|c|}
\hline
$d$ & $H^2 ( \AC)$ \\
\hline
1 & $0$ \\
2 & $0$ \\
3 & $\mathds{Z}/2$ \\
4 & $\mathds{Z}/3$ \\
5 & $\mathds{Z}/2$ \\
6 & $\mathds{Z}/2 \oplus \mathds{Z}/5$ \\
7 & $0$ \\
8 & $\mathds{Z}/7$ \\
9 & $\mathds{Z}/2$ \\
10 & $\mathds{Z}/2 \oplus \mathds{Z} /3$ \\
11 & $0$ \\
12 & $\mathds{Z}/2 \oplus \mathds{Z}/3 \oplus \mathds{Z}/11$ \\
\hline
\end{tabular}
\end{minipage}%
\hspace{1.2cm}
\begin{minipage}{0.3\textwidth}
\footnotesize
\begin{tabular}{|c|c|}
\hline
$d$ & $H^2 ( \AC)$ \\
\hline
13 & $0$ \\
14 & $\mathds{Z}/13$ \\
15 & $0$ \\
16 & $0$ \\
17 & $\mathds{Z}/2$ \\
18 & $\mathds{Z}/2 \oplus \mathds{Z}/17$ \\
19 & $0$ \\
20 & $\mathds{Z}/2 \oplus \mathds{Z}/19$ \\
21 & $0$ \\
22 & $0$ \\
23 & $0$ \\
24 & $\mathds{Z}/2 \oplus \mathds{Z}/23$ \\
\hline
\end{tabular}
\end{minipage}%
\hspace{0.1cm}
\begin{minipage}{0.3\textwidth}
\footnotesize
\begin{tabular}{|c|c|}
\hline
$d$ & $H^2 ( \AC)$ \\
\hline
25 & $0$ \\
26 & $\mathds{Z}/5$ \\
27 & $0$ \\
28 & $\mathds{Z}/3$ \\
29 & $0$ \\
30 & $\mathds{Z}/3 \oplus \mathds{Z} /5 \oplus \mathds{Z}/29$ \\
31 & $0$ \\
32 & $\mathds{Z}/31$ \\
33 & $\mathds{Z}/2$ \\
34 & $\mathds{Z}/2$ \\
35 & $0$ \\
36 & $\mathds{Z}/2 \oplus \mathds{Z}/3$ \\
\hline
\end{tabular}
\end{minipage}

 \vspace{.5cm}
 
The proof of Theorem \ref{202507041404} is based on the universal coefficient theorem: for an abelian group $G$, we have the following exact sequence:
\begin{align} \label{202507171059}
    0 \to H^1(\AC) \otimes G \to  H^1(\AC \otimes G) \to \mathrm{Tor} ( H^2 (\AC), G) \to 0 .
\end{align}
If $G = \mathds{Z}/p^{N}$ for $p \in \prim$ and $N \in \nat^\ast$, then $\mathrm{Tor} ( H^2 (\AC), \mathds{Z}/p^{N})=\{x \in H^2 (\AC)\ | \ p^Nx=0\}.$

By Lemma \ref{202510071626}, $H^2 ( \AC) = \mathrm{Ext}^2 ( \mathfrak{a}, S^{d} \circ \mathfrak{a})$ is a finite group, so we can detect the $p$-torsion part of $\mathrm{Ext}^2 ( \mathfrak{a}, S^{d} \circ \mathfrak{a})$ by computing $H^1(\AC \otimes \mathds{Z}/p^{N})$ for all $N\in \nat^*$. Section \ref{202507041421} is devoted to this computation and the proof of Theorem \ref{202507041404} is postponed to Section \ref{202507041422}.

\section{Computation of $H^1(\AC \otimes \mathds{Z}/p^{N})$} \label{202507041421}
Using Section \ref{202507071042}, for $p\in \prim$ and $(d,N) \in (\nat^*)^2$, we have:

\begin{align} \label{202507071158}
   \AC\otimes \mathds{Z}/p^{N} :\quad  \ACk^{0}\otimes \mathds{Z}/p^{N}  \to \ACk^{1} \otimes \mathds{Z}/p^{N}  \to \cdots \to \ACk^{d-1} \otimes \mathds{Z}/p^{N} \to  0 \to \cdots 
\end{align}
where the differentials are given by $\delta^k \otimes \mathrm{id}_{\mathds{Z}/p^N}$.
For $k \in \nat$, $(\AC\otimes \mathds{Z}/p^{N})^k$ is the free $\mathds{Z}/p^{N}$-module generated by $\langle n_{1}n_{2} \cdots n_{k} \rangle$ where $n_j \in \nat$ such that $0 < n_{1} < n_{2} < \cdots < n_{k} < d$.

Analogously to Lemma \ref{202507101541}, we can prove that  a $1$-cochain $\sum^{d-1}_{i=1} a_i \ \langle i\rangle \in \ACk^1 \otimes \mathds{Z}/p^{N}$,  lies in the group of $1$-cocycles $Z^{1} (\AC \otimes \mathds{Z}/p^{N} )$ if and only if $(a_1, \ldots, a_{d-1}) \in (\mathds{Z}/p^{N})^{d-1}$ satisfies the following system of equations:
\begin{align}
    \binom{k}{r} a_k = \binom{d-r}{d-k} a_r, ~ 0<r<k<d .
\end{align}
So the group of $1$-cocycles $Z^{1} (\AC\otimes \mathds{Z}/p^{N} )$ is the group of solutions of this system in $(\mathds{Z}/p^{N})^{d-1}$.

The authors of this paper did not have a direct method for computing the solutions of this system. In order to compute the homology of the complex (\ref{202507071158}), we introduce the auxiliary  complex $\ACPN$ in Section \ref{202507071356}. The complex $\ACPN$ is obtained from $\AC$ using an automorphism of $ \ACk^k  \otimes \mathds{Z}/p^{N}$, so that the homology of $\ACPN$ is isomorphic to the homology of $\AC\otimes \mathds{Z}/p^{N}$.
We compute the homology of $\ACPN$ using arithmetic results given in Appendix \ref{202507071237}.

\subsection{The cochain complex $\ACPN$} \label{202507071356}
For $p\in \prim$ and $(d,N)\in (\nat^*)^2$, we will define the complex $\ACPN$.
The differentials of $\ACPN$ closely resemble those of $\AC$, with binomial coefficients replaced by their $p$-factors
(see Proposition \ref{202507041001}).
We define the complex $\ACPN$ by using the following isomorphism:
\begin{Defn}  \label{202507132227}   
   For $k\in \nat$, we consider the map $\varphi^{k} : \ACk^k  \otimes \mathds{Z}/p^{N} \to \ACk^k  \otimes \mathds{Z}/p^{N} $  defined by
    \begin{align*}
        \varphi^{k} ( \langle n_1 n_2 \cdots n_k \rangle ) {:=}  \left( \prod^{k+1}_{i=1}  \tilde{F}_{p,N}(n_i- n_{i-1} ) \right) \cdot \langle n_1 n_2 \cdots n_k\rangle
    \end{align*}
    where we put $n_0 = 0$ and $n_{k+1} = d$; and $\tilde{F}_{p,N}(n_i- n_{i-1} )$ is defined in Definition \ref{202507040934}.
\end{Defn}

Since $\tilde{F}_{p,N}(n_i- n_{i-1} )$ is coprime to $p$, $\varphi^{k}$ is an isomorphism.
\begin{Defn}    
    We define the cochain complex $\ACPN$ whose $k$-th component is given by $\ACPNk^{k} {:=} \ACk^k  \otimes \mathds{Z}/p^{N}$.
    The $k$-the differential is defined by
    $$\delta^{k}_{p} {:=} \varphi^{k+1} \circ ( \delta^{k} \otimes \mathrm{id}_{\mathds{Z}/p^{N}} ) \circ (\varphi^{k})^{-1} .$$
\end{Defn}
By definition of $\ACPN$, we have the following commutative diagram:
$$
\begin{tikzcd}
    \ACPNk^{k-1} \ar[r, "\delta^{k-1}_{p}"] & \ACPNk^{k} \ar[r, "\delta^{k}_{p}"] & \ACPNk^{k+1} \\
    \ACk^{k-1} \otimes \mathds{Z}/p^{N} \ar[r, "\delta^{k-1}\otimes\mathrm{id}_{\mathds{Z}/p^N}"'] \ar[u, "\varphi^{k-1}"] & \ACk^{k} \otimes \mathds{Z}/p^{N}  \ar[r, "\delta^{k}\otimes\mathrm{id}_{\mathds{Z}/p^N}"'] \ar[u, "\varphi^{k}"] & \ACk^{k+1} \otimes \mathds{Z}/p^{N} \ar[u, "\varphi^{k+1}"]
\end{tikzcd}
$$
In the following proposition, we give an explicit description of the differential in $\ACPN$.
\begin{prop} \label{202507041001}
    The differential of $\ACPN$ is computed as follows:
    \begin{align*}
        \delta^{k}_{p} ( \langle n_1n_2 \cdots n_{k} \rangle ) = \sum^{k+1}_{j=1} \sum^{n_{j}-1}_{m = n_{j-1} +1 } (-1)^{j} \binom{n_{j}-n_{j-1}}{m-n_{j-1}}_{p} \langle n_1 n_2 \cdots n_{j-1} m n_{j} \cdots n_{k} \rangle 
    \end{align*}
    where $\binom{n_{j}-n_{j-1}}{m-n_{j-1}}_{p}$ is the $p$-factor of the binomial coefficient $\binom{n_{j}-n_{j-1}}{m-n_{j-1}}$ defined in Definition \ref{202507040924}.
\end{prop}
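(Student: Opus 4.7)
The plan is to unwind the definition $\delta^{k}_{p} = \varphi^{k+1} \circ (\delta^{k} \otimes \mathrm{id}_{\mathds{Z}/p^{N}}) \circ (\varphi^{k})^{-1}$ directly on a basis element $\langle n_{1} \cdots n_{k} \rangle$ and track the coefficients through the three maps. Since $\varphi^{k}$ is diagonal in the given basis with scalar $\prod_{i=1}^{k+1}\tilde{F}_{p,N}(n_{i} - n_{i-1})$, and each such factor is coprime to $p$ and hence a unit in $\mathds{Z}/p^{N}$, applying $(\varphi^{k})^{-1}$ simply divides by this scalar.

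Next, I would apply $\delta^{k} \otimes \mathrm{id}_{\mathds{Z}/p^{N}}$ via formula~(\ref{202507011523}). This produces a sum over pairs $(j,m)$ with $1 \leq j \leq k+1$ and $n_{j-1} < m < n_{j}$, where $n_{0} = 0$ and $n_{k+1} = d$; the term indexed by $(j,m)$ lands in the basis element $\langle n_{1} \cdots n_{j-1}\, m\, n_{j} \cdots n_{k} \rangle$, multiplied by $(-1)^{j} \binom{n_{j} - n_{j-1}}{m - n_{j-1}}$ and the inverse scalar from the previous step. The key observation is that the list of consecutive differences for this refined tuple agrees with the list for $(n_{1}, \ldots, n_{k})$ except that the single gap $n_{j} - n_{j-1}$ is split into the two gaps $m - n_{j-1}$ and $n_{j} - m$, while all other gaps are unchanged.

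Consequently, when I apply $\varphi^{k+1}$ to each refined basis element, the $\tilde{F}_{p,N}$-factors indexed by $i \neq j$ cancel exactly with the corresponding factors in the inverse scalar, leaving the multiplier
$$
\frac{\tilde{F}_{p,N}(m - n_{j-1})\,\tilde{F}_{p,N}(n_{j} - m)}{\tilde{F}_{p,N}(n_{j} - n_{j-1})}.
$$
Thus the coefficient of $\langle n_{1} \cdots n_{j-1}\, m\, n_{j} \cdots n_{k} \rangle$ in $\delta^{k}_{p}(\langle n_{1} \cdots n_{k} \rangle)$ equals
$$
(-1)^{j} \binom{n_{j} - n_{j-1}}{m - n_{j-1}} \cdot \frac{\tilde{F}_{p,N}(m - n_{j-1})\,\tilde{F}_{p,N}(n_{j} - m)}{\tilde{F}_{p,N}(n_{j} - n_{j-1})}.
$$

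The final step is to invoke from Appendix~\ref{202507071237} the defining arithmetic identity relating $\tilde{F}_{p,N}$ to the $p$-factor of binomial coefficients, which asserts that for $0 \leq b \leq a$ one has $\binom{a}{b} \cdot \tilde{F}_{p,N}(b)\,\tilde{F}_{p,N}(a-b)/\tilde{F}_{p,N}(a) \equiv \binom{a}{b}_{p}$ modulo $p^{N}$. Applied with $a = n_{j} - n_{j-1}$ and $b = m - n_{j-1}$, this collapses the coefficient above to $(-1)^{j} \binom{n_{j} - n_{j-1}}{m - n_{j-1}}_{p}$ in $\mathds{Z}/p^{N}$, yielding the stated formula. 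The only real obstacle is the bookkeeping step identifying exactly which differences are preserved and which are split when passing from $(n_{1}, \ldots, n_{k})$ to its refinement; modulo that observation, the computation is a direct verification from the definitions together with the cited appendix identity.
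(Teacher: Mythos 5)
Your proposal is correct and follows essentially the same route as the paper: unwinding $\delta^{k}_{p}=\varphi^{k+1}\circ(\delta^{k}\otimes\mathrm{id})\circ(\varphi^{k})^{-1}$ on a basis element, observing that all $\tilde{F}_{p,N}$-factors cancel except the ratio $\tilde{F}_{p,N}(m-n_{j-1})\tilde{F}_{p,N}(n_{j}-m)/\tilde{F}_{p,N}(n_{j}-n_{j-1})$ attached to the split gap, and then applying Theorem \ref{202507021304} (in the equivalent rearranged form you state, legitimate since the $\tilde{F}_{p,N}$-values are units in $\mathds{Z}/p^{N}$) to identify the coefficient with $(-1)^{j}\binom{n_{j}-n_{j-1}}{m-n_{j-1}}_{p}$.
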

\begin{proof}
By (\ref{202507011523}) and Definition \ref{202507132227}, we have 
$$\delta^{k}_{p} ( \langle n_1n_2 \cdots n_{k} \rangle )=\sum^{k+1}_{j=1} \sum^{n_{j}-1}_{m = n_{j-1} +1 } (-1)^{j}\  C_{m,j} \ \binom{n_{j}-n_{j-1}}{m-n_{j-1}}  \langle n_1 \cdots n_{j-1} m n_{j} \cdots n_{k} \rangle $$
where 
\begin{align*}
  C_{m,j}&= \dfrac{\left( \prod^{j-1}_{i=1}\tilde{F}_{p,N}(n_i- n_{i-1} ) \right) \tilde{F}_{p,N}(m- n_{j-1} )  
\tilde{F}_{p,N}(n_j- m ) \left( \prod^{k+1}_{i=j+1}  \tilde{F}_{p,N}(n_i- n_{i-1} ) \right) }{\prod^{k+1}_{i=1}  \tilde{F}_{p,N}(n_i- n_{i-1} )}  \\
&=\dfrac{\tilde{F}_{p,N}(m- n_{j-1} )  
\tilde{F}_{p,N}(n_j- m )}{\tilde{F}_{p,N}(n_j- n_{j-1} ) }
\end{align*}
We deduce the statement using Theorem \ref{202507021304}.
\end{proof}

By definition, we have a cochain complex isomorphism $\varphi^\bullet : \AC \otimes \mathds{Z}/p^{N} \stackrel{\cong}{\to} \ACPN$. In particular, we have 
\begin{align} \label{202507171124}
H^\bullet (\AC \otimes \mathds{Z}/p^{N} ) \cong H^\bullet (\ACPN) .
\end{align}
Hence, we can replace the computation of $H^1 (\AC \otimes \mathds{Z}/p^{N} )$ with that of $H^1 (\ACPN)$.
In fact, the coefficients of the differentials in the complex $\ACPN$, computed in  Proposition \ref{202507041001}, are more tractable using elementary arithmetic lemmas.

\subsection{Computation of $H^1 (\ACPN)$}
To state the theorem that computes $H^1 (\ACPN)$, we first introduce the following notation.
\begin{notation}
    \label{202507141154}
    For $p \in \prim$, we denote
    \begin{align*}
        A(p) &{:=} \{ p^{n} ( p^{m} + 1) \in \nat^* \mid n \in \nat \mathrm{~and~} m \in \nat^* \} , \\
        B(p) &{:=} \{ p^{n} \in \nat^* \mid n \in \nat^*\}
    \end{align*}
\end{notation}

\begin{theorem} \label{202507091101}
    For $d, N \in \nat^\ast$ and $p \in \prim$, we have
    \begin{align*}
        H^1 ( \ACPN) \cong 
        \begin{cases}
            \mathds{Z}/p  & d \in A(p) \amalg B(p) , \\
            0 & \mathrm{otherwise} .
        \end{cases}
    \end{align*}
    Furthermore, for $n = v_{p} (d)$ the $p$-adic valuation, we have a generator as follows.
    \begin{itemize}
        \item If $d \in A(p)$, then $( 0, \cdots, 0, a_{p^n}= p^{N-1} , 0, \cdots, 0)$, where $p^{N-1}$ is in the $p^n$-th coordinate, gives a generator.
        \item If $d \in B(p)$, then $(\frac{1}{p} \binom{p^n}{1}_{p}, \frac{1}{p} \binom{p^n}{2}_{p}, \cdots, \frac{1}{p} \binom{p^n}{p^n-1}_{p})$ gives a generator.
    \end{itemize}
\end{theorem}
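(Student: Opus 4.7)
The plan is to describe $Z^1(\ACPN)$ and $B^1(\ACPN)$ explicitly and take the quotient. From the formula for $\delta^k_p$ in Proposition \ref{202507041001}, the analogue of Lemma \ref{202507101541} shows that a tuple $(a_1,\dots,a_{d-1}) \in (\mathds{Z}/p^N)^{d-1}$ represents a $1$-cocycle iff
\begin{align*}
\binom{k}{r}_p a_k \equiv \binom{d-r}{d-k}_p a_r \pmod{p^N}, \qquad 0<r<k<d,
\end{align*}
and that $B^1(\ACPN)$ is the cyclic subgroup generated by $\bigl(\binom{d}{1}_p,\ldots,\binom{d}{d-1}_p\bigr)$. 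Setting $n := v_p(d)$, the driving tool is the identity $v_p(\binom{d-r}{d-k}) = v_p(\binom{d}{k}) + v_p(\binom{k}{r}) - v_p(\binom{d}{r})$, which comes from $\binom{d}{k}\binom{k}{r}=\binom{d}{r}\binom{d-r}{k-r}$, together with the arithmetic facts of Appendix \ref{202507071237} (Kummer's theorem and the formula $v_p(\binom{p^n}{k}) = n - v_p(k)$).

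The first step is to exploit the $r=1$ subsystem, which reads $p^{v_p(k)} a_k = p^{v_p(k) + v_p(\binom{d}{k}) - n} a_1$. When $d \notin B(p)$, taking $k=p^n$ yields the pivot equation $p^n a_{p^n} = a_1$, since $v_p(\binom{d}{p^n}) = 0$: writing $d = p^n u$ with $p \nmid u$ and $u > 1$, the base-$p$ addition of $p^n$ and $p^n(u-1)$ has no carries because the lowest nonzero digit of $u-1$ lies in $\{0,\ldots,p-2\}$. This single equation already foreshadows both generators in the theorem: $a_{p^n} = p^{N-1}$ with $a_1 = 0$ in the $A(p)$-case (for $n \geq 1$; the $n=0$ subcase $d = p^m+1$ is handled similarly), and the full tuple produced by running the pivot at every $k$ in the $B(p)$-case.

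Next is a three-way case analysis. For $d = p^n \in B(p)$, the formula $v_p(\binom{p^n}{k}) = n - v_p(k)$ collapses every $r=1$ equation to $p^{v_p(k)} a_k = a_1$, so $a_1 \in p^{n-1}(\mathds{Z}/p^N)$; setting $a_1 = p^{n-1} b$, the distinguished solution $a_k = p^{n-1-v_p(k)} b$ satisfies all higher-$r$ equations by the valuation identity above, and comparison with the coboundary $(p^{n-v_p(k)})_k$ yields $H^1 = \mathds{Z}/p$ with the asserted generator. For $d = p^n(p^m+1) \in A(p)$, a direct verification shows that $a_{p^n} = p^{N-1}$ with $a_k = 0$ otherwise is a cocycle: the only nontrivial checks reduce to $v_p(\binom{p^n}{r}) = n - v_p(r) \geq 1$ for $0<r<p^n$ and $v_p(\binom{p^{n+m}}{d-k}) = (n+m) - v_p(d-k) \geq 1$ for $p^n<k<d$, both immediate from Kummer. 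Nontriviality follows by examining the $p^{n+m}$-th coordinate: any coboundary matching the $p^n$-th coordinate must have multiplier $p^{N-1}$, whose $p^{n+m}$-th coordinate equals $p^{N-1} \cdot \binom{d}{p^{n+m}}_p = p^{N-1} \neq 0$ (since $\binom{d}{p^{n+m}}_p = 1$ by the same Kummer computation), contradicting the cocycle value $0$ there.

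The hardest step is the residual case $d \notin A(p) \cup B(p)$, where I must show $H^1 = 0$. The plan is to use the pivot to force $a_1 \in p^n(\mathds{Z}/p^N)$, subtract the appropriate multiple of the coboundary generator to reduce to $a_1 = 0$, and then invoke the cross-equations ($r > 1$) to kill each residual $a_k$. The failure of the $A(p)$-pattern $d = p^n(p^m+1)$ should surface as at least one cross-equation whose $p$-adic valuation data forces the remaining cochain to vanish modulo $B^1$; making this obstruction explicit, uniformly in $d$, is the main technical challenge and is expected to rely on iterated use of Kummer's theorem together with the divisibility lemmas of Appendix \ref{202507071237}.
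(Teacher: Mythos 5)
Your setup is correct, and the elements you exhibit are genuine cocycles: the cocycle/coboundary description, the pivot equation $p^{n}a_{p^{n}}=a_{1}$ for $d\notin B(p)$, the Kummer computations showing that $(0,\dots,0,a_{p^{n}}=p^{N-1},0,\dots,0)$ is a cocycle when $d=p^{n}(p^{m}+1)$, and the argument (via the $p^{n+m}$-th coordinate) that it is not a coboundary are all sound. But these arguments only give \emph{lower} bounds on $H^{1}(\ACPN)$, whereas the theorem is mainly an upper-bound/vanishing statement, and those parts are not proved in your proposal: (i) for $d\in A(p)$ you never show that $H^{1}$ has order at most $p$, i.e.\ that every cocycle is, modulo the coboundary line, a multiple of your distinguished one; (ii) for $d=p^{n}\in B(p)$ you exhibit the family $a_{k}=p^{\,n-1-v_{p}(k)}b$ and ``compare with the coboundary'', but the $r=1$ equations determine $a_{k}$ only modulo $p^{N-v_{p}(k)}$, so showing that the cross-equations leave no solutions outside this family (equivalently that $Z^{1}$ is cyclic on the asserted generator) is exactly the substantive step, and it is asserted rather than proved; (iii) the case $d\notin A(p)\amalg B(p)$, where one must prove $H^{1}=0$ and which is the bulk of the theorem, is explicitly left as a plan (``expected to rely on iterated use of Kummer's theorem'').

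The paper closes these gaps by a mechanism absent from your sketch. For $p\nmid d$ it builds a canonical representative of each cohomology class (Lemma \ref{202506231531}), identifies $H^{1}$ with $\{x\in\mathds{Z}/p^{N}\mid p^{\theta_{p}(d)}x=0\}$ (Theorem \ref{202507081603}), and computes $\theta_{p}(d)$ by a digit-by-digit analysis (Proposition \ref{202507081602}); for $p\mid d$ it does not attempt a direct, uniform obstruction at all, but uses the restriction map $\phi_{d}$ to the complex for $d'=d/p$ and proves that the induced map on $H^{1}$ is injective for $d>p$ (Lemma \ref{202507081743}), so that all three cases follow by induction on $v_{p}(d)$ from the coprime case. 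The ``explicit obstruction, uniform in $d$'' that you defer is precisely what this induction is designed to avoid; without some substitute for it (or for the injectivity of $\phi_{d}$), your three-way case analysis establishes the generators but not the isomorphisms, so the heart of the theorem is still missing.
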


The proof of this theorem proceeds in two steps.
In Section \ref{202507141041} we study the case where $d$ is coprime to $p$.
Next, when $d$ is divided by $p$, we compare in Section \ref{202507212309} the cohomologies associated with $d$ and $d/p$, thereby reducing the computation to the first case.

\subsubsection{1-cocycles and 1-coboundaries of $\ACPN$}

As explained at the beginning of Section \ref{202507041421},
a $1$-cochain $\sum^{d-1}_{i=1} a_i \ \langle i\rangle \in \ACPNk^{1}$,  lies in the group of $1$-cocycles $Z^{1} (\ACPN )$ if and only if $(a_1, \ldots, a_{d-1}) \in (\mathds{Z}/p^{N})^{d-1}$ satisfies the following system of equations:
\begin{align} 
\label{202507081101}
    \binom{k}{r}_{p} a_k = \binom{d-r}{d-k}_{p} a_r, ~ 0<r<k<d .
\end{align}
In particular, if $k$ is coprime to $p$, then, by considering $r= 1$ in (\ref{202507081101}), we obtain
\begin{align} \label{202507081443}
    a_k = \binom{d-1}{d-k}_{p} a_1 ,
\end{align}
since we have $\binom{k}{1}_{p} = 1$.
It is clear that the group $B^1 (\ACPN)$ is generated by 
\begin{align*}
    \left( \binom{d}{1}_{p}, \binom{d}{2}_{p}, \cdots, \binom{d}{d-1}_{p} \right) \in (\mathds{Z} /p^{N})^{d-1} .
\end{align*}

For $d=p$, the computation of $ H^{1} ( \ACPN)$ is quite simple. It is given in the following example as a warm-up for the general case studied in the next sections. 
\begin{Example} \label{202507091151}
    Computation of $H^{1} ( \ACPN)$ for $d = p$.
    Let $(a_1, \cdots, a_{p-1}) \in Z^{1} ( \ACPN)$.
    In this case, all the numbers smaller than $p$ are coprime to $p$, so that, by (\ref{202507081443}), we have $a_k = \binom{p-1}{p-k}_{p} a_1= a_1$ for $0<k<p$.
    Therefore, $Z^{1} ( \ACPN)$ is generated by $(1,1, \ldots, 1)$. 
    
    On the other hand, by definition, $B^{1}( \ACPN)$ is generated by $(\binom{p}{1}_{p}, \binom{p}{2}_{p}, \cdots, \binom{p}{p-1}_{p} ) = (p, p, \cdots, p)$.
    Hence, we see that $H^{1} ( \ACPN ) \cong \mathds{Z}/p$ with the generator $(1,1, \cdots, 1) \in H^{1} ( \ACPN)$.
    This gives an example of Theorem \ref{202507091101}.
\end{Example}

\subsubsection{The case $p \nmid d$}
\label{202507141041}

We compute $H^{1} (\ACPN)$  in the case where $p$ does not divide $d$. The main result of this section is Theorem \ref{202507081603}. Using the results of Appendix \ref{202507071237}, we deduce Corollary \ref{202507081749}.

\begin{Lemma} \label{202506231531}
    Suppose that $p$ does not divide $d$.
    Every element of $H^{1} (\ACPN)$ has a unique representative $(b_1,\cdots, b_{d-1}) \in Z^{1}(\ACPN)$ such that
    \begin{align*}
        b_{k} =
        \begin{cases}
            \binom{d-1}{d-k}_{p} b_1 & \mathrm{if~} p \nmid k , \\
            0 & \mathrm{otherwise} .
        \end{cases}
    \end{align*}
\end{Lemma}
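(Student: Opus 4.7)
The plan is to establish the lemma by producing an explicit coboundary shift that places any given cocycle into the claimed form, and then to deduce uniqueness from evaluation at a distinguished index. First, I would dispatch the trivial case $d < p$: no $k \in \{1,\dots,d-1\}$ is divisible by $p$, so the second clause of the form is vacuous, and the $r=1$ cocycle relation (\ref{202507081443}) combined with $\binom{k}{1}_p = 1$ already exhibits every cocycle in the claimed form. For $d \geq p$, I would select a distinguished index $k_0$ as follows: writing $d = d_0 + d_1 p + \cdots + d_m p^m$ in base $p$ with $d_0 > 0$ (since $p \nmid d$), let $i_0 \geq 1$ be the smallest index with $d_{i_0} > 0$ (which exists because $d \geq p$) and set $k_0 = p^{i_0}$. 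Kummer's theorem applied to the carry-free addition $k_0 + (d-k_0) = d$ gives $v_p\binom{d}{k_0} = 0$, hence $\binom{d}{k_0}_p = 1$.

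For existence, given a cocycle $a$ I would set $\lambda = a_{k_0}$ and $b = a - \lambda B$, where $B = (\binom{d}{1}_p, \dots, \binom{d}{d-1}_p)$ generates $B^1(\ACPN)$. Then $b_{k_0} = 0$, and applying (\ref{202507081443}) to the cocycle $b$ yields $b_k = \binom{d-1}{d-k}_p b_1$ for $p \nmid k$, with $b_1 = a_1 - a_{k_0}$. The core of the argument is to show that $b_k = 0$ for every remaining $k$ with $p \mid k$. For $k < k_0$, the cocycle relation of $b$ at $(r, k_0) = (k, k_0)$ reads $\binom{k_0}{k}_p b_{k_0} = \binom{d-k}{d-k_0}_p b_k$, whence $\binom{d-k}{d-k_0}_p b_k = 0$. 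A digit-by-digit Kummer analysis, exploiting that $d - k_0$ has digit $d_0$ at position $0$, digit $0$ at positions $1,\dots,i_0-1$, and digit $d_{i_0}-1 \leq p-2$ at position $i_0$, shows that the base-$p$ addition $(d-k_0) + (k_0 - k) = d-k$ has no carry at any position, so $\binom{d-k}{d-k_0}_p = 1$ and $b_k = 0$. For $k > k_0$ with $p \mid k$, the cocycle relation at $(k_0, k)$ gives $\binom{k}{k_0}_p b_k = 0$, which forces $b_k = 0$ whenever the base-$p$ digit of $k$ at position $i_0$ is positive; the remaining indices are treated by chaining additional cocycle relations through the multiplicative identity $\binom{d-r}{d-k}\binom{d}{r} = \binom{d}{k}\binom{k}{r}$ translated to $p$-factors.

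For uniqueness: if $b, b'$ are two representatives of the claimed form lying in the same cohomology class, then $b - b' = \mu B$ for some $\mu \in \mathds{Z}/p^N$; evaluating at $k_0$ gives $0 = b_{k_0} - b'_{k_0} = \mu \binom{d}{k_0}_p = \mu$, so $b = b'$. The main obstacle is the base-$p$ digit analysis needed to certify $b_k = 0$ for all $k$ with $p \mid k$; the careful choice $k_0 = p^{i_0}$ with minimal positive $i_0$ satisfying $d_{i_0} > 0$ is what makes the Kummer computations carry-free in the bulk of the cases, and the residual indices with $\binom{k}{k_0}_p > 1$ require an auxiliary inductive argument chaining cocycle relations through intermediate indices.
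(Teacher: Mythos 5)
Your overall strategy is the same as the paper's: subtract a suitable multiple of the coboundary generator $\bigl(\binom{d}{1}_{p},\dots,\binom{d}{d-1}_{p}\bigr)$ so that the resulting cocycle vanishes at one distinguished $p$-divisible index $k_0$ with $\binom{d}{k_0}_{p}=1$, use (\ref{202507081443}) to pin down the coordinates coprime to $p$, kill the remaining coordinates via the cocycle relations (\ref{202507081101}) and Kummer-type computations, and deduce uniqueness by evaluating the ambiguity $\mu$ at $k_0$. The only real difference is the choice of $k_0$: you take $k_0=p^{i_0}$, the lowest nonzero $p$-power in the expansion of $d$, whereas the paper takes the largest multiple of $p$ below $d$, namely $pm_0=d-r_0$ with $r_0$ the residue of $d$ modulo $p$. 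The paper's choice makes everything uniform: every other multiple $pl$ satisfies $pl<pm_0$, and the single relation $\binom{pm_0}{pl}_{p}a_{pm_0}=\binom{d-pl}{r_0}_{p}a_{pl}$ together with $\binom{d-pl}{r_0}_{p}=1$ (Example \ref{202506251631}) and $\binom{pm_0}{pl}_{p}=\binom{d}{pl}_{p}$ (Lemma \ref{202506251622}) disposes of all $p$-divisible coordinates at once.

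With your choice of $k_0$ there is a genuine gap. For a multiple $k>k_0$ of $p$ whose base-$p$ digit at position $i_0$ vanishes, the relation $\binom{k}{k_0}_{p}b_k=0$ gives no information, and such indices are generically the bulk of the $p$-divisible ones; you only assert that they are ``treated by chaining additional cocycle relations through'' the identity $\binom{d-r}{d-k}\binom{d}{r}=\binom{d}{k}\binom{k}{r}$, without giving the argument. Since this is exactly where the content of the lemma sits, the proof is incomplete as written. It is, however, fixable inside your framework: the index $K:=d-d_0$ (the largest multiple of $p$ below $d$) has digit $d_{i_0}>0$ at position $i_0$, so your own criterion yields $b_K=0$; then for any other multiple $pl$ the relation at the pair $(pl,K)$ reads $\binom{K}{pl}_{p}b_K=\binom{d-pl}{d_0}_{p}b_{pl}$, and $\binom{d-pl}{d_0}_{p}=1$ because $d_0$ is the residue of $d-pl$ modulo $p$, forcing $b_{pl}=0$ --- i.e.\ you end up re-running the paper's argument with $K$ as distinguished index, which is why the paper starts there. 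Two smaller points: your existence computation for $k<k_0$ is correct, but in the case $d<p$ you only address existence; uniqueness is not discussed there, and indeed when $1<d<p$ every cocycle has the prescribed form while coboundaries are nonzero, so the normalization claim is genuinely delicate in that edge case (one the paper's proof also skirts, since it implicitly needs $pm_0\geq p$).
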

\begin{proof}
    We first prove the existence of the representative.
    By the hypothesis $p \nmid d$, there exist $r_0, m_0 \in \nat$ such that $0<r_0<p$ and $d = pm_0 + r_0$.
    Let $(a_1,\cdots, a_{d-1}) \in Z^{1} ( \ACPN)$.
    We will show that, if we set $b_k {:=} a_k - \binom{d}{k}_{p} a_{pm_0}$ for $0 < k < d$, then $(b_1, \cdots, b_{d-1})$ satisfies the conditions in the statement.
    Recall that $B^1 ( \ACPN)$ is generated by $( \binom{d}{1}_{p}  , \cdots, \binom{d}{d-1}_{p} )$.
    Hence, it is clear that $(b_1, \cdots, b_{d-1}) \in Z^1 (\ACPN)$ and it induces the same cohomology class with $(a_1,\cdots, a_{d-1})$.\\
    If $p \nmid k$, then we obtain $b_k = \binom{d-1}{d-k}_{p} b_1$ by the general assertion (\ref{202507081443}).\\
    We now prove $b_{pl} = 0$ for $1\leq l \leq m_0$.
    If $l < m_0$, then the 1-cocycle condition (\ref{202507081101}) on $(a_1,\cdots, a_{d-1})$ implies
    \begin{align*}
        \binom{pm_0}{pl}_{p} a_{pm_0} = \binom{d-pl}{d-pm_0}_{p} a_{pl} .
    \end{align*}
    Furthermore, this equation automatically holds for $l = m_0$.
    By arithmetic results given in Appendix \ref{202507071237}, we can compute these coefficients: by Example \ref{202506251631}, we have $\binom{d-pl}{d-pm_0}_{p} = \binom{d-pl}{r_0}_{p} = 1$; and, by Lemma \ref{202506251622}, we have $\binom{pm_0}{pl}_{p} = \binom{m_0}{l}_{p} = \binom{d}{pl}_{p}$.
    Thus, the above equation leads to $a_{pl} = \binom{d}{pl}_{p} a_{pm_0}$ which implies $b_{pl} = a_{pl} - \binom{d}{pl}_{p} a_{pm_0} = 0$.

    We now prove the uniqueness.
    Suppose that $(b^\prime_1,\cdots, b^\prime_{d-1}) \in Z^{1}(\ACPN)$ is another representative for the cohomology class of $(a_1, \cdots, a_{d-1})$.
    Then there exists $\lambda \in \mathds{Z} / p^{N}$ such that $b_k - b_k^\prime = \binom{d}{k}_{p} \lambda$ for $0<k<d$.
    Since $b_{pm_0} = 0 = b_{pm_0}^\prime$, we have $b_{pm_0} - b_{pm_0}^\prime = 0 = \binom{d}{pm_0}_{p} \lambda$.
    By Example \ref{202506251631}, we have $\binom{d}{pm_0}_{p} = 1$, so that we obtain $\lambda = 0$.
\end{proof}

Lemma \ref{202506231531} gives rise to a map 
$ H^1 ( \ACPN) \to Z^{1} (\ACPN)$ which is used in the following definition.

\begin{Defn}
    Suppose that $p$ does not divide $d$.
    We define a map $\psi : H^{1}(\ACPN) \to \mathds{Z}/p^{N}$ as the following composition
    \begin{align*}
        H^1 ( \ACPN) \to Z^{1} (\ACPN) \to \mathds{Z}/p^{N} 
    \end{align*}
    where the first map arises from  Lemma \ref{202506231531} and the second map is the projection $(b_1,\cdots, b_{d-1}) \mapsto b_1$.
\end{Defn}

Lemma \ref{202506231531} implies that the specific representative is uniquely determined by $b_1$.
Hence, the map  $\psi : H^{1}(\ACPN) \to \mathds{Z}/p^{N}$ is {\it injective}. To compute $H^1 ( \ACPN)$, we study the image of $\psi$ after giving some preliminaries lemmas.

\begin{Lemma} \label{202507081528}
    Let $d \in \nat$ such that $p \nmid d$ and $r,k \in \nat$ such that $0 < r< k <d$.
    \begin{enumerate}
        \item If $p \nmid r$, then we have $\binom{d-1}{d-r}_{p} \binom{d-r}{d-k}_{p}  = \binom{d}{k}_{p} \binom{k}{r}_{p}$.
        \item If $p\nmid k$, we have $\binom{d-1}{d-k}_{p} = \binom{d}{k}_{p}$.
    \end{enumerate}
\end{Lemma}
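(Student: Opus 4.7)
The plan is to derive both parts from classical integer-level identities for ordinary binomial coefficients by taking $p$-adic valuations. Recall from Definition~\ref{202507040924} that $\binom{n}{k}_p$ is the $p$-factor of $\binom{n}{k}$, so an equality of the form $\binom{n_1}{k_1}_p = \binom{n_2}{k_2}_p$ in $\mathds{Z}/p^{N}$ reduces to the equality of $p$-adic valuations $v_p(\binom{n_1}{k_1}) = v_p(\binom{n_2}{k_2})$ at the integer level.

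I would first establish part (2), which is the self-contained ingredient. The starting identity is
\[
k\binom{d}{k} = d\binom{d-1}{d-k},
\]
obtained from $\binom{d}{k} = \frac{d}{k}\binom{d-1}{k-1}$ together with $\binom{d-1}{k-1} = \binom{d-1}{d-k}$. Applying $v_p$ to this equation and using the hypotheses $v_p(k) = 0$ and $v_p(d) = 0$ gives $v_p(\binom{d}{k}) = v_p(\binom{d-1}{d-k})$, which translates into the asserted identity $\binom{d-1}{d-k}_p = \binom{d}{k}_p$.

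For part (1), I would use the classical trinomial identity
\[
\binom{d}{k}\binom{k}{r} = \binom{d}{r}\binom{d-r}{k-r} = \binom{d}{r}\binom{d-r}{d-k},
\]
proved combinatorially by choosing an $r$-subset inside a $k$-subset of a $d$-element set. Taking $v_p$ of both sides yields
\[
v_p\!\left(\binom{d}{k}\right) + v_p\!\left(\binom{k}{r}\right) = v_p\!\left(\binom{d}{r}\right) + v_p\!\left(\binom{d-r}{d-k}\right).
\]
Since $p \nmid r$ and $p \nmid d$, the already-proven part (2) applied to the pair $(d,r)$ in place of $(d,k)$ gives $v_p(\binom{d-1}{d-r}) = v_p(\binom{d}{r})$; substituting this into the displayed equation and re-exponentiating produces the claimed multiplicative identity between $p$-factors.

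I do not anticipate a serious obstacle, as both steps reduce to elementary manipulations. The only delicate point is the logical dependence: part (1) uses part (2) applied to $(d,r)$, and the hypothesis $p \nmid r$ in (1) is precisely what makes this invocation legal. The absence of a corresponding hypothesis on $k$ in (1) is explained by the fact that the factor $\binom{d-r}{d-k}_p$ appears identically on both sides of the asserted identity and therefore requires no additional control.
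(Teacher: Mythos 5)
Your proposal is correct and follows essentially the same route as the paper: both rest on the absorption identity $k\binom{d}{k}=d\binom{d-1}{d-k}$ and the trinomial identity $\binom{d}{k}\binom{k}{r}=\binom{d}{r}\binom{d-r}{d-k}$, and conclude by taking $p$-adic valuations and using that $d$, $r$ (resp.\ $k$) are coprime to $p$; the paper merely packages these as the relations $\tfrac{d}{r}A=C=\tfrac{d}{k}B$ and proves the two parts independently, whereas you prove (2) first and feed it into (1), which is an inessential reorganization. The only cosmetic slip is your phrase ``in $\mathds{Z}/p^{N}$'': the asserted equalities of $p$-factors are equalities of integers (powers of $p$), and your valuation argument in fact proves exactly that.
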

\begin{proof}
    Let $A = \binom{d-1}{d-r} \binom{d-r}{d-k}$, $B = \binom{d-1}{d-k} \binom{k}{r}$ and $C =\binom{d}{k} \binom{k}{r}$.
    Then it is easy to show $\frac{d}{r} A = C = \frac{d}{k}B$.
    If $p \nmid r$, then $dA = r C$ implies that $v_{p}(A) = v_{p}(C)$, equivalently (1) holds, since $d$ and $r$ are coprime to $p$.
    Likewise, if $p \nmid k$, then $kC = dB$ yields $\nu_p (C) = \nu_p (B)$, so that we obtain $\binom{d-1}{d-k}_{p} \binom{k}{r}_{p} = \binom{d}{k}_{p} \binom{k}{r}_{p}$, hence $\binom{d-1}{d-k}_{p} = \binom{d}{k}_{p}$.
\end{proof}

In the following, we use $\theta_{p} (d)$ introduced in Definition \ref{202506251636}.
\begin{Lemma} \label{202507081139}
    Suppose that $p$ does not divide $d$.
    For $x \in \mathds{Z}/p^N$, the following are equivalent:
    \begin{enumerate}
        \item If $p\mid k$ and $p \nmid r$, or if $p\nmid k$ and $p \mid r$, then we have $\binom{d}{k}_{p} \binom{k}{r}_{p} x = 0$.
        \item There exists $(b_1, \cdots, b_{d-1}) \in Z^1 ( \ACPN)$ that satisfies $b_1 =x $ and the conditions in Lemma \ref{202506231531}.
        \item We have $p^{\theta_{p}(d)}  x = 0$.
    \end{enumerate}
\end{Lemma}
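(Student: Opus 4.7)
The plan is to establish $(1) \Leftrightarrow (2)$ by a direct case analysis of the cocycle equations, and $(1) \Leftrightarrow (3)$ by identifying the common annihilator of the products $\binom{d}{k}_p \binom{k}{r}_p$ appearing in (1).

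For $(1) \Leftrightarrow (2)$, fix $x \in \mathds{Z}/p^N$ and define $b_k$ by the prescription of Lemma \ref{202506231531} with $b_1 = x$, so that $b_k = \binom{d-1}{d-k}_p x$ when $p \nmid k$ and $b_k = 0$ when $p \mid k$. The existence condition (2) amounts to verifying the cocycle identities $\binom{k}{r}_p b_k = \binom{d-r}{d-k}_p b_r$ for every $0 < r < k < d$. I split the pairs $(r,k)$ according to the four possibilities for the $p$-divisibility of $r$ and $k$. When both $r$ and $k$ are coprime to $p$, the two parts of Lemma \ref{202507081528} give $\binom{d-1}{d-k}_p = \binom{d}{k}_p$ and $\binom{d-1}{d-r}_p \binom{d-r}{d-k}_p = \binom{d}{k}_p \binom{k}{r}_p$, so both sides of the cocycle equation reduce to $\binom{d}{k}_p \binom{k}{r}_p x$ and the identity holds automatically. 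When both $r$ and $k$ are divisible by $p$, both sides vanish. In the two mixed cases exactly one of $b_r, b_k$ vanishes, and applying the appropriate part of Lemma \ref{202507081528} to the nonzero side reduces the remaining term to $\binom{d}{k}_p \binom{k}{r}_p x$; hence the cocycle equation becomes precisely the vanishing condition on $(r,k)$ appearing in (1). Taken together, the four cases show that $(b_1,\ldots,b_{d-1})$ is a cocycle if and only if (1) holds.

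For $(1) \Leftrightarrow (3)$, note that by construction each $\binom{d}{k}_p \binom{k}{r}_p$ is a power of $p$, so the annihilation statement in (1) is equivalent to requiring $p^{\nu_p(\binom{d}{k}_p \binom{k}{r}_p)} x = 0$ for every pair $(r,k)$ of mixed $p$-divisibility, hence to $p^{\mu} x = 0$ where $\mu$ is the minimum of these valuations. I then verify, using Definition \ref{202506251636} and the arithmetic results of Appendix \ref{202507071237}, that this minimum coincides with $\theta_p(d)$.

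The main obstacle is the final identification of $\mu$ with $\theta_p(d)$: one needs the explicit description of $\theta_p(d)$ from the Appendix and must then exhibit a specific pair $(r,k)$ of mixed $p$-divisibility realizing the minimum, which calls for a careful application of Kummer's theorem together with Lemma \ref{202506251622} and Example \ref{202506251631}. The rest of the argument is a mechanical matching of the four divisibility cases against the cocycle equations.
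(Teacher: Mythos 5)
Your argument is correct and follows essentially the same route as the paper: the equivalence of (1) and (2) is proved there by exactly the same four-fold case analysis on the $p$-divisibility of $r$ and $k$, using the two parts of Lemma \ref{202507081528}, and (1) is likewise rephrased as annihilation by the minimal power of $p$ among the products $\binom{d}{k}_{p}\binom{k}{r}_{p}$ over the mixed pairs. The only point to correct is your perceived ``main obstacle'': identifying that minimum with $\theta_{p}(d)$ is not a computation but the very content of Definition \ref{202506251636}, so no appeal to Kummer's theorem, Lemma \ref{202506251622} or Example \ref{202506251631} is needed here --- the explicit evaluation of $\theta_{p}(d)$ is Proposition \ref{202507081602} and enters only later, when Corollary \ref{202507081749} is deduced from Theorem \ref{202507081603}.
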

\begin{proof}
    We shall prove (1) starting from (2).
    Let $(b_1, \cdots, b_{d-1})$ be the representative given by Lemma \ref{202506231531} such that $b_1 =x$.
    Consider $r,k \in \nat$ such that $0 < r < k < d$.
    We assume that $p \mid k$ and $p \nmid r$.
    Then, by (\ref{202507081443}), we have $b_r = \binom{d-1}{d-r}_{p} b_1$.
    Hence, the 1-cocycle condition (\ref{202507081101}) implies $\binom{k}{r}_{p} b_k = \binom{d-r}{d-k}_{p} b_r =\binom{d-r}{d-k}_{p} \binom{d-1}{d-r}_{p} b_1$.
    By the conditions in Lemma \ref{202506231531}, we have $b_k = 0$, so that we obtain $\binom{d-r}{d-k}_{p} \binom{d-1}{d-r}_{p} b_1 = 0$.
    By Lemma \ref{202507081528}, this leads to $$\binom{d}{k}_{p} \binom{k}{r}_{p} b_1 = \binom{d-r}{d-k}_{p} \binom{d-1}{d-r}_{p} b_1 = 0 . $$ 
    The remaining part under the assumption $p \nmid k$ and $p \mid r$ is proved analogously by using Lemma \ref{202507081528}.

    We now prove (2) from (1).
    We define $(b_1, b_2 \cdots, b_{d-1}) \in (\mathds{Z}/p^{N})^{d-1}$ as follows:
    \begin{align*}
        b_k = 
        \begin{cases}
        \binom{d-1}{d-k}_{p} x & (p\nmid k), \\
        0 & (p\mid k) .
        \end{cases}
    \end{align*}
    Clearly, we have $b_1 = x$.
    To prove that $(b_1, \cdots, b_{d-1}) \in Z^{1} ( \ACPN)$, we verify below the 1-cocycle condition (\ref{202507081101}).
    Let $0 < r < k < d$.
    \begin{itemize}
        \item If $p \mid k$ and $p \mid r$, then the 1-cocycle condition automatically holds by $b_k = b_r = 0$.
        \item If $p\mid k$ and $p \nmid r$, then we have $\binom{k}{r}_{p} b_k = 0$.
        Moreover, Lemma \ref{202507081528} implies $\binom{d-r}{d-k}_{p} b_r = \binom{d-r}{d-k}_{p} \binom{d-1}{d-k}_{p} x = \binom{b}{k}_{p} \binom{k}{r}_{p} x = 0$ by the assumption on $x$.
        \item If $p \nmid k$ and $p \mid r$, then we have $\binom{d-r}{d-k}_{p} b_r = 0$.
        Likewise, by Lemma \ref{202507081528}, we see that $\binom{k}{r}_{p} b_k = \binom{d-1}{d-k}_{p} \binom{k}{r}_{p} x = \binom{d}{k}_{p} \binom{k}{r}_{p} x = 0$.
        \item If $p \nmid k$ and $p \nmid r$, then we have $\binom{k}{r}_{p} b_k = \binom{k}{r}_{p} \binom{d-1}{d-k}_{p} x$ and $\binom{d-r}{d-k}_{p} b_r = \binom{d-r}{d-k}_{p} \binom{d-1}{d-r}_{p} x$.
        These are the same by Lemma \ref{202507081528}.
    \end{itemize}
    
    By definition of $\theta_{p}(d)$, (1) is equivalent to (3).
\end{proof}

\begin{theorem} \label{202507081603}
    If $p$ does not divide $d$, then the map $\psi : H^{1}(\ACPN) \to \mathds{Z}/p^{N}$ induces an isomorphism
    $$
    H^{1}(\ACPN) \cong \{ x \in \mathds{Z}/p^N |~ p^{\theta_{p}(d)}  x = 0 \} .
    $$
    Here, $\theta_{p} (d)$ is defined in Definition \ref{202506251636}.
\end{theorem}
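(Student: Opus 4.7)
The plan is to derive the theorem by combining the two preceding lemmas, which have essentially done all of the work. First, Lemma \ref{202506231531} already establishes that each cohomology class in $H^{1}(\ACPN)$ has a unique representative of the prescribed form, so the map $\psi$, built from the selection $[c] \mapsto (b_1, \ldots, b_{d-1}) \mapsto b_1$, is automatically injective. It therefore remains only to identify its image.

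Next, I will unwind what it means for $x \in \mathds{Z}/p^{N}$ to lie in the image of $\psi$: by construction this asserts the existence of a $1$-cocycle $(b_1, \ldots, b_{d-1}) \in Z^{1}(\ACPN)$ with $b_1 = x$ satisfying the vanishing and formula conditions of Lemma \ref{202506231531}. This is verbatim condition (2) of Lemma \ref{202507081139}. Invoking the equivalence (2) $\Leftrightarrow$ (3) in that lemma, the image of $\psi$ is identified with $\{x \in \mathds{Z}/p^{N} \mid p^{\theta_{p}(d)} x = 0\}$, which combined with injectivity yields the stated isomorphism.

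There is no substantive obstacle at this stage: the real content has been absorbed into the arithmetic identities of Lemma \ref{202507081528} and the case-by-case verification carried out in Lemma \ref{202507081139}. The only point that merits a brief explicit check is that the existence statement in Lemma \ref{202507081139}(2) matches the one implicit in the definition of $\psi$, which is immediate from the way $\psi$ was set up via the distinguished representatives produced by Lemma \ref{202506231531}. Consequently the proof can be very short, amounting to citing the two lemmas in sequence.
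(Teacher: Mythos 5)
Your argument is correct and coincides with the paper's own proof: injectivity of $\psi$ follows from the uniqueness statement in Lemma \ref{202506231531}, and the image is identified with $\{x \in \mathds{Z}/p^{N} \mid p^{\theta_{p}(d)} x = 0\}$ via the equivalence of (2) and (3) in Lemma \ref{202507081139}. Nothing further is needed.
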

\begin{proof}
    By Lemma \ref{202506231531}, the map $\psi$ is injective.
    By Lemma \ref{202507081139}, the image of $\psi$ is the group in the right hand side.
\end{proof}

\begin{Corollary} \label{202507081749}
    If $p$ does not divide $d$, then we have
    \begin{align*}
        H^1 ( \ACPN ) \cong 
        \begin{cases}
            \mathds{Z}/p & \mathrm{if}\ d = p^{m} + 1 \ \mathrm{for}\  m \in \mathds{N}^* , \\
            0 & \mathrm{otherwise} .
        \end{cases}
    \end{align*}
    In the first case, the element $(p^{N-1} ,0, \cdots, 0) \in \ACPNk^{1}$ gives a generator of $H^1 ( \ACPN )$.
\end{Corollary}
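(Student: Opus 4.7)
The plan is to apply Theorem~\ref{202507081603} and identify the cyclic group $\{x \in \mathds{Z}/p^N \mid p^{\theta_p(d)} x = 0\}$ in the two cases of the statement; this reduces the problem to showing that $\theta_p(d) = 1$ when $d = p^m+1$ and $\theta_p(d) = 0$ otherwise (under the standing hypothesis $p \nmid d$). Unwinding Lemma~\ref{202507081139}, $\theta_p(d)$ is the minimum of $v_p\bigl(\binom{d}{k}\binom{k}{r}\bigr)$ over pairs $0 < r < k < d$ with exactly one of $r,k$ divisible by $p$. The subcase $d < p$ I would dispose of directly, since all coefficients $\binom{d}{k}_p$ equal $1$ and the cocycle system~(\ref{202507081443}) forces $(a_1, \ldots, a_{d-1}) = a_1 \cdot (1,\ldots,1) \in B^1$, whence $H^1 = 0$.

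For $d \geq p$ I would rely on Kummer's theorem: $v_p(\binom{d}{k}) = 0$ is equivalent to $k$ being digit-wise $\leq d$ in base $p$, and similarly for $\binom{k}{r}$. Writing $d = \sum_i d_i p^i$ with $d_0 \geq 1$ (since $p \nmid d$), a mixed pair achieving $v_p(\binom{d}{k}\binom{k}{r}) = 0$ must satisfy $p \nmid k$ and $p \mid r$, because $p \mid k$ would give $k_0 = 0$ and hence $r_0 = 0$, contradicting $p \nmid r$.

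When $d \neq p^m+1$, I would exhibit such a mixed pair explicitly by choosing an index $i \geq 1$ with $d_i \geq 1$ and setting $(r,k) = (p^i, 1+p^i)$; the digit inequalities make both binomials units modulo $p$, and the hypothesis $d \neq 1+p^i$ ensures $k < d$ (choosing $i$ appropriately among the nonzero higher digits of $d$, or using $d_0 \geq 2$ if necessary). This gives $\theta_p(d) = 0$. When $d = p^m+1$, the only $k$ with $0 < k < d$, $p \nmid k$, and $k$ digit-wise $\leq d$ is $k = 1$, which admits no valid $r$; hence $\theta_p(d) \geq 1$. For the reverse inequality, I would take the mixed pair $(r,k) = (p^{m-1}(p-1),\, p^{m-1}(p-1)+1)$ and verify, by a Kummer carry count in base $p$, that $v_p\bigl(\binom{p^m+1}{k}\bigr) + v_p\bigl(\binom{k}{r}\bigr) = 1$; this gives $\theta_p(d) = 1$.

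Finally, the generator $p^{N-1}$ of the resulting $\mathds{Z}/p$-torsion lifts via the uniqueness statement of Lemma~\ref{202506231531} to the cocycle $(b_1, \ldots, b_{d-1})$ with $b_1 = p^{N-1}$, $b_k = \binom{p^m}{p^m+1-k}_p \cdot p^{N-1}$ for $p \nmid k$, and $b_k = 0$ otherwise. Since $\binom{p^m}{j}$ is divisible by $p$ for every $0 < j < p^m$, all entries with $k > 1$ vanish modulo $p^N$, yielding the advertised generator $(p^{N-1}, 0, \ldots, 0)$. The main technical point is producing the witness $(r,k)$ for $\theta_p(p^m+1) \leq 1$, which depends on the small base-$p$ carry computation mentioned above.
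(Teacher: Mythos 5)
Your proposal is correct and takes essentially the same route as the paper: the paper's proof applies Proposition \ref{202507081602} to Theorem \ref{202507081603} and then extracts the generator $(p^{N-1},0,\cdots,0)$ exactly as you do, via Lemmas \ref{202507081139} and \ref{202506231531} together with the divisibility of $\binom{p^m}{j}$ by $p$ for $0<j<p^m$. The only difference is that you re-derive the value of $\theta_{p}(d)$ by hand (using the witness pairs $(p^i,p^i+1)$ and $(p^{m-1}(p-1),\,p^{m-1}(p-1)+1)$) rather than citing Proposition \ref{202507081602}, whose appendix proof is the same Kummer-type digit analysis with slightly different witnesses.
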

\begin{proof}
    It follows from applying Proposition \ref{202507081602} to Theorem \ref{202507081603}.
    If $d = p^m + 1$ for some $m \in \nat^*$, then $\theta_{p}(d) =1$, so that the group in the right hand side in Theorem \ref{202507081603} is generated by $p^{N-1} \in \mathds{Z}/p^{N}$.
    By Lemma \ref{202507081139}, there exists $(b_1, \cdots, b_{d-1}) \in Z^1 ( \ACPN)$ with $b_1 = p^{N-1}$ which is characterized by Lemma \ref{202506231531}.
    If $k >1$ and $p \nmid k$, then we have $b_k = \binom{d-1}{d-k}_{p} b_1 = \binom{p^{m}}{d-k}_{p} p^{N-1} = 0$ since $\binom{p^{m}}{d-k}$ is divided by $p$.
    Hence, $(b_1, \cdots, b_{d-1}) = (p^{N-1}, 0, \cdots, 0)$ induces a generator of $H^{1}(\ACPN)$.
\end{proof}

\subsubsection{Key lemmas for the case $p|d$} \label{202507212309}

We now present some key lemmas for the case where $p$ divides $d$. In this section, we assume that $p \mid d$ unless otherwise specified, and put $d^\prime {:=} d/p \in \nat^*$.

\begin{Defn} \label{202507251508}  
    We define a map $\phi_{d} : \ACPNk^{1} \to \ACPNQk^{1}$ by
    \begin{align*}
        \phi_{d} ( a_1, \cdots, a_{d-1}) {:=} (a_{p} , a_{2p}, \cdots, a_{(d^\prime -1)p}) .
    \end{align*}
    Here, we omit $p,N$ in the notation of $\phi_{d}$ since they are fixed in this section.
\end{Defn}

In the following lemmas, we study some basic properties of $\phi_{d}$.

\begin{Lemma} \label{202507141034}
    We have $\phi_{d} ( Z^{1} ( \ACPN) ) \subset Z^{1}( \ACPNQ)$.
\end{Lemma}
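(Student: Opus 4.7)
The plan is to verify directly that the restriction to multiples of $p$ of a 1-cocycle for parameter $d$ is a 1-cocycle for parameter $d' = d/p$, using the compatibility of $p$-factors of binomial coefficients with scaling by $p$.

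More precisely, I would start with $(a_1, \ldots, a_{d-1}) \in Z^1(\ACPN)$ and take indices $0 < j < l < d'$. Applying the defining system (\ref{202507081101}) for $\ACPN$ with $r = jp$ and $k = lp$, which lie in the valid range $0 < jp < lp < d$, yields
\begin{align*}
    \binom{lp}{jp}_{p} a_{lp} = \binom{d-jp}{d-lp}_{p} a_{jp} .
\end{align*}
The task is then to identify both $p$-factors appearing here with the corresponding $p$-factors for the $d'$-system. For the left-hand coefficient, Lemma \ref{202506251622} (cited in the proof of Lemma \ref{202506231531}) gives $\binom{lp}{jp}_{p} = \binom{l}{j}_{p}$. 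For the right-hand coefficient, I would first rewrite $d - jp = (d' - j) p$ and $d - lp = (d' - l) p$, and then apply the same lemma to conclude $\binom{(d'-j)p}{(d'-l)p}_{p} = \binom{d'-j}{d'-l}_{p}$.

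Substituting these equalities, the displayed equation becomes exactly the 1-cocycle condition for $\ACPNQ$ at indices $j < l$ in $\{1, \ldots, d'-1\}$, applied to the tuple $\phi_d(a_1, \ldots, a_{d-1}) = (a_p, a_{2p}, \ldots, a_{(d'-1)p})$. Since $j, l$ were arbitrary, this proves $\phi_{d}(Z^{1}(\ACPN)) \subset Z^{1}(\ACPNQ)$.

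There is no real obstacle here; the only thing to be careful about is the bookkeeping when rewriting $d - jp$ and $d - lp$ as $p$-multiples, which is needed in order to invoke the identity $\binom{ap}{bp}_p = \binom{a}{b}_p$ in the correct form. Everything else is an immediate substitution into the 1-cocycle equations.
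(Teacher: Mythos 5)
Your proposal is correct and follows essentially the same route as the paper: apply the $1$-cocycle condition (\ref{202507081101}) at the indices $jp<lp$ and identify the resulting $p$-factors with those of the $d'$-system via Lemma \ref{202506251622}. Your explicit rewriting of $d-jp=(d'-j)p$ and $d-lp=(d'-l)p$ is exactly the bookkeeping implicit in the paper's use of $\binom{d'-r}{d'-k}_{p}=\binom{d-pr}{d-pk}_{p}$, so there is nothing substantive to add.
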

\begin{proof}
    Let $(a_1, \cdots, a_{d-1}) \in Z^{1} ( \ACPN)$.
    To show that $(a_{p} , a_{2p}, \cdots, a_{(d/p-1)p}) \in Z^{1}(\ACPNQ)$, we should check the following:
    $$\binom{k}{r}_{p} a_k = \binom{d^\prime -r}{d^\prime -k}_{p} a_r, \quad 0 < r< k < d^\prime$$
    If we apply the 1-cocycle condition (\ref{202507081101}) to $pk$ and $pr$, we obtain this since we have $\binom{k}{r}_{p} = \binom{pk}{pr}_{p}$ and $\binom{d^\prime -r}{d^\prime -k}_{p} = \binom{d-pr}{d-pk}_{p}$ by Lemma \ref{202506251622}.
\end{proof}

\begin{Lemma}
    We have $\phi_{d} ( B^{1} (\ACPN) ) \subset B^{1}(\ACPNQ)$.
    Hence, by Lemma \ref{202507141034}, $\phi_{d}$ induces a homomorphism $\bar{\phi_{d}} : H^{1}( \ACPN) \to H^{1}( \ACPNQ)$.
\end{Lemma}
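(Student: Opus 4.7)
The plan is to exploit the fact that $B^{1}(\ACPN)$ is cyclic, with an explicit generator recorded earlier in the section, so the entire statement reduces to computing $\phi_{d}$ on a single vector. Recall that
\[
B^{1}(\ACPN) = \langle \omega_{d} \rangle, \qquad \omega_{d} = \left( \binom{d}{1}_{p}, \binom{d}{2}_{p}, \ldots, \binom{d}{d-1}_{p} \right),
\]
and similarly $B^{1}(\ACPNQ) = \langle \omega_{d^{\prime}} \rangle$ with $\omega_{d^{\prime}} = \bigl( \binom{d^{\prime}}{1}_{p}, \ldots, \binom{d^{\prime}}{d^{\prime}-1}_{p} \bigr)$.

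First, I would evaluate $\phi_{d}$ on $\omega_{d}$. By Definition \ref{202507251508}, $\phi_{d}$ keeps only the coordinates whose index is a multiple of $p$, so
\[
\phi_{d}(\omega_{d}) = \left( \binom{d}{p}_{p}, \binom{d}{2p}_{p}, \ldots, \binom{d}{(d^{\prime}-1)p}_{p} \right).
\]
Next, since $d = pd^{\prime}$, I would apply Lemma \ref{202506251622} to each coordinate: for $1 \leq k \leq d^{\prime}-1$,
\[
\binom{d}{kp}_{p} = \binom{pd^{\prime}}{pk}_{p} = \binom{d^{\prime}}{k}_{p}.
\]
This shows $\phi_{d}(\omega_{d}) = \omega_{d^{\prime}}$, hence $\phi_{d}(B^{1}(\ACPN)) \subset B^{1}(\ACPNQ)$ (in fact equality on generators).

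Finally, combining this with Lemma \ref{202507141034}, which gives $\phi_{d}(Z^{1}(\ACPN)) \subset Z^{1}(\ACPNQ)$, the universal property of the quotient produces a well-defined group homomorphism $\bar{\phi}_{d}: H^{1}(\ACPN) \to H^{1}(\ACPNQ)$. There is no real obstacle here: the only ingredient beyond bookkeeping is the identity $\binom{pa}{pb}_{p} = \binom{a}{b}_{p}$ from Lemma \ref{202506251622}, which is precisely the arithmetic content that makes $\phi_{d}$ compatible with the coboundary.
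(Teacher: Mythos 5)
Your proposal is correct and follows essentially the same argument as the paper: evaluate $\phi_{d}$ on the generator $\bigl( \binom{d}{1}_{p}, \ldots, \binom{d}{d-1}_{p} \bigr)$ of $B^{1}(\ACPN)$, use Lemma \ref{202506251622} to identify the image with the generator $\bigl( \binom{d^{\prime}}{1}_{p}, \ldots, \binom{d^{\prime}}{d^{\prime}-1}_{p} \bigr)$ of $B^{1}(\ACPNQ)$, and conclude via Lemma \ref{202507141034}. No issues.
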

\begin{proof}
    By definitions, $\phi_{d}$ assigns $(\binom{d}{p}_{p}, \binom{d}{2p}_{p}, \cdots, \binom{d}{(d^\prime -1)p}_{p})$ to the generator $(\binom{d}{1}_{p}, \cdots, \binom{d}{d-1}_{p})$ of $B^{1} (\ACPN)$.
    By Lemma \ref{202506251622}, this coincides with $(\binom{d^\prime}{1}_{p}, \binom{d^\prime}{2}_{p}, \cdots, \binom{d^\prime}{d^\prime-1}_{p})$, which is the generator of $B^{1} (\ACPNQ)$
\end{proof}

\begin{Lemma} \label{202507081743}
    If $d > p$, then the map $\bar{\phi_{d}} :H^{1}( \ACPN) \to H^{1} (\ACPNQ)$ is injective.
\end{Lemma}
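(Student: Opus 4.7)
The plan is to take a 1-cocycle $(a_1, \ldots, a_{d-1}) \in Z^1(\ACPN)$ whose class in $H^1(\ACPN)$ maps to zero under $\bar{\phi_d}$, and show that this class was already trivial.

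First I would reduce to a cocycle supported away from multiples of $p$. By hypothesis, there exists $\mu \in \mathds{Z}/p^N$ such that
$\phi_d(a_1, \ldots, a_{d-1}) = \mu \cdot \bigl(\binom{d^\prime}{1}_p, \ldots, \binom{d^\prime}{d^\prime-1}_p\bigr)$.
Using Lemma \ref{202506251622}, the $k$-th coordinate of the right-hand side equals $\mu \binom{d}{pk}_p$, which is exactly the $pk$-th coordinate of $\mu$ times the canonical generator of $B^1(\ACPN)$. Replacing the cochain $(a_1,\ldots,a_{d-1})$ by $(a_i - \mu \binom{d}{i}_p)_{0<i<d}$, I may therefore assume, without changing the cohomology class, that $a_{pk} = 0$ for every $0 < k < d^\prime$.

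The hypothesis $d > p$ now makes $p$ a legitimate index in the cocycle relations (\ref{202507081101}). For $0 < r < p$, choosing $k = p$ gives
$$\binom{p}{r}_p a_p = \binom{d-r}{d-p}_p a_r,$$
and since $a_p = 0$ while $\binom{d-r}{d-p}_p$ is a $p$-factor, hence coprime to $p$ and invertible in $\mathds{Z}/p^N$, this forces $a_r = 0$. Symmetrically, for $p < k < d$ with $p \nmid k$, choosing $r = p$ gives
$$\binom{k}{p}_p a_k = \binom{d-p}{d-k}_p a_p = 0,$$
and $\binom{k}{p}_p$ is again a unit in $\mathds{Z}/p^N$, forcing $a_k = 0$.

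Combined with the reduction step, we obtain $a_i = 0$ for all $0 < i < d$, so $[a] = 0$ in $H^1(\ACPN)$ and $\bar{\phi_d}$ is injective. The essential mechanism is that every coefficient $\binom{\cdot}{\cdot}_p$ in the differential of $\ACPN$ is by definition coprime to $p$ and therefore invertible modulo $p^N$; the role of the assumption $d > p$ is only to guarantee the existence of the pivot index $k = p$. No serious obstacle is anticipated beyond handling the two symmetric cases $r < p$ and $k > p$ above.
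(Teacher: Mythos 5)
Your overall strategy---subtract $\mu$ times the generator of $B^{1}(\ACPN)$ so that $a_{pl}=0$ for all $l$, then use the cocycle relations through the index $p$ (legitimate because $d>p$) to kill the remaining coordinates---is sound, and it is essentially the paper's argument translated by a coboundary. But your stated ``essential mechanism'' is a misreading of the construction: by Definition~\ref{202507040924}, $\binom{k}{r}_{p}=p^{v_{p}\binom{k}{r}}$ is the \emph{$p$-part} of the binomial coefficient, a power of $p$, not its prime-to-$p$ part, so it is a unit of $\mathds{Z}/p^{N}$ only when $v_{p}\binom{k}{r}=0$. (The prime-to-$p$ factors are exactly what the isomorphism $\varphi^{\bullet}$ strips away; if every coefficient of $\ACPN$ were invertible mod $p^{N}$, Theorem~\ref{202507091101} could not produce $\mathds{Z}/p$.) Your first case does go through, but for a reason you must actually supply: since $p\mid d$ throughout this subsection, for $0<r<p$ one has $\binom{d-r}{d-p}=\binom{d-r}{p-r}$, and adding $p-r$ and $d-p$ in base $p$ produces no carries, so $v_{p}\binom{d-r}{d-p}=0$ by Kummer's theorem (the same computation as in Example~\ref{202506251631}); only then is that coefficient invertible and $a_{r}=0$ follows.

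The second case genuinely fails: $\binom{k}{p}_{p}$ need not be a unit. For instance $p=2$, $d=6$, $k=5$ gives $\binom{5}{2}_{2}=2$, and your relation only yields $2a_{5}=0$, i.e.\ $a_{5}\in 2^{N-1}\mathds{Z}/2^{N}$, not $a_{5}=0$. The repair is immediate and lands you on the paper's route: your first case with $r=1$ already gives $a_{1}=0$ (because $\binom{d-1}{d-p}_{p}=1$), and then for every $k$ with $p\nmid k$ the relation (\ref{202507081443}) (the cocycle equation with $r=1$, where $\binom{k}{1}_{p}=1$ precisely because $p\nmid k$) gives $a_{k}=\binom{d-1}{d-k}_{p}\,a_{1}=0$. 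With that substitution your proof is complete and coincides, up to the initial translation by the coboundary, with the paper's computation, which instead shows directly that $a_{k}=\binom{d}{k}_{p}\lambda$ for all $k$ using Lemmas~\ref{202507081528} and~\ref{202506251622}.
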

\begin{proof}
    Let $v \in H^{1}( \ACPN)$ such that $\bar{\phi_{d}} (v) =0$.
    We choose $(a_1 \cdots , a_{d-1}) \in Z^{1}(\ACPN)$ a representative of $v$.
    By the hypothesis, we have $\phi_{d} (a_1 , \cdots, a_{d-1}) \in B^{1}( \ACPNQ)$.
    In other words, there exists $\lambda\in \mathds{Z}/p^{N}$ such that $a_{pl} = \binom{d^\prime}{l}_{p} \lambda$ for $0 < l < d^\prime$.
    We shall prove that $(a_1 , \cdots, a_{d-1}) \in B^{1}( \ACPN)$.
    By Lemma \ref{202506251622}, we have $\binom{d^\prime}{l}_{p} = \binom{d}{pl}_{p}$, so that we obtain $a_{pl} = \binom{d}{pl}_{p} \lambda$.
    To prove $(a_1, \cdots, a_{d-1}) \in B^{1} ( \ACPN)$, it is sufficient to prove that $a_{k} = \binom{d}{k}_{p} \lambda$ for $p \nmid k$.
    Let $k \in \nat$ such that $0<k<d$ and $p \nmid k$.
    By the hypothesis $p < d$, the 1-cocycle condition implies
    $$
    \binom{p}{1}_{p} a_p = \binom{d-1}{d-p}_{p}a_1 .
    $$
    By Example \ref{202506251631} and the choice of $\lambda$, this yields $a_1 = \binom{p}{1}_{p} a_p= \binom{p}{1}_{p} \binom{d}{p}_{p} \lambda$.
    Hence, by the first part of Lemma \ref{202507081528} and Lemma \ref{202506251622}, we obtain $a_1 = \binom{d-1}{d-p}_{p} \lambda = \lambda$.
    Applying this to (\ref{202507081443}), we obtain $a_k = \binom{d-1}{d-k}_{p} \lambda$.
    This coincides with $\binom{d}{k}_{p} \lambda$ by the second part of Lemma \ref{202507081528}.
\end{proof}
\subsubsection{Proof of Theorem \ref{202507091101}}

In this section we prove Theorem \ref{202507091101} by applying the previous lemmas. 
The strategy is to reduce the computation of $H^{1}( \ACPN)$ (for general $d$) to that of section \ref{202507141041} by iteratively applying the map $\bar{\phi_{d}} :H^{1}( \ACPN) \to H^{1} (\ACPNQ)$.
Indeed, the map is proved to be an isomorphism by a case analysis, depending on whether $d$ belongs to $A(p)$ or $B(p)$.
Here, the sets $A(p)$ and $B(p)$ are given in Notation \ref{202507141154}. 

\begin{Lemma} \label{202507171051}
    If $d \not\in A(p) \amalg B(p)$, then we have $H^{1}( \ACPN) \cong 0 $
\end{Lemma}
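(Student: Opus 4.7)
The plan is to reduce the computation of $H^{1}(\ACPN)$ to the case where $p$ does not divide the first index by iteratively applying the injective map $\bar{\phi_{d}}$ of Lemma \ref{202507081743}, and then to invoke Corollary \ref{202507081749}.

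First I would write $d = p^{n} d^{\prime}$ with $n \in \nat$ and $p \nmid d^{\prime}$, and extract the two arithmetic consequences of the hypothesis $d \notin A(p) \amalg B(p)$:
\begin{enumerate}
    \item[(i)] $d^{\prime} \neq p^{m}+1$ for every $m \in \nat^{*}$, for otherwise $d = p^{n}(p^{m}+1) \in A(p)$;
    \item[(ii)] if $n \geq 1$ then $d^{\prime} \geq 2$, for otherwise $d = p^{n} \in B(p)$.
\end{enumerate}
The case $d = 1$ (which corresponds to $n=0$, $d^{\prime} = 1$) is trivial since $\ACPNk^{k} = 0$ for all $k \geq 1$, so one may assume $d \geq 2$.

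Next I would iterate the map $\bar{\phi}$ of Lemma \ref{202507081743} exactly $n$ times. Setting $d^{(k)} {:=} d/p^{k} = p^{n-k} d^{\prime}$, condition (ii) gives $d^{(k)} \geq 2p > p$ for every $0 \leq k \leq n-1$, so Lemma \ref{202507081743} applies at each step and yields a composition of injective maps
\begin{align*}
    H^{1}(\ACPN) \hookrightarrow H^{1}(D^{\bullet}_{d^{(1)},p^{N}}) \hookrightarrow \cdots \hookrightarrow H^{1}(\ACPNQ).
\end{align*}

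Finally, since $p \nmid d^{\prime}$, Corollary \ref{202507081749} applies: by (i), $d^{\prime}$ is not of the form $p^{m}+1$, hence $H^{1}(\ACPNQ) = 0$. Composed with the injection above, this forces $H^{1}(\ACPN) = 0$, which is the claim. There is no real obstacle here; the argument is entirely a bookkeeping reduction exploiting the two lemmas, and the only care required is to verify that the hypothesis $d^{(k)} > p$ of Lemma \ref{202507081743} holds at every step, which is guaranteed by the exclusion of $B(p)$.
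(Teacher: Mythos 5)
Your proposal is correct and follows essentially the same route as the paper: the paper argues by induction on $v_p(d)$, using Corollary \ref{202507081749} in the base case and the injectivity of $\bar{\phi_{d}}$ from Lemma \ref{202507081743} in the inductive step, which is exactly your argument with the induction unrolled into an explicit composition of injections. Your explicit check that $d^{(k)} = p^{n-k}d' \geq 2p > p$ at each step (and that $d'\neq p^m+1$) is the same bookkeeping the paper performs via the observation that the complement of $A(p)\cup B(p)$ is preserved under division by $p$.
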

\begin{proof}
    We prove the statement by induction on $v_{p}(d)$.
    If $v_{p}(d) = 0$, then the assumption implies that there does not exist $m \in \nat^*$ such that $d = p^m +1$.
    Hence, the proof for this case follows from Corollary \ref{202507081749}.
    We now assume that the claim is true for $d^\prime \in \nat^*$ such that $v_{p} (d^\prime) = k \in \nat$.
      Let $d \in \nat^*$ such that $d \not\in A(p) \cup B(p)$ and $v_{p} (d) = k+1$.
    If we put $d^\prime = d/p$, then we have $v_{p} (d^\prime)= k$, and $d^\prime$ does not lie in $A(p) \cup B(p)$.
    Hence, the induction hypothesis implies that $H^1 ( \ACPNQ) \cong 0$.
    Furthermore, the assumption $v_{p} (d) = k+1$ implies that $d \geq p$, but $d \neq p$ since $d \not\in B(p)$.
    Hence, we can apply Lemma \ref{202507081743} and prove $H^{1} (\ACPN) \cong 0$ by the injectiveness of $\bar{\phi_{d}}$.
\end{proof}

\begin{Lemma} \label{202507171049}
    If $d \in A(p)$, then we have $H^1 ( \ACPN) \cong \mathds{Z}/p$.
    Furthermore, if $v_{p}(d) = n$, then $H^1 ( \ACPN)$ has a generator induced by $( 0, \cdots, 0, a_{p^n}= p^{N-1} , 0, \cdots, 0) \in \ACPNk^{1}$, where $p^{N-1}$ is in the $p^n$-th coordinate.
\end{Lemma}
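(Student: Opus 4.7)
The plan is to argue by induction on $n = v_{p}(d)$. For the base case $n = 0$, we have $d = p^{m}+1$ for some $m \in \nat^{*}$, and the statement reduces to Corollary \ref{202507081749}: $H^{1}(\ACPN) \cong \mathds{Z}/p$ generated by $(p^{N-1},0,\ldots,0)$, which is the vector with $p^{N-1}$ at index $p^{0}=1$, as claimed. For the inductive step, set $d^{\prime} = d/p = p^{n-1}(p^{m}+1)$, which still lies in $A(p)$ and satisfies $v_{p}(d^{\prime}) = n-1$. The induction hypothesis then gives $H^{1}(\ACPNQ) \cong \mathds{Z}/p$, generated by the class of the vector $v^{\prime} \in \ACPNQk^{1}$ whose $p^{n-1}$-th coordinate equals $p^{N-1}$ and whose other coordinates vanish.

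Since $n \geq 1$ forces $d = p^{n}(p^{m}+1) \geq 3p > p$, Lemma \ref{202507081743} yields an injection $\bar{\phi_{d}} : H^{1}(\ACPN) \hookrightarrow H^{1}(\ACPNQ) \cong \mathds{Z}/p$. In particular $H^{1}(\ACPN)$ is either $0$ or $\mathds{Z}/p$, and to close the induction it is enough to exhibit a class in $H^{1}(\ACPN)$ whose image under $\bar{\phi_{d}}$ is the generator supplied above.

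The natural candidate is the vector $v \in \ACPNk^{1}$ with $a_{p^{n}} = p^{N-1}$ and $a_{i} = 0$ for $i \neq p^{n}$. I would verify $v \in Z^{1}(\ACPN)$ using the system (\ref{202507081101}): all equations involving two indices both distinct from $p^{n}$ are trivially $0 = 0$; for $k = p^{n}$ and $0 < r < p^{n}$ the equation becomes $\binom{p^{n}}{r}_{p}\, p^{N-1} \equiv 0 \pmod{p^{N}}$, which holds because $p \mid \binom{p^{n}}{r}$ for every $0 < r < p^{n}$; for $r = p^{n}$ and $p^{n} < k < d$, using $d - p^{n} = p^{n+m}$, the equation becomes $\binom{p^{n+m}}{d-k}_{p}\, p^{N-1} \equiv 0 \pmod{p^{N}}$, which again holds since $0 < d - k < p^{n+m}$ forces $p \mid \binom{p^{n+m}}{d-k}$. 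By Definition \ref{202507251508}, $\phi_{d}(v) = v^{\prime}$, so $\bar{\phi_{d}}([v])$ is the induction-hypothesis generator of $H^{1}(\ACPNQ)$; hence $[v] \neq 0$ and $H^{1}(\ACPN) = \mathds{Z}/p \cdot [v]$.

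The only delicate step in this plan is the cocycle verification, which rests on the standard divisibility $p \mid \binom{p^{s}}{j}$ for $0 < j < p^{s}$. It is precisely this property, applied at both $s = n$ and $s = n+m$, that lets the extremely sparse vector $v$ satisfy (\ref{202507081101}); everything else is formal bookkeeping, combining the induction with the injectivity provided by Lemma \ref{202507081743}.
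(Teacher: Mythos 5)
Your proof is correct and follows essentially the same route as the paper: induction on $n=v_p(d)$ with base case Corollary \ref{202507081749}, then the injectivity of $\bar{\phi_{d}}$ from Lemma \ref{202507081743} combined with the observation that $\phi_{d}$ carries the sparse vector supported at coordinate $p^{n}$ to the inductive generator supported at coordinate $p^{n-1}$. Your explicit verification that this sparse vector satisfies the cocycle system (\ref{202507081101}), using $p \mid \binom{p^{n}}{r}$ and $p \mid \binom{p^{n+m}}{j}$, is a detail the paper leaves implicit, and you carry it out correctly.
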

\begin{proof}
    We fix $m \in \nat^*$, and prove the statement for $d= p^n ( p^m +1)$ following the induction on $n \in \nat$.
    The case $n = 0$ follows from Corollary \ref{202507081749}.
    We now assume that the statement is true for $n= i \in \nat$.
    By applying Lemma \ref{202507081743} to $d = p^{i+1} (p^m+1)$, we obtain an injective map $\bar{\phi_{d}}$.
    By the induction hypothesis, it suffices to prove that this is surjective.
    To do that, we construct a map $f: \mathds{Z}/p \to H^{1}( \ACPN)$ characterized by $f(1) = ( 0, \cdots, p^{N-1} , \cdots, 0)$ where $p^{N-1}$ is in the $p^{i+1}$-th coordinate.
    By the definition of $\bar{\phi_{d}}$, we have $\bar{\phi_{d}} (f (1)) = ( 0, \cdots, p^{N-1} , \cdots, 0)$ where $p^{N-1}$ is in the $p^i$-th coordinate.
    Hence, the composition $\bar{\phi_{d}} \circ f$ is surjective since  $\bar{\phi_{d}} (f (1))$ is a generator of $H^{1}( \ACPNQ)$ by the induction hypothesis.
    Thus, $\bar{\phi_{d}}$ is surjective.
\end{proof}

Let $n \in \nat^*$.
For $0<k<p^n$, by , we have $\frac{1}{p} \binom{p^n}{k}_{p} \in \mathds{Z}$ since $p$ divides $\binom{p^n}{k}$.
In the following statement, we regard $\frac{1}{p} \binom{p^n}{k}_{p} \in \mathds{Z}/p^{N}$ as usual.

\begin{Lemma} \label{202507171050}
    If $d \in B(p)$, then we have $H^1 ( \ACPN) \cong \mathds{Z}/p$.
    Furthermore, if $v_{p}(d) =n$, then $H^1 ( \ACPN)$ has a generator induced by $(\frac{1}{p} \binom{p^n}{1}_{p}, \frac{1}{p} \binom{p^n}{2}_{p}, \cdots, \frac{1}{p} \binom{p^n}{p^n-1}_{p} )\in \ACPNk^{1}$.
\end{Lemma}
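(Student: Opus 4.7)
The plan is to proceed by induction on $n \in \nat^*$ where $d = p^n$, in parallel with the induction used in the proof of Lemma \ref{202507171049}, but now exploiting that when $d$ is a prime power the $p$-adic valuations of the binomials $\binom{d}{k}$ admit the uniform closed form $v_p\bigl(\binom{p^n}{k}\bigr) = n - v_p(k)$ for $0 < k < p^n$.

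For the base case $n = 1$ the statement is essentially Example \ref{202507091151}: since $v_p\bigl(\binom{p}{k}\bigr) = 1$ for $0 < k < p$, one has $\binom{p}{k}_p = p$, so $\tfrac{1}{p}\binom{p}{k}_p = 1$, and the proposed cocycle reduces to the generator $(1, 1, \dots, 1)$ already computed there. For the inductive step, suppose $n \geq 2$ and that the statement holds for $d' = p^{n-1}$. Since $d = p^n > p$, Lemma \ref{202507081743} gives injectivity of $\bar{\phi}_d : H^1(\ACPN) \to H^1(\ACPNQ)$, and the inductive hypothesis identifies the target with $\mathds{Z}/p$ with explicit generator $\bigl(\tfrac{1}{p}\binom{p^{n-1}}{l}_p\bigr)_{0 < l < p^{n-1}}$. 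It therefore suffices to exhibit a class in $H^1(\ACPN)$ mapping to that generator under $\bar{\phi}_d$. I would take the candidate $c \in \ACPNk^{1}$ with coordinates $c_k := \tfrac{1}{p}\binom{p^n}{k}_p$, which is well-defined in $\mathds{Z}/p^N$ because $v_p\bigl(\binom{p^n}{k}\bigr) \geq 1$ for $0 < k < p^n$.

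Two things then remain to check. First, that $c \in Z^1(\ACPN)$: I would pass to $p$-factors in the classical identity $\binom{p^n}{k}\binom{k}{r} = \binom{p^n}{r}\binom{p^n-r}{k-r}$, which gives $\binom{p^n}{k}_p \binom{k}{r}_p = \binom{p^n}{r}_p \binom{p^n-r}{p^n-k}_p$ (using the symmetry $\binom{p^n-r}{k-r} = \binom{p^n-r}{p^n-k}$), and then divide by $p$ to recover the cocycle relation (\ref{202507081101}). Second, that $\phi_d(c)$ equals the inductive generator: by Definition \ref{202507251508} combined with Lemma \ref{202506251622}, one computes $\phi_d(c)_l = \tfrac{1}{p}\binom{p^n}{lp}_p = \tfrac{1}{p}\binom{p^{n-1}}{l}_p$. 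Hence $\bar{\phi}_d$ is surjective, hence an isomorphism, and $[c]$ is a generator of $H^1(\ACPN) \cong \mathds{Z}/p$.

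The main delicate point is the cocycle verification, which is clean only because $d$ is a prime power: the identity $v_p\bigl(\binom{p^n}{k}\bigr) = n - v_p(k)$ makes the multiplicativity of $p$-factors in the binomial identity above compatible with division by $p$ uniformly in $k$ and $r$. This is precisely what distinguishes the $B(p)$ case from the $A(p)$ case of Lemma \ref{202507171049}: there, no such closed-form cocycle exists across all coordinates, and the generator is forced to be supported at the single coordinate $p^n$ rather than given by a closed formula across all coordinates.
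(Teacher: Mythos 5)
Your proposal is correct and follows essentially the same route as the paper: induction on $n$ with base case Example \ref{202507091151}, injectivity of $\bar{\phi_{d}}$ from Lemma \ref{202507081743}, and surjectivity obtained by mapping the candidate class $\bigl(\tfrac{1}{p}\binom{p^n}{k}_p\bigr)_k$ to the inductive generator via Lemma \ref{202506251622}. The only difference is that you make explicit the cocycle verification (via the $p$-factors of the identity $\binom{p^n}{k}\binom{k}{r}=\binom{p^n}{r}\binom{p^n-r}{k-r}$), a step the paper leaves implicit, and your argument for it is correct.
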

\begin{proof}
    We present an inductive proof for $d= p^n$ with respect to $n \in \nat^\ast$.
    The case $n= 1$ follows from the direct computation in Example \ref{202507091151} since we have $\binom{p}{k}_{p} = p$ for $0<k<p$.
    We now suppose that the statement is true for $n=i \in \nat^\ast$.
    By applying Lemma \ref{202507081743} to $d=p^{i+1}$, we obtain an injective map $\bar{\phi_{d}}$ whose codomain is isomorphic to $\mathds{Z}/p$ by the induction hypothesis.
    Hence, it suffices to prove that $\bar{\phi_{d}}$ is surjective.
    We construct a map $h : \mathds{Z}/p \to H^1 ( \ACPN)$ characterized by 
    $h(1) = (\frac{1}{p} \binom{p^{i+1}}{1}_{p}, \frac{1}{p} \binom{p^{i+1}}{2}_{p}, \cdots, \frac{1}{p} \binom{p^{i+1}}{p^{i+1}-1}_{p} )$.
    Then, by Lemma \ref{202506251622}, one can obtain $(\bar{\phi_{p^{i+1}}} \circ h) (1) = \left( \frac{1}{p} \binom{p^{i}}{1}_{p}, \frac{1}{p} \binom{p^{i}}{2}_{p}, \cdots, \frac{1}{p} \binom{p^{i}}{p^{i}-1}_{p} \right)$, which is a generator of $H^1 ( \ACPN_{d= p^{i}})$ by the induction hypothesis.
    Thus, $\bar{\phi_{p^{i+1}}} \circ h$ is surjective, so that $\bar{\phi_{p^{i+1}}}$ is surjective.
\end{proof}

\begin{remark}
    Let $d \in \nat^*$ and $d^\prime = d/p$.
    Note that the map $\phi_{d} : \ACPNk^{1} \to \ACPNQk^{1}$ given in Definition \ref{202507251508} can be reformulated using the basis introduced in section \ref{202507071042} is the following way:
    \begin{align*}
        \phi_{d} ( \langle n \rangle ) =
        \begin{cases}
            \langle n/p \rangle & p \mid n , \\
            0 & p \nmid n .
        \end{cases}
    \end{align*}
    More generally, we can define a cochain map $\phi_{d} : \ACPNk^{\bullet} \to \ACPNQk^{\bullet}$ whose $k$-th component is defined by the formula
    \begin{align*}
        \phi_{d} ( \langle n_1 ~n_2\cdots n_k \rangle ) =
        \begin{cases}
            \langle (n_1/p)~ (n_2/p) \cdots (n_k/p) \rangle & \forall j~ (p \mid n_j) , \\
            0 & \mathrm{otherwise} .
        \end{cases}
    \end{align*}
    Note that Lemma \ref{202507081743} and the lemmas in this section imply that this cochain map induces an isomorphism on the first cohomology if $d > p$.
    We hope that the analogue of Lemma \ref{202507081743} holds in higher degrees.
\end{remark}

\begin{proof}[Proof of Theorem \ref{202507091101}]
Under the assumption $d \in A(p)$ ($d \in B(p)$, resp.), the theorem follows from Lemma \ref{202507171049} (Lemma \ref{202507171050}, resp.).
If $d \not\in A(p) \amalg B(p)$, then it is proved by Lemma \ref{202507171051}.
\end{proof}

\subsubsection{Consequences of Theorem \ref{202507091101}}
As a consequence of Theorem \ref{202507091101} we obtain the following result:

\begin{Corollary} \label{202507211020}
    For $d, N \in \nat^\ast$ and $p \in \prim$, we have
    \begin{align*}
        H^1 ( \AC \otimes \mathds{Z}/p^N) \cong 
        \begin{cases}
            \mathds{Z}/p  & d \in A(p) \amalg B(p) , \\
            0 & \mathrm{otherwise} .
        \end{cases}
    \end{align*}
    Furthermore, for $n = v_{p} (d)$, we have a generator as follows.
    \begin{itemize}
        \item If $d \in A(p)$, then $( 0, \cdots, 0, p^{N-1} , 0, \cdots, 0)$, where $p^{N-1}$ is in the $p^n$-th coordinate, gives a generator.
        \item If $d \in B(p)$, then $(\frac{1}{p} \binom{p^n}{1}, \frac{1}{p} \binom{p^n}{2}, \cdots, \frac{1}{p} \binom{p^n}{p^n-1})$ gives a generator.
    \end{itemize}
\end{Corollary}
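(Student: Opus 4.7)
The plan is to reduce the statement to Theorem \ref{202507091101} via the cochain complex isomorphism $\varphi^\bullet : \AC \otimes \mathds{Z}/p^N \xrightarrow{\cong} \ACPN$ constructed in Section \ref{202507071356}. Since $\varphi^\bullet$ is an isomorphism of cochain complexes, it induces an isomorphism $H^1(\AC \otimes \mathds{Z}/p^N) \cong H^1(\ACPN)$ as recorded in (\ref{202507171124}). Therefore the case distinction according to whether $d$ lies in $A(p) \amalg B(p)$, together with the conclusion that the group is either $\mathds{Z}/p$ or $0$, is immediate from Theorem \ref{202507091101}. All that remains is to transport the explicit generators along $(\varphi^1)^{-1}$ and check that they have the simple form stated.

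Recall from Definition \ref{202507132227} that $\varphi^{1}(\langle n_1 \rangle) = \tilde{F}_{p,N}(n_1)\, \tilde{F}_{p,N}(d - n_1) \cdot \langle n_1 \rangle$, and each factor $\tilde{F}_{p,N}(\cdot)$ is coprime to $p$ by construction, hence invertible in $\mathds{Z}/p^N$. Consequently, for $d \in A(p)$ with $n = v_p(d)$, the $\ACPN$-generator $p^{N-1} \langle p^n \rangle$ is the image under $\varphi^1$ of $p^{N-1} \cdot (\tilde{F}_{p,N}(p^n)\tilde{F}_{p,N}(d - p^n))^{-1} \langle p^n \rangle$, which differs from $p^{N-1} \langle p^n \rangle \in \AC \otimes \mathds{Z}/p^N$ by multiplication by a unit. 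Since both differ by a unit, they represent the same cyclic subgroup $\mathds{Z}/p \subset \mathds{Z}/p^N$, so $p^{N-1} \langle p^n \rangle$ is a valid generator in $\AC \otimes \mathds{Z}/p^N$ as claimed.

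For $d \in B(p)$ with $n = v_p(d)$, the comparison is slightly more involved since the stated generator replaces the $p$-factors $\binom{p^n}{k}_p$ by the actual binomial coefficients $\binom{p^n}{k}$. The bridge is the identity
\begin{align*}
\tilde{F}_{p,N}(d)\,\binom{d}{k}_{p} = \tilde{F}_{p,N}(k)\,\tilde{F}_{p,N}(d-k)\,\binom{d}{k},
\end{align*}
which is the multiplicative content of Theorem \ref{202507021304} (it already underlies the proof of Proposition \ref{202507041001}). Applied to $d = p^n$, this identity shows that
\begin{align*}
\varphi^{1}\Bigl(\tfrac{1}{\tilde{F}_{p,N}(d)} \sum_{k=1}^{p^n - 1} \tfrac{1}{p}\binom{p^n}{k} \langle k \rangle \Bigr) = \sum_{k=1}^{p^n - 1} \tfrac{1}{p}\binom{p^n}{k}_{p} \langle k \rangle ,
\end{align*}
and since $\tilde{F}_{p,N}(d)$ is a unit in $\mathds{Z}/p^N$, the element $\sum_{k=1}^{p^n - 1} \tfrac{1}{p}\binom{p^n}{k} \langle k \rangle$ generates the same $\mathds{Z}/p$ as the $\ACPN$-generator from Theorem \ref{202507091101}.

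The argument is essentially a transport of structure, and there is no substantial obstacle beyond bookkeeping; the only technical point to double-check is the $p$-factor identity used in the $d \in B(p)$ case, which has however already been verified in the derivation of Proposition \ref{202507041001}. In particular, one should also remark that $\tfrac{1}{p}\binom{p^n}{k} \in \mathds{Z}$ (since $p \mid \binom{p^n}{k}$ for $0 < k < p^n$), so the expression makes sense in $\mathds{Z}/p^N$ without ambiguity.
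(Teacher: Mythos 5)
Your proposal takes the same route as the paper's proof: reduce to Theorem \ref{202507091101} via the cochain isomorphism $\varphi^{\bullet}$ and (\ref{202507171124}), then transport the generators along $(\varphi^{1})^{-1}$, using that the values $\tilde{F}_{p,N}(\cdot)$ are units in $\mathds{Z}/p^{N}$; your treatment of the $A(p)$ case is exactly the paper's argument.

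In the $B(p)$ case your conclusion is correct, but the ``bridge identity'' as you state it, $\tilde{F}_{p,N}(d)\binom{d}{k}_{p} \equiv \tilde{F}_{p,N}(k)\tilde{F}_{p,N}(d-k)\binom{d}{k} \pmod{p^{N}}$, is strictly weaker than what your displayed $\varphi^{1}$-computation requires: to pass from it to the version with $\frac{1}{p}\binom{p^{n}}{k}$ and $\frac{1}{p}\binom{p^{n}}{k}_{p}$ you must divide a congruence by $p$, which a priori only yields the divided identity modulo $p^{N-1}$. That loss of precision is not harmless here: if the representative $w=(\frac{1}{p}\binom{p^{n}}{1},\dots,\frac{1}{p}\binom{p^{n}}{p^{n}-1})$ were only known to agree with a unit multiple of $(\varphi^{1})^{-1}(w_{p})$ modulo $p^{N-1}$, their difference $p^{N-1}u$ could carry a nontrivial class in $H^{1}(\AC\otimes\mathds{Z}/p^{N})\cong\mathds{Z}/p$, so one could not conclude that $[w]$ is a generator. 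The repair is immediate, and it is how the paper argues: apply Theorem \ref{202507021304} in its original form, i.e. to the prime-to-$p$ part $\binom{p^{n}}{k}/\binom{p^{n}}{k}_{p}$, which is congruent to $\tilde{F}_{p,N}(p^{n})\bigl(\tilde{F}_{p,N}(k)\tilde{F}_{p,N}(p^{n}-k)\bigr)^{-1}$ modulo $p^{N}$, and multiply this congruence by the \emph{integer} $\frac{1}{p}\binom{p^{n}}{k}_{p}$; this preserves the congruence modulo $p^{N}$ and gives
\begin{align*}
\bigl(\tilde{F}_{p,N}(k)\tilde{F}_{p,N}(p^{n}-k)\bigr)^{-1}\tfrac{1}{p}\binom{p^{n}}{k}_{p} \equiv \tilde{F}_{p,N}(p^{n})^{-1}\tfrac{1}{p}\binom{p^{n}}{k} \pmod{p^{N}} ,
\end{align*}
from which your displayed identity $(\varphi^{1})^{-1}(w_{p})=\tilde{F}_{p,N}(p^{n})^{-1}w$ and the conclusion follow. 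Apart from this one bookkeeping point, your argument coincides with the paper's proof.
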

\begin{proof}
    The first assertion follows from Theorem \ref{202507091101} and the isomorphism $\varphi^1 : H^1 ( \AC \otimes \mathds{Z}/p^{N}) \to H^1 ( \ACPN)$ given by   (\ref{202507171124}).
    Furthermore, via this isomorphism, the generators of $H^1 ( \ACPN)$ given in Theorem \ref{202507091101} are mapped to $H^1 ( \AC \otimes \mathds{Z}/p^N)$.
    We compute the corresponding generators below:
    \begin{itemize}
        \item Suppose that $d \in A(p)$.
        We choose $n \in \nat$ and $m \in \nat^*$ such that $d = p^n (p^m+1)$, and set $v = ( 0, \cdots, 0, a_{p^n}=p^{N-1} , 0, \cdots, 0) \in (\mathds{Z}/p^N)^{d-1}$.
        By Theorem \ref{202507091101}, the group $H^1 ( \AC \otimes \mathds{Z}/p^N)$ is generated by $(\varphi^1)^{-1}(v) = \lambda^{-1} v$ where $\lambda = \tilde{F}_{p,N} (p^n) \tilde{F}_{p,N} (d-p^n) \mod{p^N}$.
        Since $\lambda^{-1} \in \mathds{Z}/p^N$ is invertible, the group $H^1 ( \AC \otimes \mathds{Z}/p^N)$ is generated by the cohomology class induced by $v = ( 0, \cdots, 0, a_{p^n}=p^{N-1} , 0, \cdots, 0)$.
        
        \item Suppose that $d \in B(p)$.
        Let  $n \in \nat^*$ such that $d= p^n$.
        Let $w_p = (\frac{1}{p} \binom{p^n}{1}_{p}, \frac{1}{p} \binom{p^n}{2}_{p}, \cdots, \frac{1}{p} \binom{p^n}{p^n-1}_{p}) \in (\mathds{Z}/p^N)^{d-1}$ and $w = (\frac{1}{p} \binom{p^n}{1}, \frac{1}{p} \binom{p^n}{2}, \cdots, \frac{1}{p} \binom{p^n}{p^n-1})\in (\mathds{Z}/p^N)^{d-1}$.
        By Theorem \ref{202507091101}, the group $H^1 ( \AC \otimes \mathds{Z}/p^N)$ is generated by $(\varphi^1)^{-1}(w_p)$.
        We have $(\varphi^{1})^{-1} (w_p) = \tilde{F}_{p,N} (p^n)^{-1} w$ since, by Theorem \ref{202507021304}, we have
        \begin{align*}
            (\tilde{F}_{p,N}(k) \tilde{F}_{p,N}(p^n -k))^{-1} \frac{1}{p} \binom{p^n}{k}_{p} = \tilde{F}_{p,N} (p^n)^{-1} \frac{1}{p} \binom{p^n}{k} \mod{p^N} .
        \end{align*}
        Since $\tilde{F}_{p,N} (p^n)^{-1}$ is invertible in $\mathds{Z}/p^N$, the group $H^1 ( \AC \otimes \mathds{Z}/p^N)$ is generated by the cohomology class induced by $w$.
    \end{itemize}
\end{proof}

From the corollary, we obtain the following arithmetic result:

\begin{prop} \label{202507211003}
    The solutions  $(a_1, \ldots, a_{d-1}) \in (\mathds{Z}/p^{N})^{d-1}$ of the system of equations:
\begin{align*}
    \binom{k}{r} a_k = \binom{d-r}{d-k} a_r, \quad 0<r<k<d 
\end{align*}
is the $\mathds{Z}/p^{N}$-module generated by :
\begin{itemize}
    \item  \textbf{if $d=p^n(p^m+1)$ for some $n\in \nat$ and $m \in \nat^*$:}$\\
    ( 0, \cdots, 0, a_{p^n}=p^{N-1} , 0, \cdots, 0)$ and $
    \left( \binom{d}{1}, \binom{d}{2}, \cdots, \binom{d}{d-1} \right)$ ;
        \item \textbf{if $d=p^n$ for some $n\in \nat^*$:}\\
        $(\frac{1}{p} \binom{p^n}{1}, \frac{1}{p} \binom{p^n}{2}, \cdots, \frac{1}{p} \binom{p^n}{p^n-1})$;
        \item \textbf{otherwise :}
        $
    \left( \binom{d}{1}, \binom{d}{2}, \cdots, \binom{d}{d-1} \right)$.
\end{itemize}
\end{prop}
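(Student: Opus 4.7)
The plan is to recognize that the system of equations is exactly the $1$-cocycle condition for the cochain complex $\AC \otimes \mathds{Z}/p^{N}$. Indeed, a verbatim analogue of Lemma \ref{202507101541} over $\mathds{Z}/p^{N}$ (the same computation of $\delta^{1}\otimes \mathrm{id}_{\mathds{Z}/p^{N}}$ applied to $\sum a_i \langle i \rangle$) shows that a tuple $(a_1, \ldots, a_{d-1}) \in (\mathds{Z}/p^{N})^{d-1}$ solves the system if and only if the associated $1$-cochain lies in $Z^{1}(\AC \otimes \mathds{Z}/p^{N})$. So the solution module coincides with $Z^{1}(\AC \otimes \mathds{Z}/p^{N})$.

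Next, I would invoke the short exact sequence
\begin{align*}
    0 \to B^{1}(\AC \otimes \mathds{Z}/p^{N}) \to Z^{1}(\AC \otimes \mathds{Z}/p^{N}) \to H^{1}(\AC \otimes \mathds{Z}/p^{N}) \to 0 .
\end{align*}
From the definition of $\delta^{0}$ recalled in Section \ref{202507071042}, the coboundary group $B^{1}(\AC \otimes \mathds{Z}/p^{N})$ is cyclic, generated by $\left(\binom{d}{1}, \binom{d}{2}, \ldots, \binom{d}{d-1}\right)$. Corollary \ref{202507211020} then gives both $H^{1}(\AC \otimes \mathds{Z}/p^{N})$ and explicit lifts of its generators to $Z^{1}(\AC \otimes \mathds{Z}/p^{N})$ in the three cases. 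A generating set of $Z^{1}(\AC \otimes \mathds{Z}/p^{N})$ is therefore obtained by taking the coboundary generator together with any lift of a generator of $H^{1}$.

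I would then handle the three cases. When $d \notin A(p) \amalg B(p)$, Corollary \ref{202507211020} gives $H^{1}=0$, so $Z^{1} = B^{1}$ is generated by the coboundary alone. When $d \in A(p)$ with $v_p(d)=n$, the element $(0,\ldots,0,p^{N-1},0,\ldots,0)$ (with $p^{N-1}$ in the $p^{n}$-th coordinate) is already a cocycle by Corollary \ref{202507211020} and lifts the $H^{1}$-generator, so together with the coboundary generator it generates $Z^{1}$. When $d \in B(p)$, i.e. $d = p^{n}$, the element $w = \bigl(\tfrac{1}{p}\binom{p^{n}}{1}, \ldots, \tfrac{1}{p}\binom{p^{n}}{p^{n}-1}\bigr)$ lifts the $H^{1}$-generator. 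The crucial observation is that $p\cdot w = \bigl(\binom{p^{n}}{1}, \ldots, \binom{p^{n}}{p^{n}-1}\bigr)$ is precisely the coboundary generator for $d = p^{n}$, so the coboundary generator is redundant and $w$ alone generates $Z^{1}$.

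No step is truly hard: the translation from the system to $Z^{1}$ is a direct copy of Lemma \ref{202507101541}, and the main content is repackaging Corollary \ref{202507211020}. The only place that requires a brief separate verification is the case $d = p^{n}$, where one must check the identity $p \cdot w = $ coboundary generator to explain why only one generator suffices; this is an immediate consequence of cleaning denominators.
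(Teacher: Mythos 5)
Your proposal is correct and follows essentially the same route as the paper: identify the solution set with $Z^{1}(\AC \otimes \mathds{Z}/p^{N})$, generate it by the coboundary generator together with the explicit cocycle representatives of the $H^{1}$-generators from Corollary \ref{202507211020}, and in the case $d=p^{n}$ note that $p\cdot w$ equals the coboundary generator so that $w$ alone suffices. No gaps.
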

\begin{proof}
    As explained in the beginning of section \ref{202507041421}, we identify the $\mathds{Z}/p^{N}$-module generated by the solutions with $Z^1 (\AC \otimes \mathds{Z}/p^{N})$.
    By definition, the latter is generated by 1-coboundaries together with representatives of some generators of the cohomology $H^1 ( \AC \otimes \mathds{Z}/p^N)$.
    By definition of $\AC \otimes \mathds{Z}/p^{N}$ (see (\ref{202507071158})), its 1-coboundaries are generated by $\left( \binom{d}{1}, \binom{d}{2}, \cdots, \binom{d}{d-1} \right) \in (\mathds{Z}/p^N)^{d-1}$  regardless of $d$. 
    Combining this with a generator of $H^1 ( \AC \otimes \mathds{Z}/p^N)$ given by Corollary \ref{202507211020}, we obtain the results.
    In particular, if $d = p^n$ for some $n \in \nat^*$, the obvious relation
    
      $$
    p \left(\frac{1}{p} \binom{p^n}{1}, \frac{1}{p} \binom{p^n}{2}, \cdots, \frac{1}{p} \binom{p^n}{p^n-1} \right) =   \left( \binom{d}{1}, \binom{d}{2}, \cdots, \binom{d}{d-1} \right) 
    $$
    implies that the group $Z^1 ( \AC \otimes \mathds{Z}/p^N)$ is generated by $(\frac{1}{p} \binom{p^n}{1}, \frac{1}{p} \binom{p^n}{2}, \cdots, \frac{1}{p} \binom{p^n}{p^n-1})$.
\end{proof}

\begin{remark}
    One can obtain a similar statement by replacing $\mathds{Z}/p^N$ with $\mathds{Z}$.
    This follows from Theorem \ref{202507171119}.
\end{remark}

\section{Proof of Theorem \ref{202507041404}} \label{202507041422}

\begin{proof}[Proof of Theorem \ref{202507041404}]
We recall $A(p)$ and $B(p)$ introduced in Notation \ref{202507141154}.
Since $H^2 ( \AC) = \mathrm{Ext}^2 ( \mathfrak{a}, S^{d} \circ \mathfrak{a})$ is a finite abelian group by Lemma \ref{202510071626}, it suffices to prove that, for $p \in \prim$, if $d \in A(p)$, then the $p^N$-torsion part of $H^2 ( \AC)$ is $\mathds{Z}/p$ for   any $N \in \nat^*$; and is trivial otherwise.
Equivalently, we will prove the following:
\begin{align*}
    \mathrm{Tor} ( H^2 (\AC), \mathds{Z}/p^{N}) \cong 
    \begin{cases}
        \mathds{Z}/p & d \in A(p) , \\
        0 & \mathrm{otherwise}.
    \end{cases}
\end{align*}
To this end, we apply the exact sequence (\ref{202507171059}) for $G = \mathds{Z}/p^N$.
In the following, we identify $H^1( \AC \otimes \mathds{Z}/p^N)$ with $H^1 (\ACPN)$ based on (\ref{202507171124}).
\begin{itemize}
    \item If $d \in A(p)$, then $H^1 ( \AC ) \cong 0$ by Theorem \ref{202507171119}and $A(p) \cap B(p) = \emptyset$.
    Hence, the exact sequence (\ref{202507171059}) leads to $\mathrm{Tor} ( H^2 (\AC), \mathds{Z}/p^{N}) \cong H^1 ( \AC \otimes \mathds{Z}/p^N) \cong H^1(\ACPN)$.
    By Theorem \ref{202507091101}, we obtain $\mathrm{Tor} ( H^2 (\AC), \mathds{Z}/p^{N}) \cong \mathds{Z}/p$
    \item If $d \not\in A(p)\amalg B(p)$, then $H^1 ( \AC \otimes \mathds{Z}/p^N) \cong H^1 ( \ACPN ) \cong 0$ by Theorem \ref{202507091101}.
    Hence, the exactness of (\ref{202507171059}) yields $\mathrm{Tor} ( H^2 (\AC), \mathds{Z}/p^{N}) \cong 0$.
    \item We consider the case $d \in B(p)$.
    Then Theorem \ref{202507171119}implies $H^1 (\AC) \otimes \mathds{Z}/p^N \cong \mathds{Z}/p$.
    Furthermore, by Theorem \ref{202507091101}, we have $H^1 ( \AC \otimes \mathds{Z}/p^N) \cong H^1 (\ACPN) \cong \mathds{Z}/p$.
    Thus, any injective map from $H^1 (\AC) \otimes \mathds{Z}/p^N$ to $H^1 ( \AC \otimes \mathds{Z}/p^N)$ should be an isomorphism.
    Therefore, by (\ref{202507171059}), we obtain $\mathrm{Tor} ( H^2 (\AC), \mathds{Z}/p^{N}) \cong 0$.
\end{itemize}

All that remain is to give a generator of the $p$-torsion part of $H^2 ( \AC)$ when $d = p^n (p^m+1)$ for some $n \in \nat$ and $m \in \nat^*$.
To do that, we consider the following composite, which we denote by $\iota$:
\begin{align*}
    \iota: H^1(\AC \otimes \mathds{Z}/p) \stackrel{\cong}{\longrightarrow} \mathrm{Tor} ( H^2(\AC), \mathds{Z}/p) \hookrightarrow H^2 ( \AC) .
\end{align*}
where the isomorphism is deduced from the previous discussion for $N=1$, and the inclusion follows from the identification of the $\mathrm{Tor}(-,\mathds{Z}/p)$ with the $p$-torsion subgroup.
In particular, this map is characterized by the following condition: for $x \in Z^1 ( \AC \otimes \mathds{Z}/p)$ and $y \in Z^2 ( \AC)$, 
\begin{align*}
    \iota ( [x] ) = [ y ]  \mathrm{~if~and~only~if~} \delta^1 ( z ) = p y
\end{align*}
where $z$ is a lift of $x$ along the surjection $\ACk^{1} \to \ACk^{1} \otimes \mathds{Z}/p$. 
By Corollary \ref{202507211020}, we already have a generator of $H^1(\AC \otimes \mathds{Z}/p) \cong \mathds{Z}/p$ induced by $(0,\cdots,0,a_{p^n}=1,0,\cdots,0) = \langle p^n \rangle \in \ACk^{1}$.
By definition, we have
$$
 \delta^1 ( \langle p^n \rangle) = -\sum^{p^n-1}_{k=1} \binom{p^n}{k} \langle k ~ p^n \rangle + \sum^{p^{n+m}-1}_{l=1} \binom{p^{n+m}}{l} \langle p^n ~ (p^n+l) \rangle = py
$$
where we set
$$
y = -\sum^{p^n-1}_{k=1} \frac{1}{p}\binom{p^n}{k} \langle k ~ p^n \rangle + \sum^{p^{n+m}-1}_{l=1} \frac{1}{p} \binom{p^{n+m}}{l} \langle p^n ~ (p^n+l) \rangle \in \ACk^{2} .
$$
Note that, since $p \mid \binom{p^r}{k}$ for $r \in \nat^*$ and $0<k<r$, the element $y \in \ACk^2$ is well-defined.
Thus, we obtain $\iota( [\langle p^n \rangle \otimes (1\mod{p})]) = [y] \in H^2 ( \AC)$, and it is a generator of the $p$-torsion subgroup of $H^2 ( \AC)$.
\end{proof}

\section{Computation of $\mathrm{Ext}^{d-1}_{\mathcal{F}(\mathsf{gr};\mathds{Z})} ( \mathfrak{a}, S^{d} \circ \mathfrak{a} )$ and $\mathrm{Ext}^{d-2}_{\mathcal{F}(\mathsf{gr};\mathds{Z})} ( \mathfrak{a}, S^{d} \circ \mathfrak{a} )$} \label{202508081503}

In this section, we calculate $\mathrm{Ext}^{k}_{\mathcal{F}(\mathsf{gr};\mathds{Z})} ( \mathfrak{a}, S^{d} \circ \mathfrak{a})$ for $k \in \{ d-2,d-1\}$.
The content of this Section is inspired by communications with Gregory Arone who sent us a sketch of proof of Theorem \ref{202507311555K}      and Theorem \ref{202507181014}.
By Section \ref{202507071042}, this is equivalent with the computation of the last two nontrivial cohomologies of Arone's complex $H^{d-1} (\AC)$ and $H^{d-2} (\AC)$.
For this, we will write the basis of $\AC$ given in Section \ref{202507071042} in the following more convenient way:
\begin{notation}
    Let $k \in \nat$ such that $0 < k < d$.
    For strictly increasing numbers $m_1, m_2, \cdots, m_k \in \nat^\ast$ such that $0<m_j <d$, we consider the complement 
    $$\{ 1,2, \cdots, (d-1) \} \backslash \{ m_1, m_2, \cdots, m_k \} =  \{ n_1,n_2, \cdots,n_{d-k-1} \}.$$
    We denote the basis $\langle n_1n_2 \cdots n_{d-k-1} \rangle$ of $\ACk^{d-k-1}$ by $\langle m_1m_2\cdots m_k \rangle^{\mathrm{c}}$.
    For instance, $\langle ~\rangle^{\mathrm{c}}  = \langle 1~2~\cdots~(d-1) \rangle$ and $\langle 1~3 \rangle^{\mathrm{c}} = \langle 2~4~\cdots~ (d-1) \rangle$.
\end{notation}

We now return to the last two differentials of Arone's complex:
\begin{align*}
    \AC : \quad \cdots \to \ACk^{d-3} \stackrel{\delta^{d-3}}{\to} \ACk^{d-2} \stackrel{\delta^{d-2}}{\to} \ACk^{d-1} \to 0 \to \cdots .
\end{align*}
By using the above notation, one can obtain the alternative description of the differentials:
\begin{align*}
    \delta^{d-2} ( \langle m \rangle^{\mathrm{c}} ) &= 2 (-1)^{m}  \langle~\rangle^{\mathrm{c}} , \\
    \delta^{d-3} ( \langle m_1~m_2 \rangle^{\mathrm{c}} ) &=
    \begin{cases}
        2 \biggl( (-1)^{m_1}  \langle m_2 \rangle^{\mathrm{c}} - (-1)^{m_2}  \langle m_1 \rangle^{\mathrm{c}} \biggl) & (m_1+1 < m_2), \\
        3 (-1)^{m_1} \biggl( \langle m_1+1 \rangle^{\mathrm{c}} + \langle m_1 \rangle^{\mathrm{c}} \biggl) & (m_2=m_1+1) .
    \end{cases}
\end{align*}

\begin{theorem} \label{202507311555K}
    Let $d \in \nat^*$ such that $d \geq 2$. We have:
    \begin{align*}
        \mathrm{Ext}^{d-1}_{\mathcal{F}(\mathsf{gr};\mathds{Z})} ( \mathfrak{a}, S^{d} \circ \mathfrak{a}) = H^{d-1} ( \AC) \cong \mathds{Z}/2 .
    \end{align*}
    The generator is given by the cohomology class of $\langle ~ \rangle^{\mathrm{c}}$.
\end{theorem}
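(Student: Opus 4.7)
The plan is direct, since the top-degree component of $\AC$ has rank one. First I would observe that $\ACk^{d-1}$ is free of rank $\binom{d-1}{d-1}=1$, generated by the single basis element $\langle 1~2~\cdots~(d-1)\rangle = \langle~\rangle^{\mathrm{c}}$. Since $\ACk^{d}=0$, we have $H^{d-1}(\AC) = \ACk^{d-1}/\mathrm{Im}(\delta^{d-2})$, which is cyclic of order $n$, where $n = \gcd$ of the coefficients of $\langle~\rangle^{\mathrm{c}}$ in $\delta^{d-2}(\langle m\rangle^{\mathrm{c}})$ for $m=1,\ldots,d-1$.

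The key step is then to verify the alternative formula $\delta^{d-2}(\langle m\rangle^{\mathrm{c}}) = 2(-1)^m \langle~\rangle^{\mathrm{c}}$ from the general differential (\ref{202507011523}). For $\langle m\rangle^{\mathrm{c}}$, the underlying sequence $(n_1 < \cdots < n_{d-2})$ enumerates $\{1,\ldots,d-1\} \setminus \{m\}$, so consecutive differences (including $n_0=0$ and $n_{d-1}=d$) are all $1$ except at exactly one index $j_0$, where $n_{j_0}-n_{j_0-1}=2$. Since the target $\ACk^{d-1}$ is spanned only by $\langle 1~2~\cdots~(d-1)\rangle$, the only contributing $l$ in the double sum is $l = n_{j_0-1}+1 = m$, with binomial coefficient $\binom{2}{1}=2$ and sign $(-1)^{j_0}$. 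A quick case check across $m=1$, $m=d-1$, and $1<m<d-1$ shows $(-1)^{j_0} = (-1)^m$ in each case.

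Putting these together, $\mathrm{Im}(\delta^{d-2})$ is the subgroup of $\ACk^{d-1} \cong \mathds{Z}$ generated by $\{2(-1)^m\}_{m=1}^{d-1}$, which equals $2\mathds{Z}\langle~\rangle^{\mathrm{c}}$. Hence $H^{d-1}(\AC) \cong \mathds{Z}/2$, generated by the class of $\langle~\rangle^{\mathrm{c}}$. The degenerate case $d=2$ is handled directly: $\ACk^{1}=\mathds{Z}\langle 1\rangle$ and $\delta^0(\langle~\rangle)=-\binom{2}{1}\langle 1\rangle = -2\langle 1\rangle$, giving $H^1 \cong \mathds{Z}/2$ immediately. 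There is no real obstacle here; the only mild bookkeeping is confirming the sign $(-1)^{j_0}=(-1)^m$, which is routine.
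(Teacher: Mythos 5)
Your proposal is correct and follows essentially the same route as the paper: the paper likewise notes that $Z^{d-1}(\AC)=\ACk^{d-1}\cong\mathds{Z}$ (as $\ACk^{d}=0$) and that the image of $\delta^{d-2}$ is generated by $2\langle~\rangle^{\mathrm{c}}$, using the formula $\delta^{d-2}(\langle m\rangle^{\mathrm{c}})=2(-1)^m\langle~\rangle^{\mathrm{c}}$ stated just before the theorem. Your explicit verification of that formula from (\ref{202507011523}), including the sign check $(-1)^{j_0}=(-1)^m$ and the $d=2$ case, is accurate and simply fills in detail the paper leaves implicit.
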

\begin{proof}
 The image of $\delta^{d-2}$ is generated by $2 \langle~\rangle^{\mathrm{c}} \in \ACk^{d-1}$ and the kernel of $\delta^{d-1}$ is $\ACk^{d-1}$ which is equal to $\mathds{Z}$.
\end{proof}

In order to describe $H^{d-2}( \AC)$, for $d \geq 3$ and $2 \leq m \leq (d-1)$, we consider the following elements in $\ACk^{d-2}$:
$$u_m {:=} (-1)^m \langle m \rangle^{\mathrm{c}} + \langle 1 \rangle^{\mathrm{c}} .$$
These elements satisfy the following:
\begin{Lemma} \label{202507111139}
    \begin{enumerate}
        \item If $d \geq 3$, then we have $\delta^{d-2}(u_m)=0$
        \item If $d \geq 3$, then we have $3u_2 = -\delta^{d-3} ( \langle 1~2\rangle^{\mathrm{c}})$.
        \item If $d \geq 4$, then we have $u_3 = - \delta^{d-3} ( \langle 1~2 \rangle^{\mathrm{c}} + \langle 2~3\rangle^{\mathrm{c}} + \langle 1~3 \rangle^{\mathrm{c}})$.
        \item If $m \geq 3$, then we have $u_{m+1} - u_{m} = (-1)^{m+1} \delta^{d-3} ( \langle 1 ~m \rangle^{\mathrm{c}} + \langle 1 ~(m+1)\rangle^{\mathrm{c}} + (-1)^m \langle m ~(m+1) \rangle^{\mathrm{c}} )$.
    \end{enumerate}
\end{Lemma}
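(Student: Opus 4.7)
The plan is to prove each of the four assertions by direct calculation, applying the explicit formulas for $\delta^{d-2}$ and $\delta^{d-3}$ recalled immediately before the statement. The only subtlety is carefully distinguishing whether a pair $(m_1, m_2)$ is consecutive ($m_2 = m_1+1$) or not, since the formula for $\delta^{d-3}$ branches accordingly; once the appropriate case is selected, every verification reduces to sign bookkeeping.

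For part (1), I would observe that $\delta^{d-2}$ sends both $\langle m \rangle^{\mathrm{c}}$ and $\langle 1 \rangle^{\mathrm{c}}$ to multiples of $\langle~\rangle^{\mathrm{c}}$, with coefficients $2(-1)^m$ and $-2$ respectively; multiplying the first by $(-1)^m$ shows that the contributions from the two summands of $u_m$ cancel. For part (2), the pair $(1,2)$ is consecutive, so the second case of the $\delta^{d-3}$ formula gives $\delta^{d-3}(\langle 1~2\rangle^{\mathrm{c}}) = -3(\langle 1\rangle^{\mathrm{c}} + \langle 2\rangle^{\mathrm{c}})$, which is exactly $-3u_2$ since $u_2 = \langle 2\rangle^{\mathrm{c}} + \langle 1\rangle^{\mathrm{c}}$.

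For part (3), I would compute $\delta^{d-3}$ separately on each of the three pieces: the pairs $(1,2)$ and $(2,3)$ are consecutive (so contribute $\mp 3$ times the corresponding pair of $\langle\cdot\rangle^{\mathrm{c}}$'s), while $(1,3)$ is non-consecutive and yields $2(-\langle 3\rangle^{\mathrm{c}} + \langle 1\rangle^{\mathrm{c}})$. Adding these, the $\langle 2\rangle^{\mathrm{c}}$ contributions cancel, leaving $\langle 3\rangle^{\mathrm{c}} - \langle 1\rangle^{\mathrm{c}} = -u_3$.

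Part (4) is the most structurally interesting. The hypothesis $m \geq 3$ guarantees that both pairs $(1,m)$ and $(1,m+1)$ are non-consecutive, so the first case of $\delta^{d-3}$ applies to them and produces coefficients $-2$ on $\langle m\rangle^{\mathrm{c}}$, $\langle m+1\rangle^{\mathrm{c}}$ together with coefficients $-2(-1)^m$ and $-2(-1)^{m+1}$ on $\langle 1\rangle^{\mathrm{c}}$; whereas $(m, m+1)$ is consecutive and yields $3(-1)^m(\langle m\rangle^{\mathrm{c}} + \langle m+1\rangle^{\mathrm{c}})$. The $\langle 1\rangle^{\mathrm{c}}$ contributions cancel because $(-1)^m + (-1)^{m+1} = 0$, and after multiplying the third term by the prescribed $(-1)^m$ the remaining coefficients combine as $-2 + 3 = 1$, giving $(-1)^{m+1}(\langle m\rangle^{\mathrm{c}} + \langle m+1\rangle^{\mathrm{c}}) = u_{m+1} - u_m$. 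There is no genuine obstacle; the verification is entirely mechanical, and the main care needed is to invoke the right branch of the $\delta^{d-3}$ formula for each pair.
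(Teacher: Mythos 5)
Your computations are correct and match what the paper intends: the paper's proof simply declares all four identities "immediate from the definitions," and your verification is exactly that direct check using the displayed formulas for $\delta^{d-2}$ and $\delta^{d-3}$, with the consecutive/non-consecutive case distinction handled correctly in each part.
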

\begin{proof}
    These are immediate from the definitions.
\end{proof}

\begin{theorem} \label{202507181014}
    Let $d \in \nat^*$ such that $d \geq 2$. We have:
    \begin{align*}
        \mathrm{Ext}^{d-2}_{\mathcal{F}(\mathsf{gr};\mathds{Z})} ( \mathfrak{a}, S^{d} \circ \mathfrak{a}) =H^{d-2}( \AC) \cong
        \begin{cases}
            \mathds{Z}/3 & d \in \{3,4 \}, \\
            0 & \mathrm{otherwise} .
        \end{cases}
    \end{align*}
    In the first case, $u_2 = \langle 2 \rangle^{\mathrm{c}} + \langle 1 \rangle^{\mathrm{c}}$ gives a generator.
\end{theorem}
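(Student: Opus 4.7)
The plan is to compute $H^{d-2}(\AC) = \ker \delta^{d-2} / \mathrm{im}\, \delta^{d-3}$ by first pinning down the kernel and then using Lemma \ref{202507111139} (together with one additional computation) to describe the coboundaries inside it. Since $\delta^{d-2}(\langle m\rangle^{\mathrm{c}}) = 2(-1)^m \langle~\rangle^{\mathrm{c}}$ and $\ACk^{d-1} \cong \mathds{Z}$, the kernel is the rank-$(d-2)$ subgroup of $\ACk^{d-2}$ consisting of alternating combinations summing to zero. I would first verify that $\{u_m \mid 2 \leq m \leq d-1\}$ is a $\mathds{Z}$-basis of $\ker \delta^{d-2}$: each $u_m$ lies in the kernel by Lemma \ref{202507111139}(1), they are clearly $\mathds{Z}$-linearly independent since the coefficient of $\langle m\rangle^{\mathrm{c}}$ in $u_m$ is $\pm 1$, and a rank count matches. (One can also note that any kernel element $\sum a_m \langle m\rangle^{\mathrm{c}}$ equals $\sum_{m\geq 2} (-1)^m a_m\, u_m$.)

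Next, I would study the class of each $u_m$ in $H^{d-2}(\AC)$. For $d \geq 4$, part (3) of Lemma \ref{202507111139} gives $u_3 \equiv 0$ modulo coboundaries, and then part (4) telescopes to give $u_m \equiv u_3 \equiv 0$ for every $m \geq 3$. Thus $H^{d-2}(\AC)$ is cyclic, generated by the class $[u_2]$, and part (2) tells us that $3[u_2] = 0$.

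The main point of the argument — and the step that distinguishes $d \in \{3,4\}$ from $d \geq 5$ — is to exhibit for $d \geq 5$ one further relation that makes $[u_2]$ vanish. I would directly compute, using the case $m_1 + 1 < m_2$ of the differential, that
\begin{align*}
    \delta^{d-3}(\langle 2~4 \rangle^{\mathrm{c}})
    = 2\bigl(\langle 4 \rangle^{\mathrm{c}} - \langle 2 \rangle^{\mathrm{c}}\bigr)
    = 2(u_4 - u_2),
\end{align*}
which requires precisely $d \geq 5$ (so that $\langle 2~4\rangle^{\mathrm{c}} \in \ACk^{d-3}$). Since $u_4$ is a coboundary by the preceding paragraph, this shows $2[u_2] = 0$; combined with $3[u_2] = 0$ we conclude $[u_2] = 0$, hence $H^{d-2}(\AC) = 0$ for $d \geq 5$.

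Finally I would check the two small cases by hand. For $d=3$, $\ACk^{d-3} = \ACk^0 = \mathds{Z}\langle~\rangle$ and a direct computation shows $\delta^0(\langle~\rangle) = -3 u_2$, so $H^{1}(\AC) = \mathds{Z}u_2/3\mathds{Z}u_2 = \mathds{Z}/3$ with generator $u_2$. For $d=4$, $\ACk^{d-3} = \ACk^{1}$ has rank $3$ with basis $\langle 1~2 \rangle^{\mathrm{c}}, \langle 1~3 \rangle^{\mathrm{c}}, \langle 2~3 \rangle^{\mathrm{c}}$; evaluating $\delta^1$ on these three basis vectors via Lemma \ref{202507111139}(2),(3) and a short computation of $\delta^1(\langle 1~3\rangle^{\mathrm{c}}) = 2u_3$ and $\delta^1(\langle 2~3\rangle^{\mathrm{c}}) = 3(u_2 - u_3)$, one checks that the image inside $\mathds{Z}u_2 \oplus \mathds{Z}u_3$ is exactly $3\mathds{Z}u_2 \oplus \mathds{Z}u_3$, giving the quotient $\mathds{Z}/3$ generated by $u_2$. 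The only subtle step in the whole argument is spotting the relation $\delta^{d-3}(\langle 2~4\rangle^{\mathrm{c}}) = 2(u_4 - u_2)$; the rest is routine bookkeeping combining the cases of Lemma \ref{202507111139}.
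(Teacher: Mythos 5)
Your proposal is correct and follows essentially the same route as the paper: the cocycles are spanned by the $u_m$, parts (2)--(4) of Lemma \ref{202507111139} kill $[u_m]$ for $m\geq 3$ and give $3[u_2]=0$, and the same extra coboundary $\delta^{d-3}(\langle 2~4\rangle^{\mathrm{c}})=2(u_4-u_2)$ (available exactly when $d\geq 5$) forces $[u_2]=0$ there. The only cosmetic difference is in the cases $d\in\{3,4\}$, where you compute the coboundary lattice directly in the basis $\{u_2,u_3\}$ while the paper packages the same evaluations of $\delta^{d-3}$ into a surjection $Z^{d-2}(\AC)\to\mathds{Z}/3$; you may also want to note explicitly that for $d=2$ the kernel of $\delta^{0}$ is trivial, so $H^{0}(\AC)=0$.
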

\begin{proof}
    The case $d=2$ is clear. We assume that $d \geq 3$. By the first point of Lemma \ref{202507111139}, the group $Z^{d-2} ( \AC)$ is generated by $u_m$ for $2 \leq m \leq (d-1)$. In particular, for $d=3$, $Z^{1} (\AC)$ is generated by $u_2$. By the third and fourth points of Lemma \ref{202507111139}, we have $u_m \in B^{d-2}(\AC)$ for $m \geq 3$. Hence, $H^{d-2} ( \AC)$ is generated by the class of $u_2$ denoted by $[u_2]$.
    
     We first assume that $d \geq 5$.
    By this assumption, we have an element $u_4 \in \ACk^{d-2}$.
    By the first point of Lemma \ref{202507111139}, $u_4 \in Z^{d-2} ( \AC)$.
    It suffices to prove that $[u_2] = 0$.
    By definition, we have $2 (u_2 - u_4) = - \delta^{d-3} ( \langle 2~4 \rangle^{\mathrm{c}})$ which leads to $2 [u_2] = 2[u_2 -u_4] = 0$.
    Furthermore, the second part of Lemma \ref{202507111139} gives $3[u_2] = 0$.
    Thus, we obtain $[u_2] =0$.
    
    For the case $d\in \{3,4\}$, the second part of Lemma \ref{202507111139} implies that the order of the generator $[u_2] \in H^{d-2}(\AC)$ is at most $3$.
    To prove that $[u_2]$ is not zero, we construct a surjective map $H^{d-2}(\AC) \to \mathds{Z}/3$.
    Let $f :Z^{d-2} ( \AC) \to \mathds{Z}/3$ be the map such that $f(u_2) = 1$ and $f(u_{m} ) = 0$ for $m > 2$.
    One can directly verify that this map is well-defined.
    This factors through the quotient map $Z^{d-2} (\AC) \to H^{d-2} (\AC)$.
    In fact, for $2 \leq m \leq (d-1)$, we have $$\delta^{d-3} ( \langle m ~(m+1) \rangle^{\mathrm{c}}) = 3 (-1)^{m_1} \biggl( \langle m+1 \rangle^{\mathrm{c}} + \langle m \rangle^{\mathrm{c}} \biggl),$$ so $f ( \delta^{d-3} ( \langle m (m+1) \rangle^{\mathrm{c}}) ) = 0$.
    In the case $d = 4$, we have $$\delta^{d-3} ( \langle 1~3\rangle^{\mathrm{c}} ) = 2 ( - \langle 3 \rangle^{\mathrm{c}} + \langle 1 \rangle^{\mathrm{c}}) = 2u_3 ,$$ 
    so $f ( \delta^{d-3} ) = 0$.
    Since the map $H^{d-2} ( \AC) \to \mathds{Z}/3$ is surjective, $H^{d-2}(\AC)$ is generated by $[u_2]$ with order $3$.
\end{proof}

\begin{remark}
    Note that Theorem \ref{202507181014} agrees with Theorem \ref{202507171119} when $d=3$; and with
    Theorem \ref{202507041404} when $d = 4$. 
    In particular, the generators given by each theorem are the same.
    Indeed, for $d=3$, the cochain $u_2$ is equal to $(1,1) \in \ACk^{1}$.
    For $d = 4$, we have $[u_2] = [\langle 3 \rangle^{\mathrm{c}} + \langle 2 \rangle^{\mathrm{c}}] \in H^{d-2}(\AC)$ where $\langle 3 \rangle^{\mathrm{c}} + \langle 2 \rangle^{\mathrm{c}} = \langle 1~2 \rangle + \langle 1 ~ 3 \rangle$ is the representative of the generator given by Theorem \ref{202507041404}.
    In fact, we have $u_2 - (\langle 3 \rangle^{\mathrm{c}} + \langle 2 \rangle^{\mathrm{c}}) = u_3 \in B^1 ( \AC)$ by Lemma \ref{202507111139}.
\end{remark}

\section{$\mathrm{Ext}^{*}_{\mathcal{F}(\mathsf{gr};\mathds{Z})} ( T^c \circ \mathfrak{a}, S^{d} \circ \mathfrak{a})$} \label{202508081128}
In this section we adapt the method used in \cite{vespa2018extensions} to compute $\mathrm{Ext}^{*}_{\mathcal{F}(\mathsf{gr};\mathds{Q})} ( T^c \circ \mathfrak{a}, T^{d} \circ \mathfrak{a})$  from $\mathrm{Ext}^{*}_{\mathcal{F}(\mathsf{gr};\mathds{Q})} ( \mathfrak{a}, T^{d} \circ \mathfrak{a})$. The main difference lies in the fact that $\mathrm{Ext}^{*}_{\mathcal{F}(\mathsf{gr};\mathds{Z})} ( \mathfrak{a}, T^{d} \circ \mathfrak{a})$ are not torsion free, so, based on K\"unneth formula, we have to consider $\mathrm{Tor}^{\mathds{Z}}_1$.

For $F$ and $G$ in $\mathcal{F}(\mathsf{gr};\mathds{Z})$, their external tensor product $F \boxtimes G$ is the functor on $\gr \times \gr$ sending the object $(n,m)$ of $\gr \times \gr$ to $F(n)\otimes G(m)$. This yields a functor $-\boxtimes -: \mathcal{F}(\mathsf{gr};\mathds{Z}) \times \mathcal{F}(\mathsf{gr};\mathds{Z}) \to \mathcal{F}(\mathsf{gr}\times \mathsf{gr};\mathds{Z})$ where  $\f(\gr \times \gr; \mathds{Z})$ is the category of functors from $\gr \times \gr$ to $\mathsf{Ab}$.
Let $\pi_n: \mathsf{gr}^{\times n} \to \mathsf{gr}$ be the functor obtained by iteration of the free product.
The graded functor $S^{\bullet} \circ \mathfrak{a}=(S^{d} \circ \mathfrak{a})_{d\in \nat}$ is an exponential functor from $\mathsf{gr}$ to the category of graded abelian groups (i.e. a strong monoidal functor).
In other words, we have natural isomorphisms $S^0  \circ \mathfrak{a}\cong \mathds{Z}$ and
\begin{align} \label{202508051151}
    \pi_n^*( S^d\circ \mathfrak{a})\cong \underset{i_1+ \ldots +i_n=d}{\bigoplus} S^{i_1} \circ \mathfrak{a} \boxtimes \ldots  \boxtimes S^{i_n}\circ \mathfrak{a}
\end{align}
Hence, we have
\begin{eqnarray*}
&\ &\mathrm{Ext}^{*}_{\mathcal{F}(\mathsf{gr};\mathds{Z})} ( T^c \circ \mathfrak{a}, S^{d} \circ \mathfrak{a}) \\
&\cong& \mathrm{Ext}^*_{\mathcal{F}(\mathsf{gr}\times \ldots \times \mathsf{gr};\mathds{Z})}( \mathfrak{a}^{\boxtimes c}, S^d \circ \mathfrak{a} \circ \pi_c) \text{\ by the sum-diagonal adjunction}\\
&\cong &\mathrm{Ext}^*_{\mathcal{F}(\mathsf{gr}\times \ldots \times \mathsf{gr};\mathds{Z})}\big( \mathfrak{a}^{\boxtimes c}, \underset{i_1+ \ldots+i_c=d}{\bigoplus} (S^{i_1}\circ \mathfrak{a} \boxtimes \ldots \boxtimes S^{i_c}\circ \mathfrak{a}) \big)
 \text{\ by (\ref{202508051151})}\\
&\cong & \underset{i_1+ \ldots+i_c=d}{\bigoplus}\mathrm{Ext}^*_{\mathcal{F}(\mathsf{gr}\times \ldots \times \mathsf{gr};\mathds{Z})}\big( \mathfrak{a}^{\boxtimes c},  S^{i_1}\circ \mathfrak{a} \boxtimes \ldots \boxtimes S^{i_c}\circ \mathfrak{a} \big)\\
\end{eqnarray*}
For $c=2$, we obtain the following result, using in an essential way the projective resolution (\ref{202507291730}): 
\begin{prop}[K\"unneth formula] \label{202508211555}
    We have an exact sequence:
\begin{align*}
    0 \to \underset{\alpha+\beta=k}{\bigoplus} \mathrm{Ext}^\alpha_{\mathcal{F}(\mathsf{gr};\mathds{Z})}( \mathfrak{a}, S^{i_1} \circ \mathfrak{a}) \otimes \mathrm{Ext}^\beta_{\mathcal{F}(\mathsf{gr};\mathds{Z})}( \mathfrak{a}, S^{i_2} \circ \mathfrak{a}) \to \mathrm{Ext}^k_{\mathcal{F}(\mathsf{gr}\times \mathsf{gr};\mathds{Z})}( \mathfrak{a}^{\boxtimes 2}, S^{i_1} \circ \mathfrak{a} \boxtimes S^{i_2} \circ \mathfrak{a} ) \\ \to \underset{\alpha+\beta=k+1}{\bigoplus} \mathrm{Tor}^\mathds{Z}_1(\mathrm{Ext}^\alpha_{\mathcal{F}(\mathsf{gr};\mathds{Z})}( \mathfrak{a}, S^{i_1} \circ \mathfrak{a}), \mathrm{Ext}^\beta_{\mathcal{F}(\mathsf{gr};\mathds{Z})}( \mathfrak{a}, S^{i_2} \circ \mathfrak{a})) \to 0
\end{align*}
\end{prop}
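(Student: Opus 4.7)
The plan is to build a projective resolution of $\mathfrak{a}^{\boxtimes 2}$ in $\mathcal{F}(\mathsf{gr}\times \mathsf{gr};\mathds{Z})$ from the resolution $P_\bullet \to \mathfrak{a}$ of (\ref{202507291730}), and then deduce the short exact sequence from the classical algebraic K\"unneth theorem for cochain complexes of abelian groups.

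\textbf{Step 1: external resolution.} First I would consider the totalization of the bicomplex $P_\bullet \boxtimes P_\bullet$ in $\mathcal{F}(\mathsf{gr}\times \mathsf{gr};\mathds{Z})$. Each term $P_n \boxtimes P_m$ is representable at $(n,m) \in \mathsf{gr}\times\mathsf{gr}$, hence projective. For exactness, note that for every group $G\in \mathsf{gr}$ and every $n$, $P_n(G) = \mathds{Z}[\mathsf{gr}(n,G)]$ is a free abelian group, so the complex $P_\bullet(G)$ is a resolution of $\mathfrak{a}(G)$ by flat $\mathds{Z}$-modules. Tensoring such a resolution with another complex of free abelian groups preserves quasi-isomorphisms, so the usual double-complex argument shows that $\mathrm{Tot}(P_\bullet \boxtimes P_\bullet)$ is a projective resolution of $\mathfrak{a}\boxtimes \mathfrak{a}$.

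\textbf{Step 2: identification of the Hom bicomplex.} I would then apply $\mathrm{Hom}_{\mathcal{F}(\mathsf{gr}\times\mathsf{gr};\mathds{Z})}(-, S^{i_1}\circ \mathfrak{a} \boxtimes S^{i_2}\circ \mathfrak{a})$. By the Yoneda lemma applied in each variable,
\begin{align*}
    \mathrm{Hom}(P_n \boxtimes P_m,\, S^{i_1}\circ \mathfrak{a} \boxtimes S^{i_2}\circ \mathfrak{a})
    &\cong (S^{i_1}\circ \mathfrak{a})(n) \otimes_{\mathds{Z}} (S^{i_2}\circ \mathfrak{a})(m) \\
    &\cong \mathrm{Hom}(P_n, S^{i_1}\circ \mathfrak{a}) \otimes_{\mathds{Z}} \mathrm{Hom}(P_m, S^{i_2}\circ \mathfrak{a}),
\end{align*}
and the differentials of the resulting cochain bicomplex are those of the tensor product $C^\bullet \otimes_{\mathds{Z}} D^\bullet$, where $C^\bullet$ and $D^\bullet$ are the cochain complexes (\ref{202507291735}) computing $\mathrm{Ext}^*_{\mathcal{F}(\mathsf{gr};\mathds{Z})}(\mathfrak{a}, S^{i_1}\circ \mathfrak{a})$ and $\mathrm{Ext}^*_{\mathcal{F}(\mathsf{gr};\mathds{Z})}(\mathfrak{a}, S^{i_2}\circ \mathfrak{a})$ respectively. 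Hence $\mathrm{Ext}^k_{\mathcal{F}(\mathsf{gr}\times\mathsf{gr};\mathds{Z})}( \mathfrak{a}^{\boxtimes 2}, S^{i_1}\circ \mathfrak{a} \boxtimes S^{i_2}\circ \mathfrak{a}) \cong H^k(\mathrm{Tot}(C^\bullet \otimes D^\bullet))$.

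\textbf{Step 3: K\"unneth.} Finally, both $C^\bullet$ and $D^\bullet$ are degreewise free abelian groups, since $(S^{i_j}\circ \mathfrak{a})(\mathds{Z}^{*(n+1)}) = S^{i_j}(\mathds{Z}^{n+1})$ is a free abelian group. Hence the standard algebraic K\"unneth theorem for a tensor product of complexes of flat $\mathds{Z}$-modules applies and yields the desired short exact sequence. The main (mild) obstacle is checking rigorously that $\mathrm{Tot}(P_\bullet \boxtimes P_\bullet)$ is indeed a resolution of $\mathfrak{a}\boxtimes \mathfrak{a}$ in $\mathcal{F}(\mathsf{gr}\times\mathsf{gr};\mathds{Z})$; beyond the pointwise flatness argument above, one could alternatively observe that $\boxtimes$ is exact in each variable on functors valued in abelian groups, so the spectral sequence of the bicomplex collapses and gives the claim. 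All other steps are routine.
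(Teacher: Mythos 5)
Your proposal is correct and follows essentially the same route as the paper's proof: resolve $\mathfrak{a}\boxtimes\mathfrak{a}$ by $P_\bullet\boxtimes P_\bullet$, identify $\mathrm{Hom}_{\mathcal{F}(\mathsf{gr}\times\mathsf{gr};\mathds{Z})}(P_k\boxtimes P_l,\, S^{i_1}\circ\mathfrak{a}\boxtimes S^{i_2}\circ\mathfrak{a})$ with $S^{i_1}(\mathds{Z}^k)\otimes S^{i_2}(\mathds{Z}^l)$ via the Yoneda lemma, observe degreewise freeness, and invoke the classical cochain K\"unneth theorem. Your extra justification that $\mathrm{Tot}(P_\bullet\boxtimes P_\bullet)$ is indeed a projective resolution (pointwise flatness, or exactness of $\boxtimes$) is a detail the paper only asserts, so it is a welcome addition rather than a divergence.
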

\begin{proof}
  By (\ref{202507291730}), we have a projective resolution $P_\bullet \to \mathfrak{a}$ of $\mathfrak{a}$ where $P_k=\mathds{Z}[\mathsf{gr}(k,-)]$.   By the Yoneda lemma, we have $\mathrm{Hom}_{\mathcal{F}(\mathsf{gr};\mathds{Z})}( P_k, S^{i} \circ \mathfrak{a})\cong S^i(\mathds{Z}^k)$, which is a free abelian group. Hence, the complex $C_1^\bullet=\mathrm{Hom}_{\mathcal{F}(\mathsf{gr};\mathds{Z})}(P_\bullet, S^{i_1} \circ \mathfrak{a})$ is flat. We now consider the complexes $C_2^\bullet=\mathrm{Hom}_{\mathcal{F}( \mathsf{gr};\mathds{Z})}(P_\bullet, S^{i_2} \circ \mathfrak{a})$ and $C_1^\bullet \otimes C_2^\bullet$. Note that $P_k \boxtimes P_l \cong \mathds{Z}[(\mathsf{gr} \times \mathsf{gr}) \Big( (k,l),-\Big)]$ so, by the Yoneda lemma, we have $\mathrm{Hom}_{\mathcal{F}( \mathsf{gr}\times \mathsf{gr};\mathds{Z})}(P_k \boxtimes P_l, S^{i_1} \circ \mathfrak{a} \boxtimes S^{i_2} \circ \mathfrak{a} ) \cong  S^{i_1}(\mathds{Z}^k) \otimes S^{i_2}(\mathds{Z}^l)$. 
  Hence, we have:
  \begin{align*}
    (C_1^\bullet \otimes C_2^\bullet)^n
    &=\underset{k+l=n}{\bigoplus} \mathrm{Hom}_{\mathcal{F}( \mathsf{gr};\mathds{Z})}(P_k, S^{i_1} \circ \mathfrak{a}) \otimes \mathrm{Hom}_{\mathcal{F}( \mathsf{gr};\mathds{Z})}(P_l, S^{i_2} \circ \mathfrak{a}) \\
    &\cong \underset{k+l=n}{\bigoplus} S^{i_1}(\mathds{Z}^k) \otimes S^{i_2}(\mathds{Z}^l) \quad &\textrm{by the Yoneda lemma}\\
    &\cong \underset{k+l=n}{\bigoplus} \mathrm{Hom}_{\mathcal{F}( \mathsf{gr}\times \mathsf{gr};\mathds{Z})}(P_k \boxtimes P_l, S^{i_1} \circ \mathfrak{a} \boxtimes S^{i_2} \circ \mathfrak{a} )\\
    & =\underset{k+l=n}{\prod} \mathrm{Hom}_{\mathcal{F}( \mathsf{gr}\times \mathsf{gr};\mathds{Z})}(P_k \boxtimes P_l, S^{i_1} \circ \mathfrak{a} \boxtimes S^{i_2} \circ \mathfrak{a} ) \quad &\textrm{$P_\bullet$: bounded below}\\
    &\cong \mathrm{Hom}_{\mathcal{F}( \mathsf{gr}\times \mathsf{gr};\mathds{Z})}(\underset{k+l=n}{\bigoplus}P_k \boxtimes P_l, S^{i_1} \circ \mathfrak{a} \boxtimes S^{i_2} \circ \mathfrak{a} )\\
    &= \mathrm{Hom}_{\mathcal{F}( \mathsf{gr}\times \mathsf{gr};\mathds{Z})}\big((P_\bullet \boxtimes P_\bullet)_n, S^{i_1} \circ \mathfrak{a} \boxtimes S^{i_2} \circ \mathfrak{a} \big)\\
    &= \Big(\mathrm{Hom}_{\mathcal{F}( \mathsf{gr}\times \mathsf{gr};\mathds{Z})}(P_\bullet \boxtimes P_\bullet, S^{i_1} \circ \mathfrak{a} \boxtimes S^{i_2} \circ \mathfrak{a} ) \Big)^n .
  \end{align*}

Since $P_\bullet \boxtimes P_\bullet$ is a projective resolution of $\mathfrak{a} \boxtimes \mathfrak{a}$, we deduce the statement from  the cochain version of the K\"unneth formula (which is dual to the K\"unneth formula for chain complexes given, for example, in \cite[Thm 2.1 p. 172]{Hilton-Stamm}).

\end{proof}
Using this exact sequence we obtain:
\begin{align*}
        \mathrm{Ext}^{i}_{\mathcal{F}(\mathsf{gr};\mathds{Z})} ( T^2 \circ \mathfrak{a}, S^{2} \circ \mathfrak{a}) =
        \begin{cases}
            \mathds{Z} & i =0; \\
            0 & \mathrm{otherwise} .
        \end{cases}
    \end{align*}
    \begin{align*}
        \mathrm{Ext}^{i}_{\mathcal{F}(\mathsf{gr};\mathds{Z})} ( T^2 \circ \mathfrak{a}, S^{3} \circ \mathfrak{a}) =
        \begin{cases}
            (\mathds{Z}/2)^{ \oplus 2} & i =1; \\
            0 & \mathrm{otherwise} .
        \end{cases}
    \end{align*}
    \begin{align*}
        \mathrm{Ext}^{i}_{\mathcal{F}(\mathsf{gr};\mathds{Z})} ( T^2 \circ \mathfrak{a}, S^{4} \circ \mathfrak{a}) =
        \begin{cases}
           (\mathds{Z}/3)^{ \oplus 2}  \oplus \mathds{Z}/2& i =1; \\
            (\mathds{Z}/2)^{ \oplus 3}  & i =2; \\
            0 & \mathrm{otherwise} .
        \end{cases}
    \end{align*}
\begin{align*}
        \mathrm{Ext}^{i}_{\mathcal{F}(\mathsf{gr};\mathds{Z})} ( T^2 \circ \mathfrak{a}, S^{5} \circ \mathfrak{a}) =
        \begin{cases}
            (\mathds{Z}/2)^{ \oplus 2} & i =1; \\
            (\mathds{Z}/3)^{ \oplus 2} \oplus (\mathds{Z}/2)^{ \oplus 2} & i =2; \\
            (\mathds{Z}/2)^{\oplus 4} & i =3; \\
            0 & \mathrm{otherwise} .
        \end{cases}
    \end{align*}
    Note that $\mathrm{Ext}^{i}_{\mathcal{F}(\mathsf{gr};\mathds{Z})} ( \mathfrak{a}^{\otimes 2}, S^{4} \circ \mathfrak{a}) $ is computed in \cite[Example 11.11]{arone2025polynomial}.

\appendix
\section{Arithmetic results on binomial coefficients} \label{202507071237}
In this Appendix we recall some arithmetic properties of binomial coefficients used in the paper. Throughout this appendix, we fix a prime $p$.

For $m \in \nat$, consider the base-$p$ expansion of $m$ i.e. $m = \sum^{l}_{k=0} m_k p^{k}$ where $m_k \in \nat$ and $0 \leq m_k < p$.
We assume that $m_l \neq 0$ unless specified otherwise.
We denote the sum of the digits by 
$$S_{p} (m) {:=} \sum^{l}_{k=0} m_k .$$

\subsection{Kummer's theorem}
Let $v_{p} (m)$ be the {\it $p$-adic valuation} of $m$ i.e. the exponent of the highest power of $p$ that divides $m$.
The $p$-adic valuation of binomial coefficients can be computed as follows:
\begin{theorem}[Kummer] \label{Kummer}
    Let $n,r \in \nat$ such that $n \geq r$.
    Then we have
    \begin{align*}
        v_{p} \left( \binom{n}{r} \right) = \frac{S_{p}(r) + S_{p}(n-r) - S_{p}(n)}{p-1} .
    \end{align*}
\end{theorem}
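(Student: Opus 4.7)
The plan is to derive Kummer's theorem from Legendre's formula for the $p$-adic valuation of a factorial, namely
\[
v_p(m!) = \frac{m - S_p(m)}{p-1} \qquad \text{for every } m \in \nat.
\]
Once Legendre's formula is available, the statement follows immediately by applying $v_p$ to the identity $\binom{n}{r} = n!/(r!\,(n-r)!)$: the three terms $n/(p-1)$, $-r/(p-1)$ and $-(n-r)/(p-1)$ cancel, leaving exactly $\frac{S_p(r) + S_p(n-r) - S_p(n)}{p-1}$.

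First, I would establish Legendre's formula. Starting from the classical counting identity
\[
v_p(m!) = \sum_{k=1}^{\infty} \left\lfloor \frac{m}{p^k} \right\rfloor,
\]
obtained by grouping the multiples of $p$, $p^2$, $p^3, \ldots$ among $1, 2, \ldots, m$, I would substitute the base-$p$ expansion $m = \sum_{j=0}^{l} m_j p^j$. Since $\lfloor m/p^k \rfloor = \sum_{j \geq k} m_j p^{j-k}$, reversing the order of summation gives
\[
\sum_{k=1}^{\infty} \left\lfloor \frac{m}{p^k} \right\rfloor = \sum_{j=1}^{l} m_j \sum_{k=1}^{j} p^{j-k} = \sum_{j=1}^{l} m_j \cdot \frac{p^j - 1}{p-1} = \frac{1}{p-1} \left( \sum_{j=0}^{l} m_j p^j - \sum_{j=0}^{l} m_j \right) = \frac{m - S_p(m)}{p-1}.
\]

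Second, I would apply Legendre's formula to each factorial appearing in $\binom{n}{r}$ and simplify, obtaining the desired expression. There is essentially no obstacle in this argument; the only point that deserves care is the justification of the interchange of summations, but both sums are finite (the outer sum terminates once $p^k > m$), so this is immediate. The proof is thus quite short once Legendre's formula is in hand, and its main conceptual content is really the base-$p$ counting identity for $v_p(m!)$.
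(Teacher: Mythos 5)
Your proof is correct: deriving the identity from Legendre's formula $v_p(m!) = \frac{m - S_p(m)}{p-1}$ (itself obtained from $v_p(m!) = \sum_{k\geq 1}\lfloor m/p^k\rfloor$ and the base-$p$ expansion) and then applying $v_p$ to $\binom{n}{r} = n!/(r!\,(n-r)!)$ is the standard argument, and all the steps, including the finite interchange of sums, are sound. The paper itself states Kummer's theorem as a recalled classical fact without proof, so there is no in-paper argument to compare against; your write-up supplies exactly the usual derivation one would cite.
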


For the sake of simplicity, we will use the notation $v_{p}  \binom{n}{r}$ instead of $v_{p} \left( \binom{n}{r} \right)$.

The following example is used in Section \ref{202507141041}.
\begin{Example} \label{202506251631}
    For $n \in \mathds{N}$ and $r$ the remainder when $n$ is divided by $p$.
    Let $n=\sum^{l}_{k= 0} n_k p^k$ be the base-$p$ expansion of $n$.
    By the assumption, we have $n_0 = r$, so that Theorem \ref{Kummer} implies $v_{p}\binom{n}{r} = \frac{r + \sum^{l}_{k=1} n_k - \sum^{l}_{k= 0} n_k}{p-1} = 0$.
\end{Example}

For $m \in \nat$, $\lfloor m \rfloor$ denotes the maximal integer which does not exceed $m$.
\begin{Lemma} \label{202506251622}
    Let $d,r \in \nat$ such that $d \geq r$.
    If $p \mid r$, then 
    we have $v_{p} \binom{d}{r} = v_{p}  \binom{\lfloor d/p \rfloor}{r/p}$.
\end{Lemma}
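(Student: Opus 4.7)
The plan is to apply Kummer's theorem (Theorem \ref{Kummer}) directly, after reducing everything to base-$p$ digit sums. Write $d = pq + s$ with $0 \leq s < p$ so that $q = \lfloor d/p \rfloor$, and write $r = pr'$ with $r' = r/p$, using the hypothesis $p \mid r$. The inequality $d \geq r$ immediately forces $q \geq r'$ (either $s = 0$ and $pq \geq pr'$, or $s > 0$ and $pq > pr' - p$), so the right-hand binomial $\binom{q}{r'}$ makes sense.

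Next, I would compare the three digit sums appearing in Kummer's formula. Since the last base-$p$ digit of $r$ is $0$, one has $S_p(r) = S_p(r')$. Since the last digit of $d$ is $s$, one has $S_p(d) = s + S_p(q)$. The crucial point is that subtracting $r$ from $d$ requires no borrow at the units position, because that digit of $r$ is $0$; hence $d - r = p(q - r') + s$, which gives $S_p(d - r) = s + S_p(q - r')$.

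Substituting into Kummer's theorem yields
\begin{align*}
v_p\binom{d}{r} &= \frac{S_p(r) + S_p(d-r) - S_p(d)}{p-1} \\
&= \frac{S_p(r') + (s + S_p(q-r')) - (s + S_p(q))}{p-1} \\
&= \frac{S_p(r') + S_p(q-r') - S_p(q)}{p-1} = v_p\binom{q}{r'},
\end{align*}
which is exactly the claim. There is no real obstacle here: the only thing to verify carefully is the absence of a borrow in the subtraction $d-r$, which is automatic from $p \mid r$, and the nonnegativity $q \geq r'$ needed to apply Kummer on the right-hand side.
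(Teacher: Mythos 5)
Your proof is correct and follows essentially the same route as the paper: both apply Kummer's theorem and compare the base-$p$ digit sums of $d$, $r$, $d-r$ with those of $\lfloor d/p\rfloor$, $r/p$, $\lfloor d/p\rfloor - r/p$, using that the units digit of $r$ vanishes so the units digit of $d-r$ equals that of $d$. Your formulation via $d = pq+s$ is just a compact repackaging of the paper's explicit digit expansions, and the key observations (no borrow at the units place, $q \geq r'$) are handled correctly.
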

\begin{proof}
    To prove the assertion, it is useful to review the fact that if $n = \sum^{l}_{k=0} n_k p^k$ is the base-$p$ expansion, then so $\lfloor n/p \rfloor = \sum^{l}_{k=0} n_k p^{k-1}$ is.
    Let $d = \sum^{l}_{k= 0} d_k p^{k}$, $r = \sum^{l}_{k= 0} r_k p^{k}$ and $d-r = \sum^{l}_{k= 0} m_k p^{k}$ be the base-$p$ expansions.
    Here, $d_l$, $r_l$ and $m_l$ are allowed to be zero.
    As $p \mid r$, it follows that $r_0 = 0$, and hence $d_0 = m_0$.
    By the previous fact, the base-$p$ expansions of $\lfloor d/p \rfloor$, $\lfloor r/p \rfloor$ and $\lfloor (d-r)/p \rfloor$ are given by $\sum^{l}_{k= 1} d_k p^{k-1}$, $\sum^{l}_{k=1} r_k p^{k-1}$ and $\sum^{l}_{k=1} m_k p^{k-1}$ respectively.
    The assumption $p \mid r$ implies $\lfloor (d-r)/p \rfloor = \lfloor d/p \rfloor - r/p$ and $\lfloor r/p \rfloor = r/p$.
    Hence, we have $S_{p} ( \lfloor d/p \rfloor ) = \sum^{l}_{k=1} d_k$, $S_{p} ( \lfloor d/p \rfloor-r/p ) = \sum^{l}_{k=1} m_k$ and $S_{p} (r/p) = \sum^{l}_{k=1} r_k$.
    By Theorem \ref{Kummer}, we obtain
   $$v_{p}  \binom{\lfloor d/p \rfloor}{r/p} = \frac{\sum^{l}_{k = 1} r_k + \sum^{l}_{k = 1} m_k - \sum^{l}_{k = 1} d_k}{p-1}
        = \frac{\sum^{l}_{k = 0} r_k + \sum^{l}_{k = 0} m_k - \sum^{l}_{k = 0} d_k}{p-1} 
        = v_{p} \binom{d}{r}.
    $$

\end{proof}

\begin{Example}
\label{202506301203}
    Let $m,l\in \nat$  such that $m \leq l$.
    By iteratively applying Lemma \ref{202506251622}, we obtain 
    $v_{p}  \binom{p^{l}}{p^{m}} = v_{p}  \binom{p^{l-1}}{p^{m-1}} = \cdots v_{p}  \binom{p^{l-m}}{1}= l-m$.
\end{Example}

\begin{Lemma} \label{202506251609}
    If $d$ is not a power of $p$, then there exists $r$ such that $0 < r < d$ and $v_{p} \binom{d}{r} = 0$.
\end{Lemma}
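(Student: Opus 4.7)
The plan is to deduce this directly from Kummer's theorem (Theorem \ref{Kummer}) by producing an explicit $r$ of the form $p^{k_0}$ for a carefully chosen digit position $k_0$ in the base-$p$ expansion of $d$.

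First I would write the base-$p$ expansion $d = \sum_{k=0}^{l} d_k p^k$ with $d_l \neq 0$. The hypothesis that $d$ is not a power of $p$ means that $d$ is not of the form $p^{k_0}$, i.e.\ either $d$ has at least two nonzero digits or $d$ has a single nonzero digit $d_l \geq 2$. In both cases, I can pick an index $k_0$ with $d_{k_0} \geq 1$ and $p^{k_0} < d$: if $d$ has a single nonzero digit $d_l \geq 2$, take $k_0 = l$, so $p^{l} < d_l p^{l} = d$; if $d$ has at least two nonzero digits, any $k_0$ with $d_{k_0} \geq 1$ works since $d \geq p^{k_0} + p^{k_1}$ for another nonzero digit $d_{k_1}$.

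Next I would set $r = p^{k_0}$. Then the base-$p$ expansion of $r$ has a single $1$ in position $k_0$, and since $d_{k_0} \geq 1$, the subtraction $d - r$ requires no borrowing: its digit in position $k_0$ is $d_{k_0} - 1 \geq 0$ and all other digits agree with those of $d$. Consequently $S_p(r) = 1$, $S_p(d-r) = S_p(d) - 1$, and $S_p(r) + S_p(d-r) - S_p(d) = 0$. Kummer's theorem then gives $v_p \binom{d}{r} = 0$, and by construction $0 < r < d$, completing the proof.

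There is no serious obstacle here; the only subtlety is checking that the choice of $k_0$ can always be arranged so that $r = p^{k_0} < d$, which is exactly where the hypothesis that $d$ is not a power of $p$ is used.
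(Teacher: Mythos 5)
Your proof is correct and follows essentially the same route as the paper: both take $r=p^{k}$ for a position $k$ with $d_{k}\neq 0$ (the paper uses the leading digit $k=l$) and use the hypothesis that $d$ is not a power of $p$ only to guarantee $r<d$. The only difference is cosmetic: you verify $v_{p}\binom{d}{r}=0$ by applying Theorem \ref{Kummer} directly via the no-borrowing observation $S_{p}(d-r)=S_{p}(d)-1$, whereas the paper reaches the same conclusion by iterating Lemma \ref{202506251622}, which itself rests on Kummer's theorem.
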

\begin{proof}
    Let $d = \sum^{l}_{k= 0} d_k p^{k}$ be the base-$p$ expansion of $d$ with $d_{l} \neq 0$.
    We consider $r =  p^{l}$.
    Since $d$ is not a power of $p$, it is clear that $0 < r < d$.
    By Lemma \ref{202506251622}, we have
    \begin{align*}
        v_{p}  \binom{d}{p^{l}}  = v_{p}  \binom{\lfloor d/p \rfloor}{p^{l-1}}  .
    \end{align*}
    Note that $\lfloor d/p \rfloor$ has the base-$p$ expansion $\sum^{l}_{k= 1} d_k p^{k-1}$.
    Hence, the iterative applications of Lemma \ref{202506251622} yields $v_{p} \binom{d}{p^{l}} = v_{p}\binom{d_l}{1} = 0$.
\end{proof}

\vspace{3mm}
In this paper, we also need the following definition:
\begin{Defn} \label{202506251636} 
    For $d \in \nat$ such that $d \geq 2$, we define:
    \begin{enumerate}
        \item
        $\mu_{p} (d) {:=} \min \Bigl\{ v_{p}  \binom{d}{k}  \mid 0<k<d \Bigr\}
       \mathrm{\quad  and\quad  }  \mu_{p}(1) {:=} 0$;
        \item
       $\theta_{p} (d)$ to be the minimum of 
        \begin{align*}
            v_{p}  \binom{d}{k}  + v_{p}  \binom{k}{r} 
        \end{align*}
        where $0 < r < k < d$ and $(p\mid k \wedge p\nmid r) \vee (p\mid r \wedge p\nmid k)$.
        If there does not exist such $k$ and $r$, then we define $\theta_{p}(d) {:=} 0$.
    \end{enumerate}
\end{Defn}

\begin{prop}
\label{202506301525}
    Let $d \in \nat^*$.
    We then have 
    \begin{align*}
        \mu_{p} (d) =
        \begin{cases}
            1  & \mathrm{if}\  d = p^l \ \mathrm{for}\  l \in \nat^*  \\
            0 & \mathrm{otherwise}.
        \end{cases}
    \end{align*}
\end{prop}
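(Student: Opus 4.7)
The plan is to handle the two cases of the statement separately, using Kummer's theorem and the lemmas already developed in the appendix.

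First I would dispose of the easy situation where $d$ is not a power of $p$. If $d = 1$, the proposition reduces to the convention $\mu_p(1) = 0$ built into Definition \ref{202506251636}. If $d \geq 2$ and $d$ is not a power of $p$, then Lemma \ref{202506251609} directly produces an integer $r$ with $0 < r < d$ and $v_p\binom{d}{r} = 0$; since $v_p\binom{d}{k} \geq 0$ always, this forces $\mu_p(d) = 0$.

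The remaining case is $d = p^l$ with $l \in \nat^*$, where I must prove the two inequalities $\mu_p(p^l) \leq 1$ and $\mu_p(p^l) \geq 1$. For the upper bound, Example \ref{202506301203} gives $v_p\binom{p^l}{p^{l-1}} = 1$, so the minimum is at most $1$. For the lower bound, I would use Kummer's theorem (Theorem \ref{Kummer}): for any $0 < k < p^l$,
\begin{align*}
    v_p\binom{p^l}{k} = \frac{S_p(k) + S_p(p^l - k) - S_p(p^l)}{p-1} = \frac{S_p(k) + S_p(p^l - k) - 1}{p-1}.
\end{align*}
Since $0 < k < p^l$, both $k$ and $p^l - k$ are strictly positive, so $S_p(k) \geq 1$ and $S_p(p^l - k) \geq 1$, giving $S_p(k) + S_p(p^l - k) \geq 2$ and hence $v_p\binom{p^l}{k} \geq \frac{1}{p-1} > 0$. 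As $v_p\binom{p^l}{k}$ is an integer, it is at least $1$, which proves $\mu_p(p^l) \geq 1$.

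No serious obstacle is expected: the entire argument is a direct application of Kummer's theorem together with previously proved lemmas. The only subtlety is verifying that the strict inequality from Kummer's formula upgrades to the integer inequality $\geq 1$, which follows from the integrality of the $p$-adic valuation.
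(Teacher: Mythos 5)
Your proof is correct and follows essentially the same route as the paper: the case where $d$ is not a power of $p$ via Lemma \ref{202506251609}, and the upper bound $\mu_p(p^l)\leq 1$ via Example \ref{202506301203}. The only (minor) divergence is the lower bound, where the paper deduces $p \mid \binom{p^l}{k}$ for $0<k<p^l$ from the congruence $(x+y)^{p^l}\equiv x^{p^l}+y^{p^l} \pmod p$, while you obtain it from Kummer's theorem using $S_p(k)+S_p(p^l-k)\geq 2$ together with the integrality of the $p$-adic valuation; both are valid one-line arguments, so there is no gap.
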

\begin{proof}
    If $d$ is not a power of $p$, then, by Lemma \ref{202506251609}, we obtain $\mu_{p}(d) = 0$.
    We now suppose that $d = p^{l}$ for a prime $p$ and $l \geq 1$.
    Since $(x+y)^{p^{l}} = x^{p^{l}} + y^{p^{l}} \mod{p}$, we have $p \mid \binom{p^{l}}{k}$ for $0<k<p^{l}$, so $v_{p} \binom{p^{l}}{k} \geq 1$.
    Moreover, using Example \ref{202506301203}, we have $v_{p} \binom{p^{l}}{p^{l-1}} = 1$, so we obtain $\mu_{p}(p^{l}) = 1$.
\end{proof}

\begin{Corollary}
\label{202506301226}
    Let $d \in \nat$.
    \begin{align*}
        \gcd \bigl\{ \binom{d}{k}\mid 0<k<d \bigr\} =
        \begin{cases}
            p & \mathrm{~if~} d = p^l \mathrm{~for~}  p \mathrm{~a~prime~and~} l \in \nat^*\\
             1  & \mathrm{~if~} d \mathrm{~is~not~a~power~of~any~prime.} \\
        \end{cases}
    \end{align*}
\end{Corollary}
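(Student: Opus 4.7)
The corollary asserts a formula for the greatest common divisor $g_d {:=} \gcd\{\binom{d}{k}: 0 < k < d\}$. For any nonempty finite family of positive integers $\{a_i\}$ and any prime $p$, one has $v_p(\gcd_i a_i) = \min_i v_p(a_i)$. Applied to our family, this gives $v_p(g_d) = \mu_p(d)$ in the notation of Definition \ref{202506251636}. Hence the corollary is nothing more than the translation, prime by prime, of Proposition \ref{202506301525} into a single multiplicative statement.

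The plan is therefore to assemble this prime-by-prime information via a short case analysis. If $d = p^l$ for some prime $p$ and $l \in \nat^*$, Proposition \ref{202506301525} gives $\mu_p(d) = 1$; and for any other prime $q \neq p$, the integer $d$ has $p$ as its unique prime factor and so cannot be a power of $q$, whence $\mu_q(d) = 0$. Combining, $v_p(g_d) = 1$ and $v_q(g_d) = 0$ for all $q \neq p$, so $g_d = p$. If on the other hand $d$ is not a prime power, then for \emph{every} prime $q$ the integer $d$ fails to be a power of $q$, so $\mu_q(d) = 0$ by Proposition \ref{202506301525}, and $g_d = 1$. No genuine obstacle arises; the only subtlety is the clean dichotomy between prime powers and non--prime--powers, which is exactly the dichotomy packaged into Proposition \ref{202506301525}.
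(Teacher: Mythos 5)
Your proposal is correct and follows essentially the same route as the paper: both identify $v_p(\gcd)$ with $\mu_p(d)$ and then invoke Proposition \ref{202506301525} prime by prime; your write-up merely spells out the case analysis slightly more explicitly.
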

\begin{proof}
    Let $g$ be the left hand side.
    By definitions, $\mu_{p}(d)$ coincides with the number of $p$'s in the prime factorization of $g$.
    In other words, we have $v_{p} (g) = \mu_{p}(d)$, which is already computed in Proposition \ref{202506301525}.
    By applying the proposition to any prime $p$, we obtain the statement.
\end{proof}

The following arithmetic result will be crucial in Section \ref{202507141041}
to deduce Corollary \ref{202507081749} from Theorem \ref{202507081603}.
    
\begin{prop} \label{202507081602}
    Suppose that $p$ does not divide $d$.
    Then we have
    \begin{align*}
        \theta_{p}(d) =
        \begin{cases}
            1 &  \mathrm{if}\ d = p^{l} + 1 \ \mathrm{for}\  l \geq 1,\\
            0 & \mathrm{otherwise} .
        \end{cases}
    \end{align*}
\end{prop}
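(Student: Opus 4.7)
My approach rests on Lucas's theorem: $v_p \binom{n}{m} = 0$ iff the base-$p$ digits of $m$ do not exceed those of $n$ componentwise (written $m \preceq n$). Hence $v_p\binom{d}{k} + v_p\binom{k}{r} = 0$ precisely when $r \preceq k \preceq d$. Combined with the admissibility condition (exactly one of $k, r$ is divisible by $p$), this forces $r_0 = 0$ and $k_0 \neq 0$, since the alternative $k_0 = 0, r_0 \geq 1$ is incompatible with $r \preceq k$. My task is then to decide, depending on $d$, whether such an admissible $(r,k)$ exists, and when not, to compute the true minimum.

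\textbf{Case $d \neq p^l + 1$.} If $d < p$, no admissible pair exists, so $\theta_p(d) = 0$ by convention (and $d \neq p^l + 1$ trivially). Otherwise write $d = d_0 + d_1 p + \cdots + d_L p^L$ with $d_0 \geq 1$ (since $p \nmid d$) and some $d_j \neq 0$ with $j \geq 1$ (since $d \geq p + 1$). Pick the smallest such $j$ and set $(r, k) = (p^j, p^j + 1)$. Then $r \preceq k \preceq d$, $r_0 = 0$, $k_0 = 1$, and $k < d$ because the equality $d = p^j + 1$ would contradict the hypothesis. So $\theta_p(d) = 0$.

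\textbf{Case $d = p^l + 1$.} Now $d$ has base-$p$ digits $d_0 = d_l = 1$ and all others zero. For the lower bound: $k \preceq d$ with $0 < k < d$ forces $k \in \{1, p^l\}$. The value $k = 1$ admits no $r$. For $k = p^l$, the absorption identity $r \binom{p^l}{r} = p^l \binom{p^l - 1}{r-1}$ combined with $\gcd(\binom{p^l-1}{r-1}, p) = 1$ (valid since the digits of $p^l - 1$ are all $p - 1$) yields $v_p\binom{p^l}{r} = l - v_p(r) = l \geq 1$ whenever $p \nmid r$. Hence $\theta_p(d) \geq 1$. For the upper bound, take $(k, r) = (p, 1)$ when $l = 1$ (valuations $0 + 1 = 1$) and $(k, r) = (p^{l-1} + 1, p^{l-1})$ when $l \geq 2$; a Kummer count shows $v_p\binom{p^l + 1}{p^{l-1} + 1} = 1$ (a single carry at position $l-1$ when adding $p^{l-1} + 1$ and $(p-1) p^{l-1}$) while $v_p\binom{p^{l-1} + 1}{p^{l-1}} = v_p(p^{l-1} + 1) = 0$, totaling $1$. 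So $\theta_p(d) = 1$.

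\textbf{Main obstacle.} The delicate part is the lower bound in the last case, which hinges on the sparse digit pattern of $p^l + 1$: only two nonzero digits, both equal to $1$, leaving essentially no room for a Lucas-compatible $k$ admitting an admissible $r$. Everything else reduces to routine base-$p$ bookkeeping.
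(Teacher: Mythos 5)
Your proof is correct and follows essentially the same route as the paper: both detect vanishing $p$-adic valuations via the base-$p$ digit criterion (Kummer/Lucas), and for $d = p^{l}+1$ both show that the only digit-dominated values of $k$ are $1$ and $p^{l}$ (with $\binom{p^{l}}{r}$ always divisible by $p$) before exhibiting the very same witness pairs $(p,1)$ and $(p^{l-1}+1,\,p^{l-1})$. Your single uniform witness $(r,k)=(p^{j},p^{j}+1)$ for $d\neq p^{l}+1$, $d>p$ neatly replaces the paper's three-subcase analysis on the digits of $d$, but the underlying argument is the same.
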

\begin{proof}
    Let $d = \sum^{l}_{k= 0} d_k p^{k}$ be the base-$p$ expansion of $d$ with $d_l \neq 0$.
    By the hypothesis $p \nmid d$, it is clear that $d_0 \neq 0$.
    If $l = 0$, i.e. $d<p$, then it is trivial that $\theta_{p} (d) = 0$.
    We now assume that $l > 0$ and divide the argument into several cases:
    \begin{enumerate}
        \item
        We assume that $d_l > 1$.
        Consider $k = p^{l} + \sum^{l-1}_{k= 0} d_kp^{k}$ and $r = p^{l} + \sum^{l-1}_{k= 1} d_k p^{k}$.
        It is obvious that $0<r<k<d$, $p \mid r$ and $p \nmid k$.
        Furthermore, one can verify that $v_{p} \binom{d}{k} = v_{p}\binom{k}{r} =0$ by applying the Kummer's theorem.
        Hence, we obtain
        $0 \leq \theta_{p}(d) \leq v_{p} \binom{d}{k} + v_{p}\binom{k}{r} = 0$ which leads to $\theta_{p}(d) = 0$.
        \item 
        We assume that $d_0 >1$.
        For $k = \sum^{l}_{k= 1} d_k p^{k} +1 $ and $r = \sum^{l}_{k= 1} d_k p^{k}$, we analogously obtain $0 \leq \theta_{p}(d) \leq v_{p} \binom{d}{k} + v_{p}\binom{k}{r} = 0$.
        \item 
        We assume that $d_l = d_0 = 1$.
        \begin{itemize}
            \item We consider the case where there exists $0<j<l$ such that $d_j \neq 0$ (especially, $l>1$).
            If we set $k = \sum^{j}_{k= 0} d_k p^{k}$ and $r = \sum^{j}_{k= 1} d_k p^{k}$, then we have $0<r<k<d$, $p \mid r$ and $p \nmid k$.
            By directly applying the Kummer's theorem, we obtain $v_{p} \binom{d}{k} = v_{p}\binom{k}{r} = 0$ which proves $0 \leq \theta_{p}(d) \leq v_{p} \binom{d}{k} + v_{p}\binom{k}{r} = 0$.
            \item The only exclusive case is that $d = p^{l} + 1$.
            To prove that $\theta_{p}(d) = 1$, we start with showing that $\theta_{p}(d)$ is positive.
            By the Kummer's theorem, for $0 \leq k \leq p^{l}+1$, the binomial coefficient $\binom{p^l+1}{k}$ is coprime to $p$ if and only if $k \in \{0, 1 , p^l , p^l+1 \}$.
            Thus, we have $v_{p} \binom{d}{k} + v_{p}\binom{k}{r} > 0$ for $0 < r < k < p^l$.
            Note that this is true even if $k = p^l$.
            In fact, $\binom{p^l}{r}$ is divided by $p$ for $0 < r < p^{l}$, so that $v_{p}\binom{k}{r} > 0$.
            Thus, it follows that $\theta_{p}(d)>0$.
            
            ~~All that remains is to prove that there exists $k,r$ (satisfying the conditions in Definition \ref{202506251636}) such that $v_{p} \binom{d}{k} + v_{p}\binom{k}{r} =1$.
            If $l > 1$, then we put $k = p^{l-1}+1, r = p^{l-1}$.
            For these $k$ and $r$, the Kummer's theorem yields $v_{p} ( \binom{p^{l}+1}{k} ) = 1$ and $ v_{p} (\binom{k}{r})= 0$.
            Similarly, if $l =1$, it suffices to consider $k = p$ and $r = 1$.
        \end{itemize}
    \end{enumerate}
\end{proof}

The purpose of the end of this appendix is to compute the factor of $\binom{n}{r}$ which is coprime to $p$ modulo any power of $p$. The result is given in Theorem \ref{202507021304} which requires the introduction of the following definitions.

\begin{Defn}
    For $m \in \nat^*$, we define $F_{p} (m)$ to be the product of positive integers $\leq m$ that are coprime to $p$. 
    We also define $F_{p}(0)=1$.
\end{Defn}
For $n \in \nat$, let $n = \sum^{l}_{k=0} n_k p^k$ be the base-$p$ expansion.
    We also set $n_{k} = 0$ for $k > l$.

\begin{Defn} \label{202507040934}
    Let $N \in \nat^*$.
    We define $\tau^{(N)}_j (n) {:=} \sum^{N-1}_{k=0} n_{k+j} p^k$ for any $j \in \mathds{N}$ and \\
    $\alpha_{p,N} (n) {:=} \sum_{j\geq N} \lfloor n/p^{j} \rfloor$.    
    By using these, we define
    $$\tilde{F}_{p,N} (n)  {:=} (\pm 1)^{\alpha_{p,N} (n)} \prod_{j \geq 0} F_{p} ( \tau^{(N)}_{j} (n)) . $$
    Here, $(\pm 1)$ is $-1$ except if $p \geq 2$ and $N \geq 3$.
    If $j$ is sufficiently large, then we have $\tau^{(N)}_{j}(n) = 0$ so that this definition reduces to a finite product.
\end{Defn}

Note that, by definition, $F_{p}(m)$ is coprime to $p$, hence, so $\tilde{F}_{p,N}(n)$ is.

\begin{Defn} \label{202507040924}
    Let $p \in \prim$.
    For non-negative integers $k,r$ such that $k \geq r$, we define $\binom{k}{r}_{p} {:=} p^{v_{p} \binom{k}{r}}$.
    In other words, $\binom{k}{r}_{p}$ is the exact power of the prime $p$ dividing the binomial coefficient $\binom{k}{r}$.

\end{Defn}

In \cite[Theorem 1]{granville1997arithmetic}, the factor of $\binom{n}{r}$ that is coprime to $p$, i.e. $\binom{n}{r} / \binom{n}{r}_{p}$, is computed modulo any power of $p$.
We give its equivalent statement by using our notation:
\begin{theorem}\label{202507021304}
    Let $p \in \prim$ and $N \in \nat^*$.
    For $n,r \in \nat$ such that $n \geq r$, we have
    \begin{align*}
        \frac{\binom{n}{r}}{\binom{n}{r}_{p}} = \frac{\tilde{F}_{p,N} (n)}{\tilde{F}_{p,N} (r)\tilde{F}_{p,N} (n-r)} \mod{p^{N}}  .
    \end{align*}
    
\end{theorem}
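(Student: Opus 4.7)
The plan is to reduce Granville's formula to a computation of $n!/p^{v_p(n!)}$ modulo $p^N$, which in turn reduces to a generalization of Wilson's theorem. I would proceed in three steps.

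First, I would establish the factorization
\[
\frac{n!}{p^{v_p(n!)}} \;=\; \prod_{j \geq 0} F_p\bigl(\lfloor n/p^j \rfloor\bigr)
\]
by induction on $n$. The integers in $[1,n]$ divisible by $p$ are exactly $p, 2p, \ldots, \lfloor n/p\rfloor\cdot p$, whose product is $p^{\lfloor n/p\rfloor}\,\lfloor n/p\rfloor!$, while the remaining integers contribute $F_p(n)$. Combined with Legendre's formula $v_p(n!)=\sum_{j\geq 1}\lfloor n/p^j\rfloor$ and the inductive hypothesis applied to $\lfloor n/p\rfloor!$, this gives the identity.

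Second, I would compute $F_p(m) \bmod p^N$ for general $m$. Writing $m = qp^N + s$ with $0 \leq s < p^N$ and partitioning the integers of $[1,m]$ coprime to $p$ into $q$ blocks of length $p^N$ plus a tail of length $s$, the fact that each block yields the same set of residues modulo $p^N$ gives
\[
F_p(m) \;\equiv\; F_p(p^N)^{q}\, F_p(s) \pmod{p^N}.
\]
At this point I would invoke the generalized Wilson--Gauss theorem: $F_p(p^N) \equiv -1 \pmod{p^N}$ except when $p=2$ and $N \geq 3$, in which case $F_2(2^N) \equiv +1 \pmod{2^N}$. This step can be proved by pairing each coprime residue in $[1,p^N]$ with its inverse modulo $p^N$ and analyzing the self-inverse elements, whose number differs in the even-prime case.

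Third, applying the relation $F_p(m) \equiv (\pm 1)^{\lfloor m/p^N\rfloor} F_p(m \bmod p^N) \pmod{p^N}$ iteratively to each $m = \lfloor n/p^j\rfloor$, and observing that $\lfloor n/p^j\rfloor \bmod p^N = \tau_j^{(N)}(n)$ while $\sum_{j\geq 0}\lfloor \lfloor n/p^j\rfloor / p^N\rfloor$ telescopes to $\sum_{j\geq N}\lfloor n/p^j\rfloor = \alpha_{p,N}(n)$, I obtain $n!/p^{v_p(n!)} \equiv \tilde{F}_{p,N}(n) \pmod{p^N}$. The theorem then follows by taking the ratio via $\binom{n}{r} = n!/(r!(n-r)!)$ together with $v_p\binom{n}{r} = v_p(n!) - v_p(r!) - v_p((n-r)!)$; since the same formula is applied to $n$, $r$, and $n-r$, the sign bookkeeping in the quotient is automatic. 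The main obstacle is the Wilson--Gauss step itself with its even-prime anomaly, because the sign $(\pm 1)$ must be tracked consistently through every iteration of the reduction; once that is pinned down, the rest reduces to careful base-$p$ accounting.
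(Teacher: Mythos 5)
Your proposal is correct, but it is worth noting that the paper does not prove this statement at all: Theorem \ref{202507021304} is quoted as a restatement of Granville's Theorem 1 in \cite{granville1997arithmetic}, translated into the notation of Definition \ref{202507040934}. What you have written is essentially a self-contained proof of that cited result, and it follows the standard route: (i) the exact factorization $n!/p^{v_p(n!)}=\prod_{j\geq 0}F_p(\lfloor n/p^j\rfloor)$, proved by splitting off the multiples of $p$ and using Legendre's formula; (ii) the block reduction $F_p(m)\equiv F_p(p^N)^{\lfloor m/p^N\rfloor}F_p(m \bmod p^N)\pmod{p^N}$ together with the Gauss--Wilson generalization $F_p(p^N)\equiv -1\pmod{p^N}$ except that $F_2(2^N)\equiv +1$ for $N\geq 3$; (iii) the reindexing $\lfloor\lfloor n/p^j\rfloor/p^N\rfloor=\lfloor n/p^{j+N}\rfloor$ and $\lfloor n/p^j\rfloor\bmod p^N=\tau^{(N)}_j(n)$, which yields $n!/p^{v_p(n!)}\equiv \tilde{F}_{p,N}(n)\pmod{p^N}$, after which the theorem follows by dividing the congruences for $n$, $r$, $n-r$ (legitimate since each $\tilde{F}_{p,N}$-value is coprime to $p$, hence invertible modulo $p^N$). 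All three steps are sound, and your reading of the sign convention is the correct one: the exceptional case is $p=2$ and $N\geq 3$ (the phrase ``$p\geq 2$'' in Definition \ref{202507040934} is evidently a typo), which your Wilson-pairing argument, counting the four square roots of unity modulo $2^N$ for $N\geq 3$, handles correctly. What your approach buys is independence from the literature reference, at the cost of carrying out the (routine but nontrivial) Gauss--Wilson step; what the paper's approach buys is brevity, since it only needs the statement as an arithmetic black box for the computations with $\ACPN$.
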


\section{Integral extensions between functors on free abelian groups} \label{202508141211k}

Let $\mathsf{ab}$ be the category of finitely-generated free abelian groups and group homomorphisms. This category is essentially small, with skeleton labeled by $\nat$, where $n\in \nat$ corresponds to the free abelian group $\mathds{Z}^n$. 
We denote by $\f(\mathsf{ab}; \mathds{Z})$  the category of functors from $\mathsf{ab}$ to $\mathsf{Ab}$, the category of $\mathds{Z}$-modules. In this appendix, we present results concerning Ext-groups between functors in $\f(\mathsf{ab}; \mathds{Z})$, which are related to the results of this paper. 

Let $I:\mathsf{ab}\to \mathsf{Ab}$ be the inclusion functor and $S^d : \mathsf{ab}\to \mathsf{Ab}$ be the functor which assigns the $d$-th symmetric power.
The computation of $ \mathrm{Ext}^{*}_{\mathcal{F}(\mathsf{ab};\mathds{Z})} ( I, S^{d} ) $ is recalled in the two following results:

\begin{theorem}[\mbox{B\"okstedt (see also \cite[Corollary 3.2]{Franjou-Pira})}]
\label{202508150909k}
We have:
\begin{align*}
  \mathrm{Ext}^{i}_{\mathcal{F}(\mathsf{ab};\mathds{Z})} ( I, I ) \cong
        \begin{cases}
            \mathds{Z}/ l & \mathrm{~if~} i=2l \mathrm{~for~} l\in \nat
             , \\
            0  & \mathrm{otherwise}.
        \end{cases}
    \end{align*}
\end{theorem}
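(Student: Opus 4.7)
The plan is to reduce the computation to known calculations in stable homotopy theory via MacLane cohomology. First, I would invoke the theorem of Pirashvili--Waldhausen identifying Ext-groups in $\mathcal{F}(\mathsf{ab};\mathds{Z})$ with MacLane cohomology:
$$
\mathrm{Ext}^*_{\mathcal{F}(\mathsf{ab};\mathds{Z})}(I,I) \cong HML^*(\mathds{Z},\mathds{Z}) .
$$
This identification is the natural bridge from functor cohomology over $\mathsf{ab}$ to the classical homological invariants of the ring $\mathds{Z}$, and is precisely the input that converts a question about extensions between abelian-group-valued functors into a question about a ring spectrum.

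Second, I would apply Bökstedt's identification of MacLane homology with topological Hochschild homology, $HML_n(\mathds{Z},\mathds{Z}) \cong \pi_n(THH(H\mathds{Z}))$, together with Bökstedt's computation
$$
\pi_n\, THH(H\mathds{Z}) = \begin{cases} \mathds{Z} & n = 0, \\ \mathds{Z}/k & n = 2k-1,\ k \geq 1, \\ 0 & n > 0 \text{ even}. \end{cases}
$$
Then, to pass to cohomology, I would apply the universal coefficient sequence
$$
0 \to \mathrm{Ext}^1(HML_{n-1}(\mathds{Z},\mathds{Z}),\mathds{Z}) \to HML^n(\mathds{Z},\mathds{Z}) \to \mathrm{Hom}(HML_n(\mathds{Z},\mathds{Z}),\mathds{Z}) \to 0.
$$
Since all torsion in $\pi_*THH(H\mathds{Z})$ sits in odd homological degree, the $\mathrm{Hom}$-term vanishes for $n \geq 1$, and the $\mathrm{Ext}^1$-term contributes $\mathrm{Ext}^1(\mathds{Z}/l,\mathds{Z}) = \mathds{Z}/l$ in cohomological degree $2l$ (coming from $HML_{2l-1} = \mathds{Z}/l$), which recovers the claimed pattern (with the convention $\mathds{Z}/0 = \mathds{Z}$ for $l = 0$ accounting for $\mathrm{Hom}(\mathds{Z},\mathds{Z}) = \mathds{Z}$ in degree $0$).

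The main obstacle in this route is Bökstedt's computation of $\pi_* THH(H\mathds{Z})$ itself, which is a substantial input from stable homotopy theory. An alternative, more self-contained route — and the one actually followed by \cite{Franjou-Pira} — is to construct an explicit projective resolution of $I$ in $\mathcal{F}(\mathsf{ab};\mathds{Z})$ from the bar resolution of the abelian groups $\mathds{Z}^n$, entirely analogous to the resolution (\ref{202507301119}) used in the body of the present paper for $\mathfrak{a}$ in $\mathcal{F}(\mathsf{gr};\mathds{Z})$, and to carry out the resulting combinatorial cohomology calculation directly. This avoids stable homotopy theory but trades it for a delicate hands-on computation whose arithmetic is governed by divisibility properties of binomial coefficients, in the same spirit as the analyses in Sections \ref{202508081420}--\ref{202507041422} of this paper.
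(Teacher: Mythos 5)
Your outline is correct and coincides with the route the paper itself relies on: Theorem \ref{202508150909k} is quoted without proof from B\"okstedt and Franjou--Pirashvili, and the chain ``functor Ext $\cong$ MacLane cohomology $\cong$ dual of $\pi_{*}THH(H\mathds{Z})$ via universal coefficients'' (or, alternatively, the direct functor-homology computation of Franjou--Pirashvili) is exactly the standard argument behind that citation, and your bookkeeping of the degrees and torsion is accurate. One attribution fix: the identification $\mathrm{Ext}^{*}_{\mathcal{F}(\mathsf{ab};\mathds{Z})}(I,I)\cong HML^{*}(\mathds{Z},\mathds{Z})$ is due to Jibladze--Pirashvili, while the Pirashvili--Waldhausen theorem is the comparison $HML_{*}\cong THH_{*}$ that you invoke in the second step.
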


In \cite{Franjou-Pira}, Franjou and Pirashvili obtain the following result:
\begin{theorem}
[\mbox{\cite[Prop. 2.1]{Franjou-Pira}}]
\label{202508111718k}
    For $d\geq 2$, we have:
    \begin{align*}
  \mathrm{Ext}^{i}_{\mathcal{F}(\mathsf{ab};\mathds{Z})} ( I, S^{d} ) \cong
        \begin{cases}
            \mathds{Z}/ p & \mathrm{~if~} d = p^l \mathrm{~for~} p \mathrm{~a~prime~and~}  l \in \nat^* \mathrm{~and~} i\equiv 1\pmod{2d}
             , \\
            0  & \mathrm{otherwise}.
        \end{cases}
    \end{align*}
\end{theorem}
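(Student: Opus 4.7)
The plan is to construct an explicit projective resolution of $I$ in $\mathcal{F}(\mathsf{ab};\mathds{Z})$ in analogy with (\ref{202507301119}), but arising from the normalized bar resolution of a free \emph{abelian} group rather than a free group. Since $H_{*}(\mathds{Z}^{n}) = \Lambda^{*}(\mathds{Z}^{n})$ is concentrated in degrees $0$ through $n$ (not solely in degree $1$ as in the free-group case), this would produce a substantially richer, bigraded cochain complex whose total cohomology computes $\mathrm{Ext}^{*}_{\mathcal{F}(\mathsf{ab};\mathds{Z})}(I, S^{d})$; each column would encode exterior-power data through the Eilenberg--MacLane shuffle maps, in the spirit of MacLane cohomology.

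Next, I would reduce to a prime-by-prime analysis. Since $S^{d}$ is polynomial of degree $d$ and the rational computation forces these Ext-groups to be torsion for $d \geq 2$, the universal coefficient theorem allows detecting the $p$-primary parts via mod $p^{N}$ computations for all $N \in \nat^{*}$, following the strategy of Section \ref{202507041421} for the $\mathsf{gr}$ case. The resulting mod-$p$ complexes involve the $p$-factors $\binom{n}{r}_{p}$ of binomial coefficients, which are tractable via Kummer's theorem (Theorem \ref{Kummer}) together with the lemmas of Appendix \ref{202507071237}. For $d$ not a prime power, I would exploit Corollary \ref{202506301226} to show that the mod-$p$ complex is acyclic for every prime $p$, and then leverage the bigraded structure to propagate acyclicity to all cohomological degrees. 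For $d = p^{l}$, the case $i = 1$ is already supplied by Theorem \ref{202507171119} via Corollary \ref{202508041451}, giving the expected $\mathds{Z}/p$ there.

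The main obstacle is establishing the $2d$-periodicity $i \equiv 1 \pmod{2p^{l}}$ that produces one copy of $\mathds{Z}/p$ at each such degree. The natural strategy is to exhibit a periodicity class of degree $2p^{l}$ on which Yoneda multiplication acts as an isomorphism on the $p$-primary Ext-groups. A candidate arises from Bökstedt's class in $\mathrm{Ext}^{2p^{l}}_{\mathcal{F}(\mathsf{ab};\mathds{Z})}(I, I) \cong \mathds{Z}/p^{l}$ (Theorem \ref{202508150909k}), composed via a Frobenius/divided-power operation on the graded functor $S^{\bullet}$. Converting this into a genuine integral statement is delicate because the Frobenius twist is only naturally defined modulo $p$; one would need to control the integral lift carefully through the universal coefficient sequences, probably by induction on $l$, taking Bökstedt's theorem as the base case $d = 1$ and propagating to $d = p^{l}$ through the multiplicative structure of the exponential functor $S^{\bullet}$.
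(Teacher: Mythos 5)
First, note that the paper does not actually prove this statement: Theorem \ref{202508111718k} is recalled from \cite[Prop.~2.1]{Franjou-Pira}, so the paper's ``proof'' is a citation. Your proposal is therefore an attempt to reprove Franjou--Pirashvili's computation from scratch, and as it stands its very first step fails. The analogue of the resolution (\ref{202507301119}) does not exist over $\mathsf{ab}$: the truncation trick of Section \ref{202507301126} relies on $H_i(\mathds{Z}^{\ast n})=0$ for $i>1$, whereas for free abelian groups $H_{\ast}(\mathds{Z}^{n})\cong\Lambda^{\ast}(\mathds{Z}^{n})$ is nonzero in all degrees up to $n$, so the bar-type complex of representable projectives is not a resolution of $I$. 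Your assertion that the resulting ``bigraded complex'' has total cohomology equal to $\mathrm{Ext}^{*}_{\mathcal{F}(\mathsf{ab};\mathds{Z})}(I,S^{d})$ would require a hypercohomology spectral sequence whose input already consists of Ext-groups against exterior powers, i.e.\ computations of the same order of difficulty as the target; no identification of the abutment or control of differentials is offered.

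The later steps also do not transfer from the $\mathsf{gr}$ setting. The mod-$p^{N}$ strategy of Section \ref{202507041421} worked because $\AC$ is an explicit \emph{bounded} complex of finitely generated free abelian groups with binomial-coefficient differentials and finite cohomology (Lemma \ref{202510071626}); here the answer is nonzero in infinitely many degrees, so boundedness and finiteness are unavailable, and Corollary \ref{202506301226} (a statement about $\gcd\{\binom{d}{k}\}$) says nothing about the differentials of whatever complex step one would produce --- in particular no gcd-type argument can generate the congruence condition $i\equiv 1\pmod{2d}$. Finally, the periodicity is the real content of the theorem, and your sketch leaves it entirely open: exhibiting an integral class of degree $2p^{l}$ (built from the Bökstedt class of Theorem \ref{202508150909k} and a Frobenius-type operation) whose Yoneda action is an isomorphism on $p$-primary Ext is precisely the hard point, and acknowledging that the Frobenius twist only exists mod $p$ and that the integral lift must be ``controlled carefully, probably by induction'' names the obstacle without overcoming it. If you want a self-contained argument, you should follow the route of \cite{Franjou-Pira} (comparison with stable derived functor/Mac Lane cohomology computations) rather than mimic the explicit-complex method of Section \ref{202507071042}, which is tailored to the vanishing of higher homology of free groups.
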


\begin {table}[h] 
\[\scalebox{0.88}{
\begin{tabular}{|c|c|c|c|c|c|c|c|c|c|c|c|c|c|}
\hline
$d ~\backslash~ i$ & 0 & 1 & 2 & 3 & 4 & 5 & 6 & 7 & 8 & 9 & 10 & 11 & $\cdots$ \\
\hline
1 & $\mathds{Z}$ & 0 & 0 & 0 & $\mathds{Z}/2$ & 0 & $\mathds{Z}/3$ & 0&$\mathds{Z}/4$& 0& $\mathds{Z}/5$ & 0 & $\cdots$  \\
\hline
2 & 0 & $\mathds{Z}/2$ & 0 & 0 & 0 & $\mathds{Z}/2$ & 0  & 0&0&$\mathds{Z}/2$& 0 & 0 & $\cdots$\\
\hline
3 & 0 & $\mathds{Z}/3$ & 0 & 0 & 0 & 0 & 0 &$\mathds{Z}/3$  & 0 & 0& 0 & 0 & $\cdots$ \\
\hline
4 & 0 & $\mathds{Z}/2$ & 0& 0 & 0 & 0 & 0 & 0 & 0 &$\mathds{Z}/2$ & 0 & 0 & $\cdots$ \\
\hline
5 & 0 & $\mathds{Z}/5$ & 0 & 0 & 0 & 0 & 0 & 0 & 0 &0& 0& $\mathds{Z}/5$& $\cdots$ \\
\hline
6 & 0 & 0 & 0 & 0 & 0 & 0& 0  & 0 & 0 &0& 0 & 0 & $\cdots$\\
\hline
7 & 0 & $\mathds{Z}/7$ & 0 & 0 & 0 & 0 & 0 & 0 & 0 &0& 0& 0& $\cdots$ \\
\hline
$\vdots$ & $\vdots$ & $\vdots$ & $\vdots$ & $\vdots$ & $\vdots$ & $\vdots$ & $\vdots$ & $\vdots$  & $\vdots$ &$\vdots$& $\vdots$ & $\vdots$ & $\ddots$ \\
\hline
\end{tabular}
}
\]
\caption { $\mathrm{Ext}^{i}_{\mathcal{F}(\mathsf{ab};\mathds{Z})} ( I, S^{d} )$} \label{20250801525}
\end{table}

We give, in Table \ref{20250801525}, the computations of $\mathrm{Ext}^{i}_{\mathcal{F}(\mathsf{ab};\mathds{Z})} ( I, S^{d} )$ for small values of $i$ and $d$. It is noteworthy to compare Table \ref{20250801525} with Table \ref{20250801529}.
The aforementioned theorems and the direct computation of $\mathrm{Ext}^{0}_{\mathcal{F}(\mathsf{gr};\mathds{Z})} ( \mathfrak{a}, S^{d} \circ \mathfrak{a}) = H^0 ( \AC)$ imply that, in both tables, the columns of $i=0$ are identical. 
Furthermore, Corollary \ref{202508041451} establishes that their columns of $i=1$ are identical.
In Table \ref{20250801529}, the entries above the diagonal is entirely trivial while, in Table \ref{20250801525}, infinitely many nontrivial entries appear in each row by Theorems \ref{202508150909k} and \ref{202508111718k}.
Analogously, the lower triangle of Table \ref{20250801525}, except the columns $i=0$ and $i=1$, is trivial, whereas that of Table \ref{20250801529} exhibits a variety of nontrivial values, whose pattern is not fully understood.

In the rest of this appendix, we construct an explicit extension that generates $ \mathrm{Ext}^{1}_{\mathcal{F}(\mathsf{ab};\mathds{Z})} (I, S^{p^l} )$ for $p\in \prim$ and $l \in \nat^*$.
To do that, we give a review of a generator-relation description of the category $\mathsf{ab}$.
The category $\mathsf{ab}$ is a \texttt{PROP} with respect to the symmetric strict monoidal structure given by the direct sum, denoted by $\oplus$.
In \cite{Pira}, Pirashvili gives a presentation of $\mathsf{ab}$ as a \texttt{PROP}: the \texttt{PROP} $\mathsf{ab}$ is freely generated by a bicommutative (i.e. commutative and cocommutative) Hopf algebra object $1 \in \mathsf{ab}$.
The structure morphisms of the Hopf algebra object $1$ are given by:
\begin{enumerate}
\item the counit $\epsilon : 1 \to 0$ corresponding to $\mathds{Z} \to \{0\}$;
\item the comultiplication $\Delta: 1 \to 2$ corresponding to the diagonal map $\mathds{Z} \to \mathds{Z}^2$;
\item the unit $\eta : 0 \to 1$ corresponding to $\{0\} \to \mathds{Z}$;
\item the multiplication $\nabla: 2 \to 1$ corresponding to $\mathds{Z}^2 \to \mathds{Z} ; (x,y) \mapsto x+y$.
\item the antipode $S : 1 \to 1$ corresponding to $\mathds{Z} \to \mathds{Z}; x \mapsto -x$;
\end{enumerate}
These satisfy the following:
\begin{itemize}
    \item the commutative algebra axioms of $(\nabla, \eta)$ and the cocommutative coalgebra axioms of $(\Delta, \epsilon)$;
    \item the compatibility of $(\nabla, \eta)$ and $(\Delta, \epsilon)$, i.e.
    $$\Delta \circ \nabla=(\nabla \oplus \nabla)\circ (1 \oplus \tau \oplus 1) \circ (\Delta \oplus \Delta)$$
    where $\tau: 2 \to 2$ is the morphism switching the generators, and
    $$\epsilon \circ \nabla=\epsilon \oplus \epsilon; \qquad \Delta \circ \eta=\eta \oplus \eta; \qquad \epsilon \circ \eta=\mathrm{id};$$
    \item the antipode axiom on $S$.
\end{itemize}

In Proposition \ref{202508111122k}, we define the functor  $E_d: \mathsf{ab}\to \mathsf{Ab}$. Since this functor is not symmetric monoidal, we need to  give a presentation of  $\mathsf{ab}$ as a {\it category}.

\begin{notation} \label{202508111240k}
    For a morphism $f$ in $\mathsf{Ab}$ and $n,m \in \nat$, let ${}_{n}[f]_{m} {:=} \mathrm{id}_{n} \oplus f \oplus \mathrm{id}_{m}$.
\end{notation}

\begin{Defn} \label{202508061713k}
    Let $\mathcal{G} {:=} \{\tau,  \nabla, \Delta, \epsilon, \eta ,S \}$.
    We define $\Gamma = (V_{\Gamma} ,E_{\Gamma})$ as a directed graph whose vertex set is $V_{\Gamma} = \nat$ and the edge set is $$E_{\Gamma} = \{ {}_{n}[f]_{m} \mid f \in \mathcal{G} , (n,m) \in \nat^2 \} .$$
    In particular, for $(f:s \to t) \in \mathcal{G}$ and $n,m \in \nat$, ${}_{n}[f]_{m}$ is an edge of $\Gamma$ from $(s+n+m)$ to $(t + n+m)$.
\end{Defn}

\begin{notation}
    Let $\mathcal{C}_{\Gamma}$ be the category freely generated by the directed graph $\Gamma$. 
\end{notation}

We have a functor $\mathcal{C}_{\Gamma} \to \mathsf{ab}$ which assigns the morphism viewed in $\mathsf{ab}$ to each edge of $\Gamma$.
This functor is full by Pirashvili's result above.
Hence, when we construct a functor on $\mathsf{ab}$, we shall start with a construction of that on $\mathcal{C}_{\Gamma}$, and show that it factors through $\mathcal{C}_{\Gamma} \to \mathsf{ab}$.
For this purpose, we give a description of the relation on morphisms in $\mathcal{C}_{\Gamma}$ induced by the functor $\mathcal{C}_{\Gamma} \to \mathsf{ab}$.

\begin{Defn} \label{202508111530}
    We define a relation $\mathcal{R}$ on the morphisms of $\mathcal{C}_{\Gamma}$.
    If we denote by $g \sim_{\mathcal{R}} h$, or merely $g \sim h$, for $g,h \in \mathcal{C}_{\Gamma}$, then $\mathcal{R}$ consists of the following where $n,m \in \nat$:
    \begin{itemize}
        \item the monoidality:
        \begin{align*} 
            {}_{n}[f_1]_{a+t_2+m} \circ {}_{n+s_1+a}[f_2]_{m} \sim {}_{n+t_1+a}[f_2]_{m} \circ {}_{n}[f_1]_{a+s_2+m} ,
        \end{align*}
        where $f_1: s_1 \to t_1, f_2 : s_2 \to t_2$ lie in $\mathcal{G}$ and $a \in \nat$;
        \item the symmetry relations:
        \begin{align*} 
            {}_{n}[\tau \circ \tau]_{m} \sim \mathrm{id}_{n+2+m} , \quad {}_{n}[\tau]_{m+1} \circ {}_{n+1}[\tau]_{m} \circ {}_{n}[\tau]_{m+1} \sim {}_{n+1}[\tau]_{m} \circ {}_{n}[\tau]_{m+1} \circ {}_{n+1}[\tau]_{m} ;
        \end{align*}
        \item the naturality of the symmetry;
    \end{itemize}
    and the axioms of a bicommutative Hopf algebra object:
    \begin{itemize}
        \item the commutative algebra axioms of $(\nabla,\eta)$:
        $$
        {}_{n}[\nabla]_{m} \circ {}_{n}[\nabla]_{m+1} \sim  {}_{n}[\nabla]_{m} \circ  {}_{n+1}[\nabla]_{m} , \quad {}_{n}[\nabla]_{m} \circ {}_{n}[\eta]_{m+1} \sim \mathrm{id}_{n+1+m} \sim {}_{n}[\nabla]_{m} \circ {}_{n+1}[\eta]_{m} ;
        $$
        \item the cocommutative coalgebra axioms of $(\Delta,\epsilon)$:
        $$
        {}_{n}[\Delta]_{m+1} \circ {}_{n}[\Delta]_{m} \sim {}_{n+1}[\Delta]_{m} \circ {}_{n}[\Delta]_{m} , \quad {}_{n} [\epsilon]_{m+1} \circ {}_{n}[\Delta]_{m} \sim {}_{n+1}[\epsilon]_{m} \circ {}_{n}[\Delta]_{m} ;
        $$
        \item the antipode axiom on $S$:
        $$
        {}_{n}[\nabla]_{m} \circ {}_{n}[S]_{m+1} \circ {}_{n}[\Delta]_{m} \sim {}_{n}[\eta]_{m} \circ {}_{n}[\epsilon]_{m} \sim {}_{n}[\nabla]_{m} \circ {}_{n+1}[S]_{m} \circ {}_{n}[\Delta]_{m} ;
        $$
        \item the compatibility of $(\nabla, \eta)$ and $(\Delta, \epsilon)$
        \begin{align} \label{202508072052k}
            {}_{n}[\Delta]_{m} \circ  {}_{n}[\nabla]_{m} \sim {}_{n+1}[\nabla]_{m} \circ {}_{n}[\nabla]_{m+2} \circ {}_{n+1}[\tau]_{m+1} \circ {}_{n+2}[\Delta]_{m} \circ {}_{n}[\Delta]_{m+1},
        \end{align}
        
        \begin{align} \label{202508151051k}
            {}_{n}[\epsilon]_{m} \circ {}_{n}[\eta]_{m} = \mathrm{id}_{n+1+m} ,
        \end{align}
        and
        \begin{align} \label{202508111506k}
            {}_{n}[\epsilon]_{m} \circ {}_{n}[\nabla]_{m} \sim {}_{n}[\epsilon]_{m} \circ {}_{n}[\epsilon]_{m+1} , \qquad {}_{n}[\Delta]_{m} \circ {}_{n}[\eta]_{m} \sim {}_{n}[\eta]_{m+1} \circ {}_{n}[\eta]_{m} . 
        \end{align}
    \end{itemize}
    We define $\overline{\mathcal{R}}$ the equivalence relation that is generated by $\mathcal{R}$ and compatible with compositions.
\end{Defn}

\begin{remark}
    One may realize that the compatibility of $(\nabla,\eta)$ and $(\Delta,\epsilon)$ is slightly different from what we explained above. 
    In fact, the notations $\nabla \oplus \nabla, \Delta \oplus \Delta, \epsilon \oplus \epsilon, \eta \oplus \eta$ do not make sense in $\mathcal{C}_{\Gamma}$ by definition of $\mathcal{C}_{\Gamma}$.
    The relations (\ref{202508072052k}) and (\ref{202508111506k}) realize the compatibility by taking a representative of them with respect to the full functor $\mathcal{C}_{\Gamma} \to \mathsf{ab}$.
    The choice of such representatives does not affect the relation $\overline{\mathcal{R}}$ since, by the monoidality relation, we have, for example, ${}_{n+1}[\nabla]_{m} \circ {}_{n}[\nabla]_{m+2} \sim {}_{n}[\nabla]_{m+1} \circ {}_{n+2}[\nabla]_{m}$.
\end{remark}

Pirashvili's result implies that a functor $F$ defined on the category $\mathcal{C}_{\Gamma}$ factors through $\mathcal{C}_{\Gamma} \to \mathsf{ab}$ if and only if $F$ respects the relation $\overline{\mathcal{R}}$.
In particular, the precomposition of $S^d$ ($I$, resp.) with $\mathcal{C}_{\Gamma} \to \mathsf{ab}$, which is denoted by the same notation $S^d$ ($I$, resp.), respects the relation $\overline{\mathcal{R}}$.

Since $\mathcal{C}_{\Gamma}$ is the category freely generated by $\Gamma$, to define a functor on $\mathcal{C}_{\Gamma}$, it is sufficient to define it on the vertices and the edges of $\Gamma$. 
\begin{Defn} \label{20250801636}
    Let $d = p^{l}$ for some $l \in \nat^*$.
    We define a functor $E_d^\prime : \mathcal{C}_{\Gamma} \to \mathsf{Ab}$.
    For an object $s \in \mathcal{C}_{\Gamma}$, we set $E_{d}^\prime (s) {:=}  S^{d} (s) \oplus I(s)$.
    For an edge $g : s \to t$ of $\Gamma$, we define a map $E_{d}^\prime(g) : E_{d}^\prime(s) \to E_{d}^\prime(t)$ by the following matrix expression
    \begin{align*}
        E_{d}^\prime ( g ) & {:=} 
        \begin{pmatrix}
            S^{d}( g ) & D_{d} (g) \\
            0 & I( g ) 
        \end{pmatrix}
    \end{align*}
    where $D_{d}(g) : I(s) \to  S^{d}(t)$ is the map defined below by using Notation \ref{202508111240k}. For $\{e_1, \ldots, e_k\}$ the canonical basis of $\mathds{Z}^k$, we define: 
\begin{itemize}
    \item
     \begin{align*}
         D_{d} ( {}_{n}[S]_{m} ) (e_i) {:=} 
            \begin{cases}
                \frac{1+(-1)^{d}}{p}e_{n+1}^{d} & \mathrm{if\ } i=n+1, \\
                0 & \mathrm{otherwise} .
            \end{cases}
    \end{align*}
    \item  
    \begin{align*}
        D_{d} ( {}_{n}[\Delta]_{m} )(e_i) {:=}
        \begin{cases}
            \sum^{d-1}_{k=1} \frac{1}{p} \binom{p^{l}}{k} e_{n+1}^{k} e_{n+2}^{d-k} & \mathrm{if\ } i=n+1, \\
            0 & \mathrm{otherwise}.
        \end{cases}
    \end{align*}
    \item for other edges $g$ of $\Gamma$, we define $D_d (g) {:=} 0$. 
\end{itemize}
\end{Defn}

\begin{Lemma} \label{202508111628}
    The functor $E_{d}^\prime$ respects the $\mathcal{R}$-relation given by the monoidality.
    In other words, we have        
    \begin{align*} 
            E_{d}^\prime ({}_{n}[f_1]_{a+t_2+m}) \circ E_{d}^\prime ({}_{n+s_1+a}[f_2]_{m}) = E_{d}^\prime ({}_{n+t_1+a}[f_2]_{m} ) \circ E_{d}^\prime ({}_{n}[f_1]_{a+s_2+m}) ,
    \end{align*}
    where $f_1: s_1 \to t_1, f_2 : s_2 \to t_2$ lie in $\mathcal{G} = \{\tau,  \nabla, \Delta, \epsilon, \eta ,S \}$ and $n,m,a \in \nat$.
\end{Lemma}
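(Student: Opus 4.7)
I would approach this lemma via direct matrix computation followed by a short case analysis on $(f_1, f_2) \in \mathcal{G}^2$. Writing $E_d^\prime(g)$ as the $2 \times 2$ upper-triangular block matrix with $S^d(g)$ and $I(g)$ on the diagonal and $D_d(g)$ as the off-diagonal entry, the composition of two such matrices has diagonal blocks $S^d(gg^\prime)$ and $I(gg^\prime)$, together with off-diagonal entry $S^d(g) \circ D_d(g^\prime) + D_d(g) \circ I(g^\prime)$. The diagonal-block identity in the statement reduces to the monoidality of $S^d$ and $I$ as functors on $\mathsf{ab}$, which is automatic since ${}_n[f_1]_{a+t_2+m} \circ {}_{n+s_1+a}[f_2]_m = {}_{n+t_1+a}[f_2]_m \circ {}_n[f_1]_{a+s_2+m}$ already holds in $\mathsf{ab}$.

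The real content is thus the off-diagonal identity
\begin{align*}
& S^d({}_n[f_1]_{a+t_2+m}) \circ D_d({}_{n+s_1+a}[f_2]_m) + D_d({}_n[f_1]_{a+t_2+m}) \circ I({}_{n+s_1+a}[f_2]_m) \\
=\ & S^d({}_{n+t_1+a}[f_2]_m) \circ D_d({}_n[f_1]_{a+s_2+m}) + D_d({}_{n+t_1+a}[f_2]_m) \circ I({}_n[f_1]_{a+s_2+m}) .
\end{align*}
By Definition \ref{20250801636}, $D_d(g)$ vanishes unless $g$ has the form ${}_\bullet[\Delta]_\bullet$ or ${}_\bullet[S]_\bullet$; so when $f_1, f_2 \notin \{\Delta, S\}$ the identity is $0=0$. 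In the remaining cases, I would evaluate both sides on each basis vector $e_i$ of the source. The crucial geometric fact is that $f_1$ acts on positions $\{n+1, \ldots, n+s_1\}$ whereas $f_2$ acts on positions starting at $n+s_1+a+1$, and these slots are disjoint. Consequently, $D_d({}_n[f_1]_\bullet)$ is supported on $e_{n+1}$ with image supported on monomials in $\{e_{n+1}, e_{n+2}\}$, and these are fixed both by $I({}_{n+s_1+a}[f_2]_m)$ and by $S^d({}_{n+t_1+a}[f_2]_m)$ (the latter because $f_2$ acts strictly to the right of position $n+t_1$). The symmetric statement holds with $f_1$ and $f_2$ interchanged.

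Putting this together, a short case analysis on whether $f_1$, $f_2$, both, or neither belong to $\{\Delta, S\}$, and evaluating on $e_{n+1}$, on $e_{n+s_1+a+1}$, and on any other $e_i$ (where all four terms vanish), confirms the identity. The only obstacle is purely notational: one has to track the index shifts caused by $I(\Delta)$, which sends $e_{n+1} \mapsto e_{n+1} + e_{n+2}$ and shifts later coordinates by one, and to verify that these shifts align correctly on both sides. Because the supports of the $D_d$ maps and the slots on which $f_2$ acts are disjoint, every relevant composition simplifies to a direct application of Definition \ref{20250801636}, and no further cancellation or arithmetic (such as manipulation of the coefficients $\tfrac{1}{p}\binom{p^l}{k}$) is required.
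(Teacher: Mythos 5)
Your proposal is correct and follows essentially the same route as the paper: reduce via the upper-triangular block structure to the off-diagonal identity $S^{d}(f)\circ D_{d}(g)+D_{d}(f)\circ I(g)$ for the two orderings, note it is trivial unless $f_1$ or $f_2$ lies in $\{\Delta,S\}$, and verify the remaining cases by evaluating on basis vectors (the paper phrases this as the equality of the maps $A(f,g)$ and checks a representative case directly). The only caveat is that your "symmetric statement with $f_1$ and $f_2$ interchanged" is not literally about monomials being \emph{fixed}: when $f_1$ changes arity (e.g.\ $\Delta,\nabla,\epsilon,\eta$) the support index $n+s_1+a+1$ of $D_d({}_{n+s_1+a}[f_2]_m)$ is shifted to $n+t_1+a+1$, and the verification is exactly the index bookkeeping you flag at the end, so this is an acknowledged imprecision rather than a gap.
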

\begin{proof}
    Let $A(f,g) {:=} S^d(f) \circ D_d (g) + D_d (f) \circ I(g)$ for composable morphisms $f,g$ in $\mathcal{C}_{\Gamma}$.
    Comparing the entries of matrices, the statement is equivalent with
    \begin{align*}
    A ( {}_{n}[f_1]_{a+t_2+m} ,~ {}_{n+s_1+a}[f_2]_{m}) = A ( {}_{n+t_1+a}[f_2]_{m} ,~ {}_{n}[f_1]_{a+s_2+m}) 
    \end{align*}
    since $S^d$ and $I$ respect the relation $\overline{\mathcal{R}}$.
    This is trivial for $f_1, f_2 \in \mathcal{G} \backslash \{ S, \Delta \}$ since $D_d$ is defined to be zero for them.
    One can directly verify this for general $f_1,f_2 \in \mathcal{G}$.
    For instance, consider $f_1 = \Delta$ and $f_2 \in \mathcal{G} \backslash \{ S, \Delta \}$, especially, $s_1 = 1,t_1 = 2$.
    We then have:
    \begin{align*}
        &A ( {}_{n}[\Delta]_{a+t_2+m} ,~ {}_{n+s_1+a}[f_2]_{m}) (e_{i}) =  D_d ({}_{n}[\Delta]_{a+t_2+m}) \circ I({}_{n+s_1+a}[f_2]_{m})(e_{i}) , \\
        =& D_d ({}_{n}[\Delta]_{a+t_2+m})(e_{i}) = 
        \begin{cases}
            \sum^{d-1}_{k=1} \frac{1}{p} \binom{p^{l}}{k} e_{n+1}^{k} e_{n+2}^{d-k} & i= n+1 ,\\
            0 & i \neq n+1
        \end{cases}
    \end{align*}
    On the other hand, we have:
    \begin{align*}
        &A ( {}_{n+t_1+a}[f_2]_{m} ,~ {}_{n}[\Delta]_{a+s_2+m}) (e_i) = S^d ({}_{n+t_1+a}[f_2]_{m}) \circ D_d ({}_{n}[\Delta]_{a+s_2+m}) (e_i) , \\
        =& 
        \begin{cases}
            \sum^{d-1}_{k=1} \frac{1}{p} \binom{p^{l}}{k} e_{n+1}^{k} e_{n+2}^{d-k} & i= n+1 ,\\
            0 & i \neq n+1
        \end{cases}
    \end{align*}
\end{proof}

\begin{prop} \label{202508111122k}
The functor $E_d^\prime : \mathcal{C}_{\Gamma} \to \mathsf{Ab}$ factors through $\mathcal{C}_{\Gamma} \to \mathsf{ab}$.
Hence, it induces a functor $E_d \in \mathcal{F}(\mathsf{ab};\mathds{Z})$.
\end{prop}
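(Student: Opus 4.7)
The strategy is to verify that $E_d'$ respects each generating relation of $\overline{\mathcal{R}}$ listed in Definition \ref{202508111530}; the universal property of $\mathcal{C}_{\Gamma}$ then produces the desired factorization through $\mathcal{C}_{\Gamma} \to \mathsf{ab}$. For any composable chain of edges $g_1, \ldots, g_n$ in $\Gamma$, the block-triangular form of $E_d'$ on each edge multiplies out, by induction on $n$, to
\begin{align*}
E_d'(g_n \circ \cdots \circ g_1) = \begin{pmatrix} S^d(g_n \cdots g_1) & \Psi(g_n, \ldots, g_1) \\ 0 & I(g_n \cdots g_1) \end{pmatrix}, \quad \Psi(g_n, \ldots, g_1) = \sum_{i=1}^{n} S^d(g_n \cdots g_{i+1}) \circ D_d(g_i) \circ I(g_{i-1} \cdots g_1) .
\end{align*}
Since $S^d$ and $I$ both factor through $\mathcal{C}_{\Gamma} \to \mathsf{ab}$, only the upper-right entry $\Psi$ requires checking: for each generating relation $g_n \cdots g_1 \sim h_m \cdots h_1$ of $\mathcal{R}$, one must verify $\Psi(g_n, \ldots, g_1) = \Psi(h_m, \ldots, h_1)$.

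The plan is as follows. The monoidality relations are exactly the content of Lemma \ref{202508111628}. Any generating relation whose edges all lie in $\mathcal{G} \setminus \{S, \Delta\}$ produces $\Psi = 0$ on both sides (since $D_d$ vanishes there) and is therefore automatic; this covers the symmetry relations, the commutative algebra axioms of $(\nabla, \eta)$, the naturality of $\tau$ with respect to $\tau, \nabla, \eta, \epsilon$, the equality (\ref{202508151051k}), and the first relation of (\ref{202508111506k}). The remaining relations to verify are: the naturality of $\tau$ with respect to $S$ and $\Delta$; the cocommutative coalgebra axioms of $(\Delta, \epsilon)$; the antipode axiom; the relation $\Delta \circ \eta \sim \eta \circ \eta$ in (\ref{202508111506k}); and the bialgebra compatibility (\ref{202508072052k}).

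Each of these is checked by a direct computation on the canonical basis, using the standard formulas $S^d(\nabla)(e_j^k e_{j+1}^{d-k}) = e_j^d$, $S^d(\Delta)(e_j^d) = (e_j + e_{j+1})^d$, $S^d(S)(e_j^d) = (-1)^d e_j^d$ together with the explicit definitions of $D_d(\Delta)$ and $D_d(S)$. As an illustration, the antipode axiom ${}_0[\nabla]_0 \circ {}_0[S]_1 \circ {}_0[\Delta]_0 \sim {}_0[\eta]_0 \circ {}_0[\epsilon]_0$ evaluated on $e_1$ yields, for the upper-right entry,
\begin{align*}
\Psi = \left( \sum_{k=1}^{d-1} \tfrac{1}{p}\tbinom{p^l}{k}(-1)^{k} + \tfrac{1+(-1)^d}{p} \right) e_1^d = \tfrac{1}{p}\left[ -1 - (-1)^d + 1 + (-1)^d \right] e_1^d = 0,
\end{align*}
using $\sum_{k=0}^{d}\binom{d}{k}(-1)^k = 0$, which matches $\Psi = 0$ for the right-hand side $\eta \circ \epsilon$. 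The counit axiom for $(\Delta, \epsilon)$ is immediate since $S^d$ of either composite ${}_n[\epsilon]_{m+1}$ or ${}_{n+1}[\epsilon]_m$ kills every monomial $e_{n+1}^k e_{n+2}^{d-k}$ produced by $D_d(\Delta)$ when $1 \leq k \leq d-1$. The cocommutative coassociativity amounts, after equating coefficients of $e_1^a e_2^b e_3^c$ (with $a+b+c = d$), to the identity $\binom{d}{a+b}\binom{a+b}{a} = \binom{d}{a}\binom{d-a}{b}$ together with the boundary cases $c = 0$ and $a = 0$ handled separately by the $D_d \circ I$ terms.

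The main obstacle, both in length and combinatorial content, is the bialgebra compatibility (\ref{202508072052k}). The left-hand side contributes $\Psi = D_d(\Delta) \circ I(\nabla)$, which on each of the two generators of the source $\mathds{Z}^2$ equals $\sum_{k=1}^{d-1} \tfrac{1}{p}\binom{p^l}{k} e_1^{k} e_2^{d-k}$. On the right-hand side, the five-fold composition contributes nontrivially to $\Psi$ only at the two occurrences of $\Delta$; for each of these, the monomials $\tfrac{1}{p}\binom{p^l}{k} e_a^{k} e_b^{d-k}$ produced by $D_d(\Delta)$ are pushed forward through one $S^d(\tau)$ and two $S^d(\nabla)$'s. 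Because $\nabla$ identifies its two source generators with the same target generator, each pushed-forward monomial collapses back to $e_1^{k} e_2^{d-k}$ with the original coefficient $\tfrac{1}{p}\binom{p^l}{k}$, and the two $\Delta$-contributions match exactly the sum coming from the left-hand side on $e_1$ and $e_2$ respectively. Once these local verifications are complete, $E_d'$ descends to a well-defined functor $E_d \in \mathcal{F}(\mathsf{ab};\mathds{Z})$, as claimed.
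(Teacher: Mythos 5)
Your proposal is correct and follows essentially the same route as the paper: reduce to checking the generating relations of $\overline{\mathcal{R}}$, observe that only the upper-right ($D_d$) entry matters because $S^d$ and $I$ already factor through $\mathsf{ab}$, dispose of all relations not involving $S$ or $\Delta$ by the vanishing of $D_d$ together with Lemma \ref{202508111628}, and verify the remaining ones (coassociativity, counit, cocommutativity, the antipode axiom, $\Delta\circ\eta$, and the compatibility (\ref{202508072052k})) by direct evaluation on basis elements. The only differences are cosmetic: you package the computation via the cocycle-type formula for $\Psi$ and settle coassociativity by the identity $\binom{d}{a+b}\binom{a+b}{a}=\binom{d}{a}\binom{d-a}{b}$, where the paper evaluates on $(0,e_i)$ and compares the two sides after tensoring with $\mathds{Q}$, using that $E_d^\prime(n+2)$ is torsion free.
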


\begin{proof}
    It suffices to prove that the functor $E_d^\prime$ respects the relation $\mathcal{R}$ on morphisms of $\mathcal{C}_{\Gamma}$, introduced in Definition \ref{202508111530}.
    By Lemma \ref{202508111628}, the functor $E_d^\prime$ respects the $\mathcal{R}$-relation given by the monoidality.
    It is clear that $E_d^\prime$ respects the $\mathcal{R}$-relations induced by the symmetry relations since $D_d ({}_{n}[\tau]_{m}) = 0$ for $n,m \in \nat$; and both of $S^d$ and $I$ respect the relation $\overline{\mathcal{R}}$.
    In a similar fashion to the proof of Lemma \ref{202508111628}, one can prove that $E_d^\prime$ also respects the $\mathcal{R}$-relation given by the naturality of the symmetry.

    All that remains is to prove that $E_d^\prime$ respects the $\mathcal{R}$-relations arising from the axioms of a bicommutative Hopf algebra object.
    The functor $E_d^\prime$ respects those induced by the commutative algebra axioms of $(\nabla , \eta)$, (\ref{202508151051k}) and the first part of (\ref{202508111506k}), since $D_d ({}_{n}[f]_{m}) = 0$ for $f \in \{\tau, \nabla, \eta, \epsilon\}$ and $n,m \in \nat$; and both of $S^d$ and $I$ respect the relation $\overline{\mathcal{R}}$.
    We shall prove that $E_{d}^\prime$ respects the other $\mathcal{R}$-relations related to the cocommutative coalgebra axioms, the compatibility of $(\nabla,\eta)$ and $(\Delta, \epsilon)$ and the antipode axiom. 
    
    Note that, for a $\mathcal{R}$-relation $f_1 \circ \cdots \circ f_k \sim g_1 \circ \cdots \circ g_l : s \to t$ and an element $(x,0) \in E_{d}^\prime (s) {=}  S^{d} (s) \oplus I(s)$, we have 
    \begin{align*}
        E_d^\prime (f_1) \circ \cdots \circ E_d^\prime  (f_k)(x,0)&= (S^d (f_1) \circ \cdots \circ S^d (f_k) (x), 0)= (S^d (g_1) \circ \cdots \circ S^d(g_l) (x), 0)\\
        &=E_d^\prime (g_1) \circ \cdots \circ E_d^\prime  (g_l)(x,0)
    \end{align*}
    since $S^d$ respects the relation $\overline{\mathcal{R}}$. 
    Hence, when we prove $E_d^\prime (f_1) \circ \cdots \circ E_d^\prime  (f_k) = E_d^\prime (g_1) \circ \cdots \circ E_d^\prime  (g_l)$, it is sufficient to compare them by applying to the elements $(0,e_i) \in E_{d}^\prime (s)$ for $1 \leq i\leq s$.

    Let $n,m \in \nat$.
    We prove that $E_{d}^\prime$ respects the $\mathcal{R}$-relation of the coassociativity, i.e. ${}_{n+1}[\Delta]_{m} \circ {}_{n} [\Delta]_{m} \sim {}_{n}[\Delta]_{m+1} \circ {}_{n} [\Delta]_{m}$.
    On the one hand we have : 
    \begin{align*}
        &E_d^\prime  ({}_{n+1}[\Delta]_{m}) \circ E_d^\prime ({}_{n} [\Delta]_{m})(0,e_{n+1})
        =E_d^\prime ({}_{n+1}[\Delta]_{m})\left( \sum^{d-1}_{k=1} \frac{1}{p} \binom{p^{l}}{k} e_{n+1}^{k} e_{n+2}^{d-k}, ~ e_{n+1}+e_{n+2}\right)\\
        =&\left(  \sum^{d-1}_{k=1} \frac{1}{p} \binom{p^{l}}{k} e_{n+1}^{k} (e_{n+2}+e_{n+3})^{d-k}+ \sum^{d-1}_{k=1} \frac{1}{p} \binom{p^{l}}{k} e_{n+2}^{k} e_{n+3}^{d-k}, \quad e_{n+1}+e_{n+2}+e_{n+3}\right)
    \end{align*}
    On the other hand we have:
    \begin{align*}
       & E_d^\prime  ({}_{n}[\Delta]_{m+1} ) \circ E_d^\prime  ({}_{n} [\Delta]_{m})(0,e_{n+1})=E_d^\prime  ({}_{n}[\Delta]_{m+1} ) \left( \sum^{d-1}_{k=1} \frac{1}{p} \binom{p^{l}}{k} e_{n+1}^{k} e_{n+2}^{d-k}, ~ e_{n+1}+e_{n+2}\right)\\
        =&\left(  \sum^{d-1}_{k=1} \frac{1}{p} \binom{p^{l}}{k} (e_{n+1}+e_{n+2})^{k} e_{n+3}^{d-k}+ \sum^{d-1}_{k=1} \frac{1}{p} \binom{p^{l}}{k} e_{n+1}^{k} e_{n+2}^{d-k},\quad e_{n+1}+e_{n+2}+e_{n+3}\right)
    \end{align*}
    The equality in $E_d(n+2)$:\\
   \scalebox{.85}{\parbox{.8\linewidth}{
   \begin{align*}
        \sum^{d-1}_{k=1} \frac{1}{p} \binom{p^{l}}{k} e_{n+1}^{k} (e_{n+2}+e_{n+3})^{d-k}+ \sum^{d-1}_{k=1} \frac{1}{p} \binom{p^{l}}{k} e_{n+2}^{k} e_{n+3}^{d-k} =\sum^{d-1}_{k=1} \frac{1}{p} \binom{p^{l}}{k} (e_{n+1}+e_{n+2})^{k} e_{n+3}^{d-k}+ \sum^{d-1}_{k=1} \frac{1}{p} \binom{p^{l}}{k} e_{n+1}^{k} e_{n+2}^{d-k}
   \end{align*}
   }}\\
    follows from the fact that these two quantities viewed in $\mathbb{Q} \otimes E_d^\prime (n+2)$ are equal to 
    $$\dfrac{1}{p}\left( (e_{n+1}+e_{n+2}+e_{n+3})^d-e_{n+1}^d-e_{n+2}^d-e_{n+3}^d \right);$$
    and the fact that $E_d^\prime (n+2)$ is a free $\mathds{Z}$-module.

    For $i > n+1$ we have:
    \begin{align*}
        &E_d^\prime  ({}_{n+1}[\Delta]_{m}) \circ E_d^\prime ({}_{n} [\Delta]_{m})(0,e_i)=E_d^\prime  ({}_{n+1}[\Delta]_{m})\left( 0, e_{i+1}\right)=\left(  0,e_{i+2}\right)\\
        =& E_d^\prime  ({}_{n}[\Delta]_{m+1})\left( 0, e_{i+1}\right)= E_d^\prime  ({}_{n}[\Delta]_{m+1} ) \circ E_d^\prime  ({}_{n} [\Delta]_{m}) (0,e_i)
    \end{align*}
    For $i < n+1$, one can use a similar argument, so that we obtain:
    $$E_d^\prime  ({}_{n+1}[\Delta]_{m}) \circ E_d^\prime ({}_{n} [\Delta]_{m}) =E_d^\prime  ({}_{n}[\Delta]_{m+1} ) \circ E_d^\prime  ({}_{n} [\Delta]_{m}).$$

\vspace{2mm}
    We prove that $E_d^\prime$ respects the $\mathcal{R}$-relation of the counit axiom, i.e. ${}_{n}[\epsilon]_{m+1} \circ {}_{n} [\Delta]_{m} \sim \mathrm{id}_{n+m+1}$ and ${}_{n+1}[\epsilon]_{m} \circ {}_{n} [\Delta]_{m} \sim \mathrm{id}_{n+m+1}$.
    Here, we only consider the former.
    We have
    \begin{align*}
        E_d^\prime ({}_{n}[\epsilon]_{m+1}) \circ E_d^\prime ({}_{n} [\Delta]_{m})(0,e_{n+1})= E_d^\prime ({}_{n}[\epsilon]_{m+1})\left( \sum^{d-1}_{k=1} \frac{1}{p} \binom{p^{l}}{k} e_{n+1}^{k} e_{n+2}^{d-k}, e_{n+1}+e_{n+2}\right)=(0,e_{n+1}) .
    \end{align*}
    For $i>n+1$ we have:
    $E_d^\prime ({}_{n}[\epsilon]_{m+1}) \circ E_d^\prime ({}_{n} [\Delta]_{m})(0,e_i)= E_d^\prime (\epsilon \ast \mathrm{id}_{n})\left(0, e_{i+1}\right)=(0,e_i)$.
    For $i < n+1$, we have a similar argument, so we obtain $E_d^\prime ({}_{n}[\epsilon]_{m+1}) \circ E_d^\prime ({}_{n} [\Delta]_{m}) = E_d^\prime (\mathrm{id}_{n+m+1})$.

\vspace{2mm}
    We prove that $E_d^\prime$ respects the $\mathcal{R}$-relation of the cocommutativity of $\Delta$, i.e. ${}_{n} [\tau]_{m} \circ {}_{n} [\Delta]_{m} = {}_{n} [\Delta]_{m}$.
    By definitions, we have:
    \begin{align*}
        &E_d^\prime ({}_{n} [\tau]_{m}) \circ E_d^\prime ({}_{n} [\Delta]_{m}) (0,e_{n+1}) = E_d^\prime ({}_{n} [\tau]_{m}) \left( \sum^{d-1}_{k=1} \frac{1}{p} \binom{p^{l}}{k} e_{n+1}^{k} e_{n+2}^{d-k}, e_{n+1}+e_{n+2}\right) , \\
        =& \left( \sum^{d-1}_{k=1} \frac{1}{p} \binom{p^{l}}{k} e_{n+2}^{k} e_{n+1}^{d-k}, e_{n+2}+e_{n+1}\right) = \left( \sum^{d-1}_{k=1} \frac{1}{p} \binom{p^{l}}{p^{l}-k} e_{n+1}^{d-k}e_{n+2}^{k} , e_{n+1}+e_{n+2}\right) , \\
        =& \left( \sum^{d-1}_{k=1} \frac{1}{p} \binom{p^{l}}{k} e_{n+1}^{k} e_{n+2}^{d-k}, e_{n+1}+e_{n+2}\right) =  E_d^\prime ({}_{n} [\Delta]_{m}) (0,e_{n+1}) .
    \end{align*}
    For $i > n+1$, we have $E_d^\prime ({}_{n} [\tau]_{m}) \circ E_d^\prime ({}_{n} [\Delta]_{m}) (0,e_i) = E_d^\prime ({}_{n} [\tau]_{m}) ( 0, e_{i+1}) = ( 0, e_{i+1}) = E_d^\prime ({}_{n} [\Delta]_{m}) (0,e_i)$.
    By a similar argument for $i < n+1$, one can obtain $E_d^\prime ({}_{n} [\tau]_{m}) \circ E_d^\prime ({}_{n} [\Delta]_{m}) = E_d^\prime ({}_{n} [\Delta]_{m})$.

\vspace{2mm}
    We prove that $E_d^\prime$ respects the $\mathcal{R}$-relation of (\ref{202508072052k}).
    On the one hand, for $r \in \{1,2\}$, we have:
    \begin{align*}
        E_d^\prime ({}_{n}[\Delta]_{m}) \circ E_d^\prime({}_{n}[\nabla]_{m}) (0,e_{n+r}) = E_d^\prime ({}_{n}[\Delta]_{m})(0,e_{n+1}) = \left( \sum^{d-1}_{k=1} \frac{1}{p} \binom{p^{l}}{k} e_{n+1}^{k} e_{n+2}^{d-k}, ~ e_{n+1}+e_{n+2}\right) .
    \end{align*}
    On the other hand, one can directly verify the following:
    {\small
    \begin{align*}
        E_d^\prime ({}_{n+1}[\tau]_{m+1}) \circ E_d^\prime ({}_{n+2}[\Delta]_{m}) \circ E_d^\prime ({}_{n}[\Delta]_{m+1}) (0,e_{n+r}) =
        \begin{cases}
            \left( \sum^{d-1}_{k=1} \frac{1}{p} \binom{p^{l}}{k} e_{n +1}^{k} e_{n+3}^{d-k}, e_{n+1} + e_{n+3} \right) & r=1 , \\
            \left( \sum^{d-1}_{k=1} \frac{1}{p} \binom{p^{l}}{k} e_{n +2}^{k} e_{n+4}^{d-k}, e_{n+2} + e_{n+4} \right) & r=2.
        \end{cases} 
    \end{align*}}
    Hence, we obtain:
    \begin{align*}
        &E_d^\prime ({}_{n+1}[\nabla]_{m}) \circ E_d^\prime ({}_{n}[\nabla]_{m+2}) \circ E_d^\prime ({}_{n+1}[\tau]_{m+1}) \circ E_d^\prime ({}_{n+2}[\Delta]_{m}) \circ E_d^\prime ({}_{n}[\Delta]_{m+1}) (0,e_{n+r}) , \\
        =& \left( \sum^{d-1}_{k=1} \frac{1}{p} \binom{p^{l}}{k} e_{n+1}^{k} e_{n+2}^{d-k}, ~ e_{n+1}+e_{n+2}\right) .
    \end{align*}
    For $i > n+2$, we have:
    \begin{align*}
        &E_d^\prime ({}_{n+1}[\nabla]_{m}) \circ E_d^\prime ({}_{n}[\nabla]_{m+2}) \circ E_d^\prime ({}_{n+1}[\tau]_{m+1}) \circ E_d^\prime ({}_{n+2}[\Delta]_{m}) \circ E_d^\prime ({}_{n}[\Delta]_{m+1}) (0,e_{i}) , \\
        =& E_d^\prime ({}_{n+1}[\nabla]_{m}) \circ E_d^\prime ({}_{n}[\nabla]_{m+2}) \circ E_d^\prime ({}_{n+1}[\tau]_{m+1}) \circ E_d^\prime ({}_{n+2}[\Delta]_{m})  (0,e_{i+1}) , \\
        =& E_d^\prime ({}_{n+1}[\nabla]_{m}) \circ E_d^\prime ({}_{n}[\nabla]_{m+2}) \circ E_d^\prime ({}_{n+1}[\tau]_{m+1})   (0,e_{i+2}) , \\
        =& E_d^\prime ({}_{n+1}[\nabla]_{m}) \circ E_d^\prime ({}_{n}[\nabla]_{m+2})   (0,e_{i+2}) 
        = E_d^\prime ({}_{n+1}[\nabla]_{m}) (0, e_{i+1}) 
        = (0,e_i) .
    \end{align*}
    which is equal to $E_d^\prime ({}_{n}[\Delta]_{m}) \circ E_d^\prime({}_{n}[\nabla]_{m}) (0,e_{i})$.
    By a similar argument for $i < n+1$, we obtain $$E_d^\prime ({}_{n+1}[\nabla]_{m}) \circ E_d^\prime ({}_{n}[\nabla]_{m+2}) \circ E_d^\prime ({}_{n+1}[\tau]_{m+1}) \circ E_d^\prime ({}_{n+2}[\Delta]_{m}) \circ E_d^\prime ({}_{n}[\Delta]_{m+1}) = E_d^\prime ({}_{n}[\Delta]_{m}) \circ E_d^\prime({}_{n}[\nabla]_{m}).$$

\vspace{2mm}
    We prove that $E_d^\prime$ respects the $\mathcal{R}$-relation of the second part of (\ref{202508111506k}), i.e. ${}_{n}[\Delta]_{m} \circ {}_{n}[\eta]_{m} \sim {}_{n}[\eta]_{m+1} \circ {}_{n}[\eta]_{m}$.
    By definition, we can see that 
    \begin{align*}
        E_d^\prime ( {}_{n}[\Delta]_{m} )\circ E_d^\prime ({}_{n}[\eta]_{m} )(0,e_i) = E_d^\prime ( {}_{n}[\Delta]_{m} ) (0,e_{j}) = (0,e_{j+1}) 
    \end{align*}
    where $j= i$ if $i \leq n$ and $j = i+1$ otherwise.
    On the other hand, we have
    \begin{align*}
        E_d^\prime ( {}_{n}[\eta]_{m+1} ) \circ E_d^\prime ( {}_{n}[\eta]_{m}) (0,e_i) = E_d^\prime ( {}_{n}[\eta]_{m+1} ) (0,e_j) = (0,e_{j+1}) .
    \end{align*}
    Hence, we obtain $E_d^\prime ( {}_{n}[\Delta]_{m} )\circ E_d^\prime ({}_{n}[\eta]_{m} ) = E_d^\prime ( {}_{n}[\eta]_{m+1} ) \circ E_d^\prime ( {}_{n}[\eta]_{m})$.

\vspace{2mm}
    We prove that $E_d^\prime$ respects the $\mathcal{R}$-relation of the antipode axiom, i.e. ${}_{n}[\nabla]_{m} \circ {}_{n}[S]_{m+1} \circ {}_{n}[\Delta]_{m} \sim {}_{n}[\eta]_{m} \circ {}_{n} [\epsilon]_{m} \sim {}_{n}[\nabla]_{m} \circ {}_{n+1}[S]_{m} \circ {}_{n}[\Delta]_{m}$.
    Here, we only consider the former case since the other is proved analogously.
    On the one hand, we have:
    \begin{align*}
        &E_d^\prime ({}_{n}[\nabla]_{m}) \circ E_d^\prime ({}_{n}[S]_{m+1})\circ E_d^\prime ({}_{n}[\Delta]_{m})(0,e_{n+1}) \\
        =& E_d^\prime ({}_{n}[\nabla]_{m}) \circ E_d^\prime ({}_{n}[S]_{m+1})\left( \sum^{d-1}_{k=1} \frac{1}{p} \binom{p^{l}}{k} e_{n+1}^{k} e_{n+2}^{d-k}, ~ e_{n+1}+e_{n+2}\right)\\
       =& E_d^\prime (\nabla \ast  \mathrm{id}_{n-1}) \left( \sum^{d-1}_{k=1} \frac{1}{p} \binom{p^{l}}{k} (-e_{n+1})^{k} e_{n+2}^{d-k}+ \frac{1+(-1)^{d}}{p}e_1^{d} , ~ -e_{n+1}+e_{n+2}\right)\\
       =&\left( ( \sum^{d-1}_{k=1} \frac{1}{p} \binom{p^{l}}{k}(-1)^k ) e_{n+1}^{d}+ \frac{1+(-1)^{d}}{p}e_{n+1}^{d} , ~ 0\right)
    \end{align*}
    where
    \begin{align*}
        \sum^{d-1}_{k=1} \frac{1}{p} \binom{p^{l}}{k}(-1)^k+\frac{1+(-1)^d}{p} = \sum^{d}_{k=0} \frac{1}{p} \binom{p^{l}}{k}(-1)^k =\frac{1}{p}(1-1)^{p^l} = 0 .
    \end{align*}
    On the other hand, we have $E_d({}_{n}[\eta]_{m}) \circ E_d({}_{n} [\epsilon]_{m})(0,e_{n+1})=(0,0)$.
    
    For $i>n+1$, we have:
    \begin{align*}
        &E_d^\prime ({}_{n}[\nabla]_{m}) \circ E_d^\prime ({}_{n}[S]_{m+1})\circ E_d^\prime ({}_{n}[\Delta]_{m})(0,e_i) 
        = E_d^\prime ({}_{n}[\nabla]_{m}) \circ E_d^\prime ({}_{n}[S]_{m+1})\left( 0, e_{i+1}\right)\\
       =& E_d^\prime ({}_{n}[\nabla]_{m}) \left( 0, e_{i+1}\right)
       =\left( 0, e_{i}\right)
    \end{align*}
    and $E_d({}_{n}[\eta]_{m}) \circ E_d({}_{n} [\epsilon]_{m}) (0,e_i)=(0,e_i)$.
    By a similar argument for $i < n+1$, one can conclude that $E_d^\prime ({}_{n}[\nabla]_{m}) \circ E_d^\prime ({}_{n}[S]_{m+1})\circ E_d^\prime ({}_{n}[\Delta]_{m}) =E_d({}_{n}[\eta]_{m}) \circ E_d({}_{n} [\epsilon]_{m})$.
\end{proof}

\begin{prop} \label{202508041518}
    For $d = p^{l}$, the extension in $\mathcal{F}(\mathsf{ab};\mathds{Z})$:
\begin{align*}
    0 \to S^{d} \to E_{d} \to I \to 0
\end{align*}
induced by the usual inclusion and projection, is a generator of   $\mathrm{Ext}^{1}_{\mathcal{F}(\mathsf{ab};\mathds{Z})} ( I , S^{d} ) \cong \mathds{Z}/ p$.
\end{prop}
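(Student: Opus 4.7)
The strategy is two-fold: first verify that $0 \to S^{d} \to E_{d} \to I \to 0$ is a short exact sequence in $\mathcal{F}(\mathsf{ab};\mathds{Z})$, then show that it does not admit a natural splitting. Since $\mathrm{Ext}^{1}_{\mathcal{F}(\mathsf{ab};\mathds{Z})}(I, S^{d}) \cong \mathds{Z}/p$ by Theorem \ref{202508111718k}, any non-split extension of $I$ by $S^{d}$ automatically represents a generator.

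Exactness will follow directly from the construction of $E_{d}$ in Proposition \ref{202508111122k}: for every morphism $g$, the matrix $E_{d}(g)$ is block upper-triangular with diagonal entries $(S^{d}(g), I(g))$, so the inclusion onto the first summand and the projection $\pi$ onto the second summand are natural transformations fitting into the desired short exact sequence.

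To establish that the extension is non-split, I would suppose for contradiction that a natural transformation $s : I \to E_{d}$ with $\pi \circ s = \mathrm{id}_{I}$ exists. Such an $s$ is determined by linear maps $\sigma_{n} : \mathds{Z}^{n} \to S^{d}(\mathds{Z}^{n})$ via $s(n)(x) = (\sigma_{n}(x), x)$. The first step is to use naturality with respect to the morphisms $\iota_{i} : \mathds{Z} \to \mathds{Z}^{n}$ sending the generator to $e_{i}$. These can be written as compositions of symmetries and instances of $\eta$, on each of which $D_{d}$ vanishes; hence $E_{d}(\iota_{i})$ is block diagonal, and naturality forces $\sigma_{n}(e_{i}) = a \cdot e_{i}^{d}$ for a single integer $a$ independent of $i$ and $n$.

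The crux of the argument, and the only substantive computation, is then naturality with respect to the comultiplication $\Delta : \mathds{Z} \to \mathds{Z}^{2}$. Unpacking $s(2) \circ I(\Delta) = E_{d}(\Delta) \circ s(1)$ evaluated on the generator of $\mathds{Z}$, using $S^{d}(\Delta)(e_{1}^{d}) = (e_{1}+e_{2})^{d}$ and the explicit formula for $D_{d}(\Delta)$, the $S^{d}$-component yields the identity
\begin{align*}
a \binom{d}{k} = -\tfrac{1}{p} \binom{p^{l}}{k}, \quad 1 \leq k \leq d-1 .
\end{align*}
Taking $k = 1$ and using $d = p^{l}$ gives $a = -1/p \notin \mathds{Z}$, the desired contradiction. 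The only point that requires some care is ensuring that my identification of $\sigma_{n}$ on the basis is complete and consistent; this I expect to follow routinely from the monoidality relation in Pirashvili's presentation together with the vanishing of $D_{d}$ on the generators $\tau, \nabla, \eta, \epsilon$.
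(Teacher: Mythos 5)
Your proposal is correct and takes essentially the same approach as the paper: assume a section, observe that its rank-one component must be $(a\,e_1^d,\,e_1)$ for some $a\in\mathds{Z}$, and use naturality with respect to the diagonal together with the explicit formula for $D_d(\Delta)$ to get $a\binom{d}{k}=-\tfrac{1}{p}\binom{p^{l}}{k}$, hence $a=-1/p\notin\mathds{Z}$, with non-splitness implying generation since the Ext-group is $\mathds{Z}/p$. The only cosmetic difference is that the paper kills the unknown part of $s_2$ by working in $\mathds{Z}\mathsf{ab}$ with the reduced diagonal $\bar{\Delta}=\Delta-\mathrm{id}_1\oplus\eta-\eta\oplus\mathrm{id}_1$, whereas you pin down $s_2$ on basis vectors via naturality with respect to the coordinate inclusions (on which $D_d$ vanishes).
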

\begin{proof}
Suppose that the epimorphism $E_{d} \to I$ has a section $s : I \to E_{d}$ in $\mathcal{F} ( \mathsf{ab};\mathds{Z})$.
Recall that the category $\mathcal{F}(\mathsf{ab};\mathds{Z})$ is equivalent to the category of additive functors on $\mathds{Z} \mathsf{ab}$.
Then, for $\bar{\Delta} {:=} \Delta - \mathrm{id}_{1} \oplus \eta -  \eta \oplus \mathrm{id}_{1} \in \mathds{Z} \mathsf{ab} (1,2)$, the following diagram commutes:
$$
\begin{tikzcd}
    \mathds{Z} \ar[d, "I(\bar{\Delta})"] \ar[r, "s_1"] & E_{d} (1) \ar[d, "E_{d} (\bar{\Delta})"] \\
    \mathds{Z}^{\oplus 2} \ar[r, "s_2"] & E_{d} (2)
\end{tikzcd}
$$
Since $I( \Delta) (e_1) = e_1+e_2= I (\mathrm{id}_{1} \oplus \eta) (e_1) + I ( \eta \oplus \mathrm{id}_{1}) (e_1)$, we have $I(\bar{\Delta}) (e_1) = 0$, in particular $(s_2 \circ I (\bar{\Delta})) (e_1) = 0$.
By the commutativity of the above diagram, we obtain 
\begin{align} \label{202507211202}
    (E_{d}( \bar{\Delta}) \circ s_1) (e_1) = 0.
\end{align}
Since $s_1$ is assumed to be a section of $E_{d}(1) = S^d (1) \oplus I(1) \to I(1)$, there exists $\lambda \in \mathds{Z}$ such that $s_1 ( e_1) = (\lambda e_1^{d}, e_1)$.
By definition of $E_{d}$, we have
\begin{align*}
    E_{d}(\bar{\Delta}) (\lambda e_1^{d}, e_1) &= ( \lambda \{ (e_1 + e_2)^{d} - e_1^{d} -e_2^{d} \} + \sum^{d-1}_{k=1} \frac{1}{p} \binom{p^{l}}{k} e_{1}^{k} e_{2}^{d-k}, 0) , \\
    &= ( \sum^{d-1}_{k=1} \left( \lambda \binom{p^l}{k} +  \frac{1}{p} \binom{p^{l}}{k} \right) e_{1}^{k} e_{2}^{d-k}, 0) .
\end{align*}
By (\ref{202507211202}), we obtain $ \frac{1}{p} \binom{p^{l}}{k} = - \lambda \binom{p^l}{k}$ for $0<k< d= p^{l}$.
It leads to $\lambda = -\frac{1}{p}$ which contradicts with the condition $\lambda \in \mathds{Z}$.

\end{proof}

\begin{remark} \label{202508281012k}
    Let $n \in \nat^*$.
    The extension in Proposition \ref{202508041518} induces an extension of $\mathrm{GL}_{n}(\mathds{Z})$-modules where $\mathrm{GL}_{n}(\mathds{Z})$ is the general linear group over integers:
    \begin{align*}
        0 \to S^{d} (n) \to E_{d} (n) \to I (n) \to 0
    \end{align*}
    It is notable that, for $d= 2^l$, i.e. $p= 2$, this extension does not split.
    Suppose that $E_d(n) \to I(n)$ has a section $s: I(n) \to E_d(n)$  preserving $\mathrm{GL}_{n}(\mathds{Z})$-actions.
    For $e_n \in I(n)$, we have $s(e_n)=(\sum^{d}_{k=0} v_k e_n^{k} , e_n) \in E_d(n)$ for some $v_k \in S^{d-k}(n-1)$.
    We consider $\mathrm{id}_{n-1} \oplus S \in \mathrm{GL}_{n}(\mathds{Z}) \subset \mathsf{ab}(n,n)$. Since $s$ preserves $\mathrm{GL}_{n}(\mathds{Z})$-action we have :
    $E_d(\mathrm{id}_{n-1} \oplus S)s(e_n)=s(I(\mathrm{id}_{n-1} \oplus S)(e_n)).$
    Hence
    \begin{align*}
        ( \sum^{d}_{k=0} v_k (-e_n)^{k} + e_n^{d}, -e_n) = ( - \sum^{d}_{k=0} v_k e_n^{k} , -e_n) .
    \end{align*}
    By comparing the coefficients of $e_n^{d}$, we obtain $(-1)^d v_d + 1 = - v_d$, equivalently, $-2v_d = 1$, which contradicts with the assumption $v_d \in S^{d-d}(n-1) = \mathds{Z}$.
\end{remark}

\section{On $\mathrm{Ext}^{*}_{\mathcal{F}(\mathsf{gr};\mathds{Z})} (T^c \circ \mathfrak{a}, \Lambda^d \circ \mathfrak{a})$} \label{202508151448}
Let $\Lambda^d: \mathsf{ab} \to \mathsf{Ab}$ be the $d$th exterior power functor. The aim of this appendix is to explain how to recover \cite[Corollary 11.4 (2)]{arone2025polynomial} combining the projective resolution (\ref{202507301119}) and the method used in Section \ref{202508081128}. 
Let $\textrm{Surj}(d,c)$ be the set of surjections from $\{1, \ldots, d\}$ to $\{1, \ldots, c\}$ and $R(d,c){:=}{\  }_{S_d \backslash } \textrm{Surj}(d,c)$ the ordered partitions of $d$ into $c$ parts. We have the following:
\begin{prop}  \label{202508281110}

For $(c,d) \in (\nat^*)^2$, we have:
\begin{align*}
        \mathrm{Ext}^{i}_{\mathcal{F}(\mathsf{gr};\mathds{Z})} ( T^c \circ \mathfrak{a}, \Lambda^d \circ \mathfrak{a}) =
        \begin{cases}
            \mathds{Z}[R(d,c)]& i =d-c \\
            0 & \mathrm{otherwise} .
        \end{cases}
    \end{align*}
\end{prop}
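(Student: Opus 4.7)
The plan is to follow the strategy of Section \ref{202508081128} but with $\Lambda^d$ in place of $S^d$, using that $\Lambda^\bullet$ is also an exponential functor. First I would apply the sum-diagonal adjunction to obtain
\[
\mathrm{Ext}^*_{\mathcal{F}(\mathsf{gr};\mathds{Z})}(T^c \circ \mathfrak{a}, \Lambda^d \circ \mathfrak{a}) \cong \mathrm{Ext}^*_{\mathcal{F}(\mathsf{gr}^{\times c};\mathds{Z})}(\mathfrak{a}^{\boxtimes c}, \Lambda^d \circ \mathfrak{a} \circ \pi_c),
\]
and then expand $\pi_c^\ast(\Lambda^d \circ \mathfrak{a}) \cong \bigoplus_{i_1+\cdots+i_c=d} \Lambda^{i_1}\circ\mathfrak{a} \boxtimes \cdots \boxtimes \Lambda^{i_c}\circ\mathfrak{a}$ via the exponential property of $\Lambda^\bullet$.

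Next, I would compute the building block $\mathrm{Ext}^*_{\mathcal{F}(\mathsf{gr};\mathds{Z})}(\mathfrak{a}, \Lambda^i \circ \mathfrak{a})$ for $i \geq 0$ directly from the bounded complex (\ref{202507291740}). The key observation is that the exponential decomposition $\Lambda^i(V_1 \oplus \cdots \oplus V_n) \cong \bigoplus_{j_1+\cdots+j_n=i} \Lambda^{j_1}V_1 \otimes \cdots \otimes \Lambda^{j_n}V_n$, combined with $\Lambda^j(\mathds{Z}) = 0$ for $j \geq 2$, forces the cross-effect $\textrm{cr}_n(\Lambda^i \circ \mathfrak{a})(\mathds{Z},\ldots,\mathds{Z})$ to vanish unless $n = i$, in which case it equals $\mathds{Z}$. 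Hence the complex (\ref{202507291740}) is concentrated in a single degree and all its differentials are automatically zero, giving
\[
\mathrm{Ext}^j_{\mathcal{F}(\mathsf{gr};\mathds{Z})}(\mathfrak{a}, \Lambda^i \circ \mathfrak{a}) \cong
\begin{cases}
\mathds{Z} & \text{if } i \geq 1 \text{ and } j = i-1, \\
0 & \text{otherwise}.
\end{cases}
\]
In particular, the case $i=0$ contributes nothing, since $\Lambda^0 \circ \mathfrak{a}$ is the constant functor $\mathds{Z}$, whose cross-effects and reduced part vanish.

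Finally, I would iterate the K\"unneth formula of Proposition \ref{202508211555} (by induction on $c$) to decompose the Ext groups on $\mathsf{gr}^{\times c}$ into tensor products of the single-variable Exts computed above. Because each factor $\mathrm{Ext}^*_{\mathcal{F}(\mathsf{gr};\mathds{Z})}(\mathfrak{a}, \Lambda^{i_k} \circ \mathfrak{a})$ is either $\mathds{Z}$ (in one degree) or $0$, it is a free abelian group, so every $\mathrm{Tor}^{\mathds{Z}}_1$ term in the K\"unneth sequence vanishes and the sequence collapses. Only tuples $(i_1,\ldots,i_c)$ with each $i_k \geq 1$ contribute---exactly the ordered partitions parametrized by $R(d,c)$---and each contributes a single $\mathds{Z}$ in total degree $\sum_k (i_k - 1) = d-c$. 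Summing gives $\mathds{Z}[R(d,c)]$ in degree $d-c$ and zero otherwise. The main obstacle is the bookkeeping for iterating K\"unneth $c$ times, but flatness and single-degree concentration of the factors make the iteration entirely routine.
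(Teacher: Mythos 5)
Your proposal is correct and follows essentially the same route as the paper: computing $\mathrm{Ext}^*_{\mathcal{F}(\mathsf{gr};\mathds{Z})}(\mathfrak{a},\Lambda^i\circ\mathfrak{a})$ from the concentration of the cross-effect complex (\ref{202507291740}) in a single degree, then combining the sum-diagonal adjunction, the exponential property of $\Lambda^\bullet\circ\mathfrak{a}$, and a K\"unneth argument whose $\mathrm{Tor}$ terms vanish by freeness. Your explicit treatment of the $i_k=0$ factors and of the iteration over $c$ is just a more detailed write-up of what the paper does implicitly.
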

\begin{proof}
    We start with calculating $ \mathrm{Ext}^{i}_{\mathcal{F}(\mathsf{gr};\mathds{Z})} (  \mathfrak{a}, \Lambda^d \circ \mathfrak{a})$. By the complex (\ref{202507291740}), these Ext-groups are given by the cohomology of the complex:
    \begin{align*} 
  \overline{\Lambda^d}(\mathds{Z}) \xrightarrow{\delta^0} \text{cr}_2\Lambda^d(\mathds{Z}, \mathds{Z}) \xrightarrow{\delta^1} \ldots \rightarrow \text{cr}_d\Lambda^d(\mathds{Z},\ldots, \mathds{Z}) \xrightarrow{\delta^{d-1}} \text{cr}_{d+1}\Lambda^d(\mathds{Z},\ldots, \mathds{Z}) \rightarrow \dots  
\end{align*}
Since $\Lambda^d(\mathds{Z}^{\oplus d})=\mathds{Z}$ and $\Lambda^d(\mathds{Z}^{\oplus i})=0$ for $i < d$, we obtain, using (\ref{202508071131}), $\text{cr}_d\Lambda^d(\mathds{Z},\ldots, \mathds{Z})=\mathds{Z}$ and $\text{cr}_i\Lambda^d(\mathds{Z},\ldots, \mathds{Z})=0$ for $i \neq d$. Hence, we obtain:
\begin{align} \label{202508071509}
        \mathrm{Ext}^{i}_{\mathcal{F}(\mathsf{gr};\mathds{Z})} ( \mathfrak{a}, \Lambda^d \circ \mathfrak{a}) =
        \begin{cases}
            \mathds{Z}& i =d-1 \\
            0 & \mathrm{otherwise} .
        \end{cases}
    \end{align}
    Since the graded functor $\Lambda^{\bullet} \circ \mathfrak{a}=(\Lambda^{d} \circ \mathfrak{a})_{d\in \nat}$ is an exponential functor of $\mathcal{F}(\mathsf{gr};\mathds{Z})$, we have:
\begin{eqnarray*}
&\ &\mathrm{Ext}^{*}_{\mathcal{F}(\mathsf{gr};\mathds{Z})} ( T^c \circ \mathfrak{a}, \Lambda^{d} \circ \mathfrak{a})
\cong  \underset{i_1+ \ldots+i_c=d}{\bigoplus}\mathrm{Ext}^*_{\mathcal{F}(\mathsf{gr}\times \ldots \times \mathsf{gr};\mathds{Z})}\big( \mathfrak{a}^{\boxtimes c},  \Lambda^{i_1}\circ \mathfrak{a} \boxtimes \ldots \boxtimes \Lambda^{i_c}\circ \mathfrak{a} \big)
\end{eqnarray*}
Since $\mathrm{Ext}^{i}_{\mathcal{F}(\mathsf{gr};\mathds{Z})} ( \mathfrak{a}, \Lambda^d \circ \mathfrak{a})$ is torsion free, using a K\"unneth formula, (similar to Proposition \ref{202508211555} replacing $S^\bullet$ by $\Lambda^\bullet$), we obtain:
\begin{eqnarray*}
\mathrm{Ext}^{*}_{\mathcal{F}(\mathsf{gr};\mathds{Z})} ( T^c \circ \mathfrak{a}, \Lambda^{d} \circ \mathfrak{a})
\cong  \underset{i_1+ \ldots+i_c=d}{\bigoplus} \Big(\underset{1\leq k\leq c}{\bigotimes}\mathrm{Ext}^*_{\mathcal{F}(\mathsf{gr}\times \ldots \times \mathsf{gr};\mathds{Z})}\big( \mathfrak{a},  \Lambda^{i_k}\circ \mathfrak{a}  \big)\Big)\cong  \begin{cases}
            \mathds{Z}[R(d,c)]& i =d-c \\
            0 & \mathrm{otherwise} .
        \end{cases}\\
\end{eqnarray*}
where the last isomorphism is obtained using (\ref{202508071509}).
\end{proof}

\vspace{.5cm}
\textbf{Acknowledgement:} 
The authors would like to express their sincere gratitude to Gregory Arone for providing them useful comments and suggestions.

The first author is supported by a KIAS Individual Grant MG093702 at Korea Institute for Advanced Study.
In particular, this paper was completed during his stay in Aix-Marseille University, with support from KIAS. He is grateful to the Institut de Mathématique de Marseille for their hospitality during this visit.

The second author  acknowledges support from project ANR-22-CE40-0008 SHoCoS.

\bibliography{reference}{}
\bibliographystyle{plain}

\end{document}